\renewcommand*\env@matrix[1][*\c@MaxMatrixCols c]{%
  \hskip -\arraycolsep
  \let\@ifnextchar\new@ifnextchar
  \array{#1}}
\def\dispace{\setlength{\itemsep}{2pt}}
\newcommand{\longright}[1]{\;{\count255=0 \loop \relbar\mathrel{\mkern-6mu}%
    \advance\count255 by1\ifnum\count255<#1\repeat\rightarrow}\;}
\newcommand{\Right}[2]{\overset{#2}{\longright{#1}}}
\newcommand\isoto{\xrightarrow{
   \,\smash{\raisebox{-0.45ex}{\ensuremath{\scriptstyle\sim}}}\,}}
\newcommand{\Isoto}{\Right{1}{\,\smash{\raisebox{-0.45ex}{\ensuremath{\scriptstyle\sim}}}\,}}
\newcommand{\etype}[1]{\renewcommand{\labelenumi}{(#1{enumi})}}
\def\eroman{\etype{\roman} \dispace}
\def\ealph{\etype{\alph} \dispace}
\def\pSkip{\vskip 1.5mm \noindent}
\newcommand{\ds}[1]{\, {#1} \, }
\newcommand{\dss}[1]{\quad {#1} \quad }
\def\sm{\setminus}
\def\00{ \{ 0 \}}
\def\To{\longrightarrow}
\def\veps{\varepsilon}
\def\vrp{\varphi}
\def\X1{X_1}
\def\Y1{Y_1}
\def\tlb{\tilde b}
\def\tlV{\widetilde V}
\def\tlpsi{\widetilde \psi}
\def\tlrho{\widetilde \rho}
\def\join{\mathrel{\mkern-12mu}}
\def\ont{{\operatorname{t}}}
\newcommand{\trn}[1]{\, {{}^\ont} \join {#1}}
\def\old{{\operatorname{old}}}
\def\new{{\operatorname{new}}}
\def\LIN{{\operatorname{LIN}}}
\def\ELIN{{\operatorname{ELIN}}}
\def\PD{{\operatorname{PD}}}
\def\EPD{{\operatorname{EPD}}}
\def\R{\mathbb R}
\def\BB{\mathbb B}
\def\htb{\hat b}
\def\htq{\hat q}
\def\al{\alpha}
\def\bt{\beta}
\def\gm{\gamma}
\def\sig{\sigma}
\newtheorem{thm}{Theorem} [section]
\newtheorem*{thm*}{Theorem}
\newtheorem{cor}[thm]{Corollary}
\newtheorem{lem}[thm]{Lemma}
\newtheorem{lemma}[thm]{Lemma}
\newtheorem{prop}[thm]{Proposition}
\newtheorem*{claim*} {Claim}
\newtheorem*{theorem4.5'} {Theorem 4.5$'$}
\newtheorem{acknowledgment*}[thm] {Acknowledgment}
\newtheorem{example}[thm]{Example}
\newtheorem{examp}[thm]{Example}
\newtheorem*{examp*}{Example}
 \newtheorem{remark}[thm]{Remark}
  \newtheorem{remarks}[thm]{Remarks}
 \newtheorem*{remark*}{Remark}
 \newtheorem{defn}[thm]{Definition}
\newtheorem{schol}[thm]{Scholium}
\newtheorem{notation}[thm]{Notation}
\newtheorem{problem}[thm]{Problem}
\newtheorem*{notation*} {Notation}
\newtheorem*{notations*} {Notations}
\theoremstyle{remark}
\newtheorem*{caution*} {Caution}
\newtheorem*{comment*} {Comment}
\newtheorem{comment}[thm] {Comment}
\newcommand{\thmref}[1]{Theorem~\ref{#1}}
\newcommand{\propref}[1]{Proposition~\ref{#1}}
\newcommand{\lemref}[1]{Lemma~\ref{#1}}
\newcommand{\corref}[1]{Corollary~\ref{#1}}
 \renewcommand{\sectionmark}[1]{}
\newcommand{\bfem}[1]{\textbf{#1}}
\newcommand{\lm}{\lambda}
 \newcommand{\id}{\operatorname{id}}
 \newcommand{\Hom}{{\operatorname{Hom}}}
\begin{document}

\title[Subordinate Quadratic Forms and Isometric Maps  Over Semirings]
{Subordinate Quadratic Forms \\[2mm] and Isometric Maps\\[2mm] Over Semirings}
\author[Z. Izhakian]{Zur Izhakian}

\author[M. Knebusch]{Manfred Knebusch}

%\thanks{The research of the first author was supported  by the
%Oberwolfach Leibniz Fellows Programme (OWLF), Mathematisches
%Forschungsinstitut Oberwolfach, Germany.}

%******************************* AMS classification ***********************
%\subjclass{??? Primary 12K10, 13B25; Secondary 51M20}
%\subjclass[2010]  {Primary: 13A18, 13F30, 16W60, 16Y60; Secondary:
%03G10, 06B23, 12K10,   14T05}

%******************************* AMS classification ***********************
\subjclass[2010]{Primary 15A03, 15A09, 15A15, 16Y60; Secondary
14T05, 15A33, 20M18, 51M20.}

%******************************* date *************************************
%\date{\today}
%******************************* keywords *********************************

\keywords{Tropical algebra, supertropical modules, bilinear forms,
quadratic forms,  quadratic pairs, isometric maps, subordinated quadratic forms, scalar extension.}

%******************************* file name *********************************

%\thanks{\noindent \underline{\hskip 3cm } \\ File name: \jobname}

%******************************* abstract *********************************

\begin{abstract}
The paper expands the theory of quadratic forms on modules over a semiring~ $R$, introduced in \cite{QF1}--\cite{VR1}, especially in the setup of tropical and supertropical algebra. Isometric linear maps induce subordination on quadratic forms, and provide a main tool in our current study. These maps allow  lifts and pushdowns of quadratic forms on different modules, preserving basic  characteristic properties.
\end{abstract}

\maketitle

\tableofcontents

\numberwithin{equation}{section}

\section*{Introduction}

The theory of quadratic forms over rings and fields has been developing consistently along the past century as an important area of study in classical mathematics. The theory of quadratic forms over semirings is still in its nascent stage, developed in the past decade, and has been motivated by two interacting areas of study: tropical mathematics and real algebra. Semirings arise in several mathematical contexts, in particular in valuation theory.

The paper continues the development of a theory of quadratic forms on modules over a semiring $R$, initiated jointly with Louis Rowen in \cite{QF1}--\cite{VR1}, in the case that $R$ is supertropical.
The basic setup of quadratic forms on an $R$-module $V$, where $R$ is any semiring, has already been given in  \cite{QF1}; \S\ref{sec:1} recalls and complements this setup.

We now give  a brief overview of the supertropical algebra leading to this type of quadratic  form theory,  and then outline  the content of the paper.

  A (commutative) \bfem{semiring} is a set~$R = (R,+, \cdot \; )$ equipped with addition and multiplication,
  such that both $(R,+,0)$ and $(R,\cdot \;,1)$ are abelian monoids
 % \footnote{A monoid means a semigroup that has a neutral element.
with elements $0=0_R$ and $1=1_R$ respectively, and multiplication
distributes over addition in the usual way.
  % We always assume that $R$ is a commutative semiring with $1.$
In other words, $R$ satisfies all the properties of a commutative
ring except the existence of negation under addition. $R$ is a  \textbf{bipotent semiring},  if $a+b \in \{ a, b\} $ for all $a,b \in R$. A semiring
 $R$ is a \bfem{semifield}, if every nonzero element of $R$ is invertible;
  i.e., $R\setminus\{0\}$ is an abelian group.
Main examples of bipotent semifields are the max-plus algebra $(\R \cup \{-\infty\}, \max, + ) $ and the boolean algebra $(\BB, \vee, \wedge)$.
Important (non-tropical) semirings,  where our theory is also applicable, include structures in real algebra, such as the positive cone of an ordered field
\cite[p.~18]{BCR} or a partially ordered commutative ring
\cite[p.~32]{Br}.
%A further application can be found in the algebra of groups over a splitting field, as described briefly at the end of this overview.

%A semiring $R$ is called a \bfem{semifield}, if every nonzero element of $R$ is invertible;
 % hence $R\setminus\{0\}$ is an abelian group.

Supertropical semrings have a richer algebraic structure \cite{IKR1,IKR2,IzhakianRowen2007SuperTropical}.
A semiring $R$ is a
\textbf{supertropical semiring}, if the following  hold:
\begin{itemize} \dispace
\item $e:= 1_R + 1_R$ is idempotent (i.e., $e + e = E$),
\item the \textbf{ghost ideal} $ e R$ is a bipotent
semiring (and thus totaly ordered), \item  addition is induced by  the ordering of $eR$ and the  \textbf{ghost
map} $a \mapsto ea$:
\begin{equation}\label{eq:0.6}
   a + b = \left\{
             \begin{array}{ll}
               a  & \hbox{if } ea < eb;\\
               b & \hbox{if } eb < ea;\\
               ea  & \hbox{if } ea = eb.
             \end{array}
           \right.
\end{equation}
\end{itemize}
%In particular $ea=0$ implies $a = 0$ (take $b= 0$ in \eqref{eq:0.6}).
The elements of $eR$ are called \textbf{ghost
elements} and those of $R \sm eR$ are called \textbf{tangible
elements}. The zero element is regarded both as tangible and ghost.

Semirings appear as the targets of non-archimedean valuations $v: \mathbb K \to R$, applied to the field $\mathbb K$  of
\emph{Puiseux series} over  an   algebraically closed field  $F$
of characteristic~$0$. Such a target  can be viewed as a ``max-plus'' semiring, whose operations are ``$+$'' for
multiplication and ``$\sup$'' for addition.  These valuations, called
\textbf{tropicalization}, are described in \cite{IMS}.
The merit of tropicalization is the translation of algebraic and geometric objects into combinatorial entities, which makes their analysis simpler, for example in counting problems in enumerative geometry.
Supertropicalization generalizes tropicalization by taking $R$ to be a supertropical semiring.

Tropicalization applies naturally to basic structures in linear algebra (such as polynomials, matrices, and quadratic forms) simply by replacing the classical addition and
multiplication by maximum and plus respectively. However,  since addition (max) in semirings
does not have negatives, classical theory does not always work well, and one has to take a different route. Supertropicalization proposes a modification of tropicalization nearer to classical arguments.

Bilinear and quadratic forms over semirings, defined in the familiar way (cf. \S\ref{sec:1}), are one example where such discrepancy arises \cite{IKR-LinAlg}.
As in the classical theory,
 these structures are defined on (semi)modules over
  a  semiring ~$R$,  more specifically over  a semifield, when one wants to encode valuable
  ``trigonometric'' information~ \cite{QF2}.

%Then, despite the nice property, that free modules have
%``unique base'' \cite[Definition~ 1.2]{QFSym}, a modification of the notion of the underlying bilinear form of a quadratic form is needed.

 Recall from \cite[\S1-\S4]{QF1} that a \bfem{module} $V$ over $R$
(also called a \bfem{semimodule}) is an abelian monoid
$(V,+,0_V)$ equipped with a scalar multiplication $R\times V\to V,$
$(a,v)\mapsto av,$ where the same customary axioms of modules over a ring hold: $a_1(bv)=(a_1 b)v,$
$a_1(v+w)=a_1v+a_1w,$ $(a_1+a_2)v=a_1v+a_2v,$ $1_R\cdot v=v,$ and
$0_R\cdot v=0_V=a_1\cdot 0_V$ for all $a_1,a_2,b\in R,$ $v,w\in V.$
We write~$0$ for both $0_V$ and~$0_R$, and 1 for $1_R.$

There are several
notions  of ``basis'' for modules over semifields, since dependence and spanning do not coincide \cite{IKR-LinAlg}. In this paper the
standard categorical notion is used, i.e., an $R$-module $V$ is
\bfem{free}, if it  contains a family $(\veps_i \ds|i\in I)$
such that every $x\in V$ has a unique presentation
$x=\sum\limits_{i\in I} x_i\veps_i$ with scalars $x_i\in R$ and only
finitely many $x_i $ are nonzero.   Then, the family   $(\veps_i \ds |i\in I)$  is called a
\bfem{basis} of $V$. Clearly, any free module with a basis of
$n$ elements is  isomorphic to~$R^{n}$ under the map
$\sum\limits_{i=1}^n x_i\veps_i\mapsto (x_1, \dots, x_n).$

Supertropical semirings establish  a class of semirings over which every free module has a
unique basis (up to multiplication by scalar units), while supervaluations allow to pass from a classical quadratic form on a free module over a
 (commutative) ring  to  quadratic forms on free modules over a supertropical
semiring. In this way,  quadratic forms over semirings provide a new tool to explore families of classical quadratic forms in combinatorial manner.
The present paper develops further the theory of quadratic forms over semirings, enhancing  this tool,  where supertropical semirings serve as main examples.

At first glance, the theory of supertropical quadratic forms on $R$-module $V$ over a supertropical semiring $R$, as described in \cite{QF1}, looks rather different from the classical quadratic forms theory on modules over a ring.
But it seems important  (in particular for a better understanding of the supertropicalization processes on classical quadratic forms) to bridge this gap by establishing a general theory of quadratic forms over semirings.
\{Already supertropical algebra and geometry in general had been driven by the  desire to bring tropical mathematics nearer to the classical arguments.\}

This paper presents one facet of such a theory relying on the study
of \textbf{isometric $R$-linear maps} $\rho: V \to V'$ for given quadratic forms $q: V \to R$ and $q' : V' \to R$ on $R$-modules~ $V$ and $V'$. If, say, $R$ is a field, in good cases (e.g., when $q$ is anisotropic on $\rho(V)$) we have a sort of orthogonal component to $\rho(V)$ in $V'$ (cf. \cite[\S7]{EKM}) and \cite{Spez} for the case that $R$ is of characteristic $2$). But for a semifield $R$,  and all the more for  a semiring $R$, one needs a broader context. We admit modules $V, V'$ over different semirings $R, R'$, connected by a linear map $\lm: R \to R'$, and consider \textbf{$\lm$-isometries}
(Definition \ref{defn:5.1}).

We say that a quadratic form  $q$ is \textbf{subordinate} to a quadratic form  $q'$ with respect to $\lm$  (written  $q\prec _\lm q'$), if there exists a $\lm$-isometry $\rho$ from $(V,q)$ onto the submodule
$(\rho(V), q'|\rho(V))$ of $(V',q')$ (cf. Definition \ref{defn:5.7}). A~ central problem in the paper is to determine for given quadratic forms $q$ and $q'$, whether~ $q$ is subordinate to $q'$ or not.

We further address the  problem for a given map $\lm:R \to R'$ as above, in order to \textbf{lift} a quadratic form $q': V' \to R'$ to a quadratic form $q : V \to R$ by using
a $\lm$-isometry $\rho:V \to V'$ in a natural way, and conversely to \textbf{pushdown} a quadratic form $q': V' \to R'$ by a surjective $\lm$-isometry $\rho: (V',q') \to (V,q)$, as much as possible, cf. \S\ref{sec:6}.

For these endeavours it is useful to complete a quadratic from $q:V \to R$ to a quadratic pair $(q,b)$ with a suitable bilinear companion $b: V \times V \to R$ of $q$ as introduced in \cite{QF1} and recalled in \S\ref{sec:1}. Most often we need the property that the pair $(q,b)$ is balanced, i.e., $q(x) = 2 b(x,x)$ for any $x \in V$,  and so we study also lifts and pushdowns of balanced quadratic pairs.

Further we need an account of \textbf{scalar extension} of an $R$-module $V$ by a semiring homomorphism $\lm: R' \to R$. For rings $R, R'$ the scalar extension of $V$ is provided by the tensor product $V \otimes_R R'$. But, since  tensor products for modules over semirings are a delicate matter, we prefer another method to obtain scalar extensions of $R$-modules.
This method is not constructive, cf. \S\ref{sec:4}, but nevertheless  suffices to establish  scalar extensions of bilinear forms and quadratic pairs with good categorical properties.

\section{Balanced companions and expansions
   of quadratic forms}\label{sec:1}

In what follows, $R$ is a semiring and $V$ is an $R$-module.
We recall, elaborate, and complement basic definitions and facts from \cite{QF1}
about quadratic forms on $V$, mainly \S1
therein.

A map $q: V\to R$ is a \bfem{quadratic form} if
\begin{equation}\label{eq:1.1}
q(cx)= c^2q(x)
\end{equation}
for all $x\in V,$ $c\in R,$ and there exists a symmetric
$R$-bilinear form $b: V\times V\to R,$ called a \bfem{companion}
of $q$, such that
\begin{equation}\label{eq:1.2}
q(x+y)=q(x)+q(y)+b(x,y)
\end{equation}
for all $x,y\in V.$ We then say that $(q,b)$ is a
\bfem{quadratic pair} on $V.$ It follows from \eqref{eq:1.1} and~
\eqref{eq:1.2} that for all $x\in V$
$$4q(x)=2q(x)+b(x,x).$$
We call the companion $b$ of $q$ \bfem{balanced}, if
\begin{equation}\label{eq:1.3}
 b(x,x)=2q(x),
\end{equation}
and  say that the quadratic pair $(q,b)$ is balanced.

If $B:V\times V\to R$ is any $R$-bilinear form, then it is
immediate that
\begin{equation}\label{eq:1.4}
q(x):=B(x,x)\qquad (x\in V)
\end{equation}
is a quadratic form on $V,$ and the symmetrized bilinear form
$b:=B+B^\ont,$ i.e.,
\begin{equation}\label{eq:1.5}
b(x,y)=B(x,y) + B(y,x)
\end{equation}
is a balanced companion of $q$.
We then say that  $B$ is an
\bfem{expansion} of the  quadratic form~ $q$, or the
quadratic pair $(q,b)$, and use the (new)
notation
% (not yet appearing in \cite{QF1})
\begin{equation}\label{eq:1.6}
q=[B].
\end{equation}

Given a system of generators $(\veps_i \ds|i\in I)$ of the
$R$-module $V$, we underpin the above definitions by explicit
formulas. For notational convenience, we choose a total ordering
of the index set~ $I$.

Let $q: V\to R$ be a quadratic form on $V$ and let $b: V\times V\to R$ be
a companion of $q.$ We write
\begin{equation}\label{eq:1.7}
\al_i:=q(\veps_i),\qquad\bt_{ij}:=b(\veps_i,\veps_j),
\end{equation}
for $i,j\in I.$ For $x,y\in V$  we choose presentations
\begin{equation}\label{eq:1.8}
x=\sum_{i\in I}x_i\veps_i,\qquad y=\sum_{i\in I} y_i\veps_i,
\end{equation}
with $x_i,y_i\in R;$ $x_i=0,$ $y_i=0$ for   almost all $i\in I.$
Then, if $I$ is finite,
\begin{equation}\label{eq:1.9}
q(x)=\sum_{i\in I}\al_ix_i^2+\sum_{i<j}\bt_{ij}x_ix_j,
\end{equation}
  by an easy induction on the cardinality $|I|$ of $I$, which thus  holds  also for
infinite ~ $I$. Clearly
\begin{equation}\label{eq:1.10}
b(x,y)=\sum_{i,j\in I}\bt_{ij}x_iy_j=\sum_{i\in
I}\bt_{ii}x_iy_i+\sum_{i<j}\bt_{ij}(x_iy_j+x_jy_i).
\end{equation}

If $b$ is balanced, then $b(\veps_i,\veps_i)=2q(\veps_i),$ i.e.,
for all $i\in I,$
\begin{equation}\label{eq:1.11}
\bt_{ii}=2\al_i.
\end{equation}
Conversely, when \eqref{eq:1.11} holds, it follows from
\eqref{eq:1.9} and \eqref{eq:1.10} that $b(x,x)=2q(x)$ for all~
$x\in  V.$ \textit{So $b$ is balancd iff
\eqref{eq:1.11} holds.}

Assume further  that $B:V\times V\to R$ is an expansion of $(q,b)$,
(in particular $(q,b)$ is balanced), and let
\begin{equation}\label{eq:1.12}
\gm_{ij}=B(\veps_i,\veps_j)\qquad (i,j\in I).
\end{equation}
Then clearly, for all $i\in I,$
\begin{equation}\label{eq:1.13}
\gm_{ii}=\al_i,
\end{equation}
and for $i\ne j$
\begin{equation}\label{eq:1.14}
\gm_{ij}+\gm_{ji}=\bt_{ij},
\end{equation}
and, of course,
\begin{equation}\label{eq:1.15}
B(x,y)=\sum_{i,j\in I}\gm_{ij}x_iy_j.
\end{equation}

These formulas tell us very little about the \textit{existence} of
balanced companions and/or expansions of a given quadratic form,
since $(\veps_i \ds |i\in I)$ is only a system of generators of
$V,$ and we cannot extract $q,b,B$ from pregiven coefficients
$\al_i,$ $\bt_{ij},$ $\gm_{ij}. $ This changes if $V$ is a free
$R$-module.

\begin{prop}\label{prop:1.1}Assume that $V$ is a free $R$-module with a totally ordered basis $(\veps_i \ds |i\in I)$.   Given a quadratic form $q: V \to R $,  the extensions $B:V\times V\to R$ of $q$ are all obtained as follows. Choose a companion\footnote{See \cite[\S6]{QF1} for the possible choices.} $b$ of $q$,  write   $\al_i:=q(\veps_i)$, $\bt_{ij}:=b(\veps_i,\veps_j)$ for $(i,j\in I),$ and define
$$B(x,y):=\sum_{i,j\in I}\gm_{ij}x_ix_j$$
with coefficients $\gm_{ii}:=2\al_i$ and $\gm_{ij},$ $\gm_{ji}$
for $i<j$ chosen in  a way that $\gm_{ij}+\gm_{ji}=\bt_{ij}.$
The associated balanced companion $\tlb=B+B^\ont$ of $q$ is given
by the formula
\begin{equation}\label{eq:1.16}
\tlb(x,y)=\sum_{i\in
I}(2\al_i)x_iy_i+\sum_{i<j}\bt_{ij}(x_iy_j+x_jy_i),
\end{equation}
and these symmetric bilinear forms $\tlb(x,y)$ are all balanced
companions of $q.$
\end{prop}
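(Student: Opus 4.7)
The plan is to prove two implications, exploiting the freeness of $V$. First, I would show that every expansion $B$ of $q$ arises from the stated construction; second, that any valid choice of coefficients $\gm_{ij}$ yields such an expansion.

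For the forward direction, suppose $B : V \times V \to R$ is an expansion of $q$, so that $q(x) = B(x,x)$ for every $x \in V$. Setting $\gm_{ij} := B(\veps_i, \veps_j)$, bilinearity combined with the unique coordinate presentation of $x,y$ yields formula (1.15). Specializing to $x = \veps_i$, the relation $q(\veps_i) = B(\veps_i,\veps_i) = \gm_{ii}$ forces $\gm_{ii} = \al_i$. Taking $\tlb := B + B^\ont$, one has $\tlb(\veps_i, \veps_j) = \gm_{ij} + \gm_{ji}$, and since $\tlb$ must be a companion of $q$ with the same off-diagonal values as any companion of $q$, this sum is constrained to equal $\bt_{ij}$ for $i < j$.

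For the reverse direction, freeness is essential: the assignment $(\veps_i, \veps_j) \mapsto \gm_{ij}$ extends uniquely to an $R$-bilinear form $B : V \times V \to R$, given in coordinates by (1.15). To verify that $B$ is an expansion of $q$, compute
\[
B(x,x) = \sum_{i\in I} \gm_{ii}\, x_i^2 + \sum_{i<j}(\gm_{ij} + \gm_{ji})\, x_i x_j = \sum_{i \in I} \al_i\, x_i^2 + \sum_{i<j} \bt_{ij}\, x_i x_j,
\]
which coincides with $q(x)$ by formula (1.9). The closed-form expression (1.16) for the induced balanced companion then follows from (1.10) by inserting the diagonal value $\tlb(\veps_i,\veps_i) = 2\gm_{ii} = 2\al_i$ together with $\tlb(\veps_i, \veps_j) = \bt_{ij}$ for $i \neq j$; this also confirms that every such $\tlb$ is balanced, since \eqref{eq:1.11} holds by construction.

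The main obstacle is the reverse direction: producing a well-defined bilinear form from prescribed values on basis pairs. Over a general module, merely equipped with a system of generators, no such $B$ need exist — the proposed values could be inconsistent with hidden relations among the generators. Freeness removes this difficulty, after which the identification $q = [B]$ reduces cleanly to the coordinate formula (1.9), which was already established in \S\ref{sec:1} for an arbitrary totally ordered generating set.
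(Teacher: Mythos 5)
Your proof is correct and is precisely the ``easy verification based on the formulas (1.7)--(1.15)'' that the paper leaves to the reader, including your tacit correction of the statement's $\gm_{ii}:=2\al_i$ to $\gm_{ii}=\al_i$, as (1.13) requires. One small caution: over a semiring distinct companions of $q$ need not share off-diagonal values, so in your forward direction the relation $\gm_{ij}+\gm_{ji}=\bt_{ij}$ should be justified by taking the chosen companion $b$ to be $B+B^{\ont}$ itself (which is a companion by the general fact of \S\ref{sec:1}), not by appealing to agreement with ``any companion of $q$.''
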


\begin{proof}
A very easy verification based on the formulas
\eqref{eq:1.7}--\eqref{eq:1.15}.
\end{proof}

\begin{comment}\label{comment:1.2}
In practice one usually chooses $\gm_{ij}=\bt_{ij},$ $\gm_{ji}=0$
for $i<j,$ to obtain the unique ``triangular'' expansion
$B_\nabla$ of $(q,b),$ cf. \cite[\S1]{QF1},\footnote{Denoted in \cite{QF1}
by $\nabla_q.$}. But, we  want to be independent of the choice of
a total ordering of the basis $(\veps_i \ds |i\in I).$ We  use
an ordering as above only to ease notation.
\end{comment}

\begin{cor}\label{cor1.3}
As before assume that $V$ is free with basis $(\veps_i \ds |i\in
I).$ Assume also that $R$ is zero sum free. i.e., $R \sm \00$ is closed under addition.
Suppose  that $q:V\to R$ is a quadratic form and $b$ is a
companion of $q.$ The following are equivalent:
\begin{enumerate} \eroman \dispace
\item $q$ has a unique expansion; \item If $b$  is a
companion of $q$, then $b(\veps_i,\veps_j)=0$ for all $i\ne j$ in
$I.$
\end{enumerate}
When (i) and  (ii) hold, $q$ has a unique balanced companion $\tlb,$
given by
$$\tlb(\veps_i,\veps_j)=\begin{cases} 2q(\veps_i)&\quad\text{if $i=j$},\\
0&\quad \text{else}.\end{cases}$$
\end{cor}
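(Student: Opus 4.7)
My plan is to use Proposition~\ref{prop:1.1} as the main tool in both directions. That proposition tells us that once a companion $b$ of $q$ is fixed (with $\al_i=q(\veps_i)$ and $\bt_{ij}=b(\veps_i,\veps_j)$), the expansions $B$ of $q$ correspond exactly to choices of splittings $\bt_{ij}=\gm_{ij}+\gm_{ji}$ for pairs $i<j$, while the diagonal values are forced as $\gm_{ii}=\al_i$.

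For (i)$\Rightarrow$(ii), I will fix an arbitrary companion $b$ of $q$ and exhibit two extremal splittings: the ``upper triangular'' one with $\gm_{ij}=\bt_{ij}$, $\gm_{ji}=0$ and the ``lower triangular'' one with $\gm_{ij}=0$, $\gm_{ji}=\bt_{ij}$ (for each $i<j$). Both produce genuine expansions of $q$ by Proposition~\ref{prop:1.1}, so the uniqueness hypothesis (i) forces them to coincide as bilinear forms. Comparing their values on $(\veps_i,\veps_j)$ immediately yields $\bt_{ij}=0$; symmetry of $b$ then gives $b(\veps_i,\veps_j)=0$ for all $i\ne j$, as required. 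Notice that zero sum freeness is not needed in this direction.

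For (ii)$\Rightarrow$(i), I will start from an arbitrary expansion $B$ of $q$ and set $\gm_{ij}:=B(\veps_i,\veps_j)$. The symmetrized form $\tlb:=B+B^\ont$ is a balanced companion of $q$ by \eqref{eq:1.5}, hence a companion, so hypothesis (ii) applies and gives $\gm_{ij}+\gm_{ji}=\tlb(\veps_i,\veps_j)=0$ for every $i\ne j$. This is precisely where the zero sum free assumption on $R$ does the essential work: it forces both summands $\gm_{ij}$ and $\gm_{ji}$ to vanish individually. Combined with $\gm_{ii}=\al_i$, this pins $B$ down to $B(x,y)=\sum_{i\in I}\al_i x_iy_i$, proving that $q$ has a unique expansion.

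For the explicit formula, once (ii) is in force any balanced companion $\tlb'$ of $q$ must satisfy $\tlb'(\veps_i,\veps_j)=0$ for $i\ne j$ (by~(ii)) and $\tlb'(\veps_i,\veps_i)=2q(\veps_i)=2\al_i$ (by balancedness); bilinearity together with the basis property of $(\veps_i\mid i\in I)$ then extends these values uniquely, confirming both the stated formula for $\tlb$ and its uniqueness as a balanced companion of $q$ (this can equivalently be read off from \eqref{eq:1.16}). The only genuine obstacle in the argument is the implication $\gm_{ij}+\gm_{ji}=0\Rightarrow\gm_{ij}=\gm_{ji}=0$, which is exactly what the zero-sum-free assumption guarantees; without it, nontrivial cancellations between $\gm_{ij}$ and $\gm_{ji}$ could produce several distinct expansions of the same quadratic form $q$, so the hypothesis cannot be dropped.
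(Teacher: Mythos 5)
Your proof is correct and follows essentially the same route as the paper, which simply remarks that the corollary is ``a consequence of the description of all expansions of $q$ in Proposition~\ref{prop:1.1}''; you have merely supplied the details that the authors leave implicit (the two triangular splittings for (i)$\Rightarrow$(ii), and the zero-sum-free cancellation $\gm_{ij}+\gm_{ji}=0\Rightarrow\gm_{ij}=\gm_{ji}=0$ for (ii)$\Rightarrow$(i)). Your observation about exactly where zero-sum-freeness is needed is accurate and goes beyond what the paper records.
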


\begin{proof}
This is a consequence of the description of all expansions of $q$
in \propref{prop:1.1}.
\end{proof}

In the case of a finite  ordered basis $(\veps_i \ds |i\in I)$, $I=\{1,2,\dots,n\},$ we  use a more
elaborated  notation as follows. We denote the quadratic form
\eqref{eq:1.9} by the triangular scheme
\begin{equation}\label{eq:1.18}
q=\begin{bmatrix}
\al_1 & \bt_{12} &\hdots & \bt_{1n}\\
&\al_2\\
& & \ddots & \vdots\\
& & & \al_n
\end{bmatrix}
\end{equation}
as in \cite[\S1]{QF1}, and identify any bilinear form
$C:V\times V\to R$ with the $(n\times n)$-matrix
$(C(\veps_i,\veps_j))_{1\le i,j\le n}.$ With this notation, the
companion \eqref{eq:1.10} of $q$ reads
$$b=(\bt_{ij})_{1\le i,j\le n},$$
and the expansion \eqref{eq:1.15} of $q$ reads
$$B=(\gm_{ij})_{1\le i,j\le n}.$$
We have $$\gm_{ii}=\al_i, \qquad \gm_{ij}+\gm_{ji}=\bt_{ij},$$
whence the quadratic form $q(x)=B(x;x)$ reads (cf. \eqref{eq:1.6})
\begin{equation}\label{eq:1.19}
q=\begin{bmatrix} \gm_{11} & \hdots & \gm_{1n}\\
\vdots & \ddots & \vdots \\
\gm_{n1} & \cdots & \gm_{nn}\end{bmatrix}=\begin{bmatrix}
\al_1 & \gm_{12}+\gm_{21}, &\hdots & \gm_{1n}+\gm_{n1}\\
& \al_2 &  & \gm_{2n}+\gm_{n2}\\
& & \ddots &  \vdots \\
& & & \al_n\end{bmatrix}.
\end{equation}
The triangular expansion of $q$ (cf. Comment \ref{comment:1.2})
reads
\begin{equation}\label{eq:1.20}
B_\nabla=\begin{pmatrix}
\al_1 & \bt_{12} & \hdots & \bt_{1n}\\
0 & \al_{2} & \hdots & \bt_{2n}\\
\vdots  &  \ddots &  \ddots & \vdots \\
0 & \hdots & 0  &\al_n\\
 \end{pmatrix} .
\end{equation}
When $V$ is free with an infinite basis $(\veps_i \ds |i\in I)$, we
 use an analogous notation employing $I\times I$-matrices.

We include  specific notation for \textbf{diagonal forms}, i.e.,
polynomials \eqref{eq:1.9} with coefficients $\bt_{ij} = 0$. Given
$\al, \dots, \al_n \in R$, we abbreviate
\begin{equation}\label{eq:1.21}
[\al_1, \dots , \al_n ]= \begin{bmatrix}
\al_1 & & \hdots  & 0 \\
 & \al_2 &  & \vdots \\
& & \ddots & \\
& & & \al_n\end{bmatrix},
\end{equation}

\begin{equation}\label{eq:1.22}
\langle  \al_1, \dots , \al_n \rangle = \begin{pmatrix}
\al_1 & & \hdots  & 0 \\
 & \al_2 &  & \vdots \\
 \vdots & & \ddots & \\
 0 &  \hdots & & \al_n\end{pmatrix}.
\end{equation}

\begin{remark} If $q = [\al_1, \dots , \al_n ]$, then
\begin{align*} q(x+y) & = \sum_i\al_i(x_i + y_i)^2  \\ & =
\sum_i\al_i x_i ^2 +\sum_i\al_i y_i ^2 + \sum_i 2\al_i x_i y_i,
\end{align*}
and we conclude from above that $\langle  2\al_1, \dots , 2\al_n
\rangle $ is a balanced companion of $q$.
\end{remark}

\begin{defn}\label{defn:1.4} Let $V$ be an $R$-module, not necessarily free.  We say that a
quadratic form $q:V\to R$ is \bfem{expansive}, if there exists
some expansion $B:V\times V\to R$ of $q$.  We say that ~$q$ is
\bfem{balanceable}, if $q$ admits at least one balanced companion.
In the same vein, a balanced quadratic pair $(q,b)$ on
$V$ is called \bfem{expansive}, if $(q,b)$ has at least one expansion.
\end{defn}

\begin{defn}[{\cite[Definition~2.1]{QF1}}]\label{defn:1.4.a} A quadratic form $q: V \to R$ is called \textbf{quasilinear}, if
$q(x+y) = q(x) + q(y)$ for all $x,y \in V.$ In other terms, $q$ is
quasilinear iff $b= 0$ is a companion of~ $q$.
\end{defn}

Note that, when $q$ is quasilinear, the pair $(q,0)$ is not
balanced, except in the trivial case $q=0$.

\begin{examp}\label{examp:1.4}
If $(a+b)^2 = a^2 + b^2$ for all $a,b \in R$, then every diagonal
form \eqref{eq:1.21} over $R$ is quasilinear. In
particular, this happens  if the semiring $R$ is supertropical.
\end{examp}

We proceed with some examples of balanceable and/or
expansive quadratic forms. Note that, if $V$ is free, then
by \propref{prop:1.1} every quadratic form $q$ on $V$ is expansive
and (hence) balanceable.
We start with ``rigid'' forms, defined as follows.

\begin{defn}[{\cite[Definition~3.1]{QF1}}]\label{defn:1.5}
A quadratic form $q:V\to R$ is \bfem{rigid}, if it has only one
companion.
\end{defn}

\begin{examp}\label{examp:1.6}
Assume that $R$ is a subsemiring of a ring $R'.$ Let $V$ be any
$R$-module and let $q:V\to R$ a be  quadratic form. Then $q$ has exactly
one companion, namely,
$$b(x,y)=q(x+y)-q(x)-q(y),$$
computed in $R'.$ Clearly, $b(x,x)=2q(x).$ Thus $q$ is rigid, and
the unique companion $b$ is balanced.
\end{examp}

\begin{examp}[{cf. \cite[Proposition~3.4]{QF1}}]\label{examp:1.7}
Let $V$ be any $R$-module and let $(\veps_i \ds|i\in I)$ be a system of
generators of $V.$ Assume that $q:V\to R$ is a quadratic form with
$q(\veps_i)=0$ for all $i\in I.$ If $b$ is a companion of $q,$
then $q(\veps_i+\veps_j)=b(\veps_i,\veps_j)$ for all $i,j\in I$
and $b(\veps_i,\veps_i)=4q(\veps_i)=0=2q(\veps_i)$. We conclude
that $q$ is rigid and the unique companion $b$ of $q$ is
balanced. %\footnote{Recall the discussion preceding  \eqref{eq:1.11}.}
\end{examp}

\begin{remark}\label{rem:1.8}
It has been proved in \cite{QF1} that, if $V$ is free with basis
$(\veps_i \ds |i\in I),$ and the form~ $[1]$  over $R$ is
quasilinear (i.e., $(\al + \bt)^2=\al^2+\bt^2$ for all $\al,\bt\in
R),$ and moreover $2$ is not a zero divisor in $R$ (i.e.,
$2\al=0\Rightarrow \al=0$ in $R),$ then a form $q:V\to R$ is rigid
\textit{iff} $q(\veps_i)=0$ for all $i\in I$ (cf.
\cite[Thm. 3.5]{QF1}). For example, these conditions on $R$ hold
if~$R$ is supertropical.
\end{remark}

\begin{examp}\label{examp:1.9} Assume that $V$ is a submodule of a free $R$-module $V'.$ Let $(q',b')$ be a quadratic pair on $V'.$ It is evident that the restriction $(q' |V,b'|V)$ is a quadratic pair on $V.$\ \footnote{As usual, we denote the restriction of a bilinear form $C:V'\times V'\to R$ to $V\times V$ by $C|V$ (instead of $C|V\times V).$} Moreover, if $(q',b')$ is balanced, then $(q'|V,b'|V)$ is balanced. Also, if $B':V'\times V'\to R$ is an expansion of $q',$ then $B'|V$ is an expansion of $q'|V.$ We conclude by \propref{prop:1.1} that every quadratic form $q:V\to R$ which can be extended to a quadratic form on $V'$ is expansive and (hence) balanceable.
\end{examp}

The search for balanced and for expansive quadratic forms seems to
bear a close relation to the problem of ``lifting'' quadratic
pairs.

\begin{defn}\label{defn:1.10}
Assume that $\lm: R'\to R$ is  a surjective semiring homomorphism
and $\vrp: V'\to V$ is a $\lm$-linear\footnote{That is,  $\vrp$ is
additive and $\vrp(a'x')=\lm(a')\vrp(x')$ for $a'\in R',$ $x'\in
V'$. In the literature often the term ``semilinear with respect to $\lm$''
is used.} surjective map from an $R'$-module $V'$ onto an
$R$-module $V.$ Let $(q,b)$ be a quadratic pair on $V$ and let
$(q',b')$ be a quadratic pair on $V'.$ We say that $(q',b')$ is
\bfem{a ``lifting'' of} $(q,b)$ (with respect to $(\lm,\vrp))$, if
 $$\lm\circ q'=q\circ \vrp,\quad \lm\circ b'=b\circ(\vrp\times \vrp).$$
If only $q$ and $q'$ are given, we say that $q'$ is \bfem{a
lifting of} $q$, if $\lm\circ q'=q\circ\vrp$. If only $q$ or
$(q,b)$ is given, we say that $q$ or $(q,b)$ \bfem{can be lifted
to} $V'$ \bfem{via} $(\lm,\vrp)$,  if there exists a form~ $q'$ or a pair
$(q',b')$  with these properties.
\end{defn}

\begin{prop}\label{prop:1.8}
Assume that $\lm: R' \twoheadrightarrow R$ is given as in
Definition \ref{defn:1.10}, and that the $R'$-module $V'$ is free.
\begin{enumerate} \eroman  \dispace
\item Every quadratic form $q: V\to R$ can be lifted to a
quadratic form $q':V'\to R'.$

\item Every expansive quadratic
pair $(q,b)$ on $V$ can be lifted to an expansive quadratic pair
$(q',b')$ on $V'.$

\item Every balanced quadratic pair
$(q,b)$ on $V$ can be lifted to a balanced quadratic pair
$(q',b')$ on $V'.$
    \end{enumerate}
    \end{prop}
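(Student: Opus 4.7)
The plan is to exploit the freeness of $V'$ and the surjectivity of $\lm$: transfer the coefficient data that \propref{prop:1.1} attaches to $q$ (or to a companion $b$, or to an expansion $B$) from $R$ up to $R'$, use it to \emph{define} $q'$, $b'$, $B'$ on $V'$ via the same formulas \eqref{eq:1.9}--\eqref{eq:1.15}, and then verify compatibility with $(\lm,\vrp)$ by applying $\lm$ termwise. To set this up, I would fix a totally ordered basis $(\veps_i' \ds | i \in I)$ of $V'$ and put $\veps_i := \vrp(\veps_i')$. Because $\vrp$ is $\lm$-linear and surjective, $(\veps_i \ds | i \in I)$ is a system of generators of $V$, and every $x' = \sum_i x_i'\veps_i' \in V'$ satisfies $\vrp(x') = \sum_i \lm(x_i')\veps_i$.

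For (i), choose any companion $b$ of $q$, set $\al_i := q(\veps_i)$, $\bt_{ij} := b(\veps_i,\veps_j)$, and use that $\lm$ is onto to pick lifts $\al_i', \bt_{ij}' \in R'$ of these scalars. By \propref{prop:1.1}, the assignment $q'(x') := \sum_i \al_i'(x_i')^2 + \sum_{i<j}\bt_{ij}' x_i' x_j'$ defines a quadratic form on the free module $V'$, and applying $\lm$ to both sides and invoking \eqref{eq:1.9} for $q$ on the generators $\veps_i$ gives $\lm\circ q' = q\circ\vrp$.

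For (ii) and (iii) one lifts an expansion instead. Given an expansion $B$ of $(q,b)$, pick $\gm_{ij}' \in R'$ with $\lm(\gm_{ij}') = B(\veps_i,\veps_j)$ and define $B'(x',y') := \sum_{i,j} \gm_{ij}' x_i' y_j'$; then the pair $(q',b')$ with $q' := [B']$ and $b' := B' + (B')^\ont$ is expansive on $V'$, and the termwise application of $\lm$ to \eqref{eq:1.15} and \eqref{eq:1.10} yields the required identities $\lm\circ q' = q\circ\vrp$ and $\lm\circ b' = b\circ(\vrp\times\vrp)$. For (iii), where $(q,b)$ is balanced, I would first choose lifts $\al_i'$ of $\al_i = q(\veps_i)$ and then \emph{set} $\gm_{ii}' := 2\al_i'$; this is automatically a lift of $\bt_{ii} = 2\al_i$ thanks to the balanced hypothesis, while for $i<j$ any splitting $\gm_{ij}' + \gm_{ji}'$ of a lift of $\bt_{ij}$ works. \propref{prop:1.1} then ensures that the companion $b' = B' + (B')^\ont$ of $q'$ is balanced, completing the lift.

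There is no serious obstacle. Freeness of $V'$ converts the existence of $q'$, $b'$, $B'$ into a purely combinatorial lifting problem for their matrix entries, which is trivial when $\lm$ is surjective. The only point that demands genuine care is (iii): one must choose $\gm_{ii}'$ as $2\al_i'$ rather than as some arbitrary element of $\lm^{-1}(2\al_i)$, and must do so \emph{after} the $\al_i'$ have been fixed. This is legitimate because \propref{prop:1.1} leaves the diagonal entries of $B'$ entirely to our choice.
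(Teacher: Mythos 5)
Your overall strategy---push the coefficient data through the surjection $\lm$ and define $q'$, $b'$, $B'$ on the free module $V'$ by the polynomial formulas---is exactly the paper's, and parts (i) and (ii) are correct as written. Part (iii), however, contains a concrete error. You set the diagonal entries of the expansion $B'$ to $\gm_{ii}' := 2\al_i'$. Since $q' = [B']$ means $q'(x') = B'(x',x')$, this forces $q'(\veps_i') = \gm_{ii}' = 2\al_i'$, hence $\lm(q'(\veps_i')) = 2\al_i$, whereas a lift requires $\lm(q'(\veps_i')) = q(\vrp(\veps_i')) = \al_i$. In a general semiring $2\al_i \neq \al_i$, so the pair you construct fails $\lm\circ q' = q\circ\vrp$. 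Your closing justification is also false: the diagonal of an expansion is \emph{not} free---by \eqref{eq:1.13} any expansion $B'$ of $q'$ must satisfy $B'(\veps_i',\veps_i') = q'(\veps_i')$; what \propref{prop:1.1} leaves to your choice is only the off-diagonal splitting $\gm_{ij}' + \gm_{ji}' = \bt_{ij}'$. (You may have been misled by the misprint ``$\gm_{ii}:=2\al_i$'' in the statement of \propref{prop:1.1}, which contradicts \eqref{eq:1.13} and the triangular expansion \eqref{eq:1.20}.)

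The repair is one line: take $\gm_{ii}' := \al_i'$ for any lift $\al_i'$ of $\al_i$. Then $q'(\veps_i') = \al_i'$ lifts $\al_i$, and $b'(\veps_i',\veps_i') = 2\gm_{ii}' = 2\al_i'$ automatically lifts $\bt_{ii} = 2\al_i$ by the balanced hypothesis \eqref{eq:1.11}; the termwise application of $\lm$ then goes through exactly as in your parts (i) and (ii). Note that the paper's own proof of (iii) avoids expansions altogether: it defines $q'$ and $b'$ directly by the polynomials \eqref{eq:1.9} and \eqref{eq:1.10} with coefficients $\al_i'$ and $\bt_{ij}'$, putting $\bt_{ii}' := 2\al_i'$---there the quantity $2\al_i'$ is the diagonal of the \emph{companion} $b'$, not of an expansion, which is where your version conflated the two.
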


    \begin{proof}
    We choose a basis $(\veps_i' \ds|i\in I)$ of $V'$ and write  $\veps_i:=\vrp(\veps_i').$ Then
    $(\veps_i \ds|i\in I)$ is a system of generators of $V.$ We choose a total ordering on $I.$ \pSkip
    (i): Write $q$ as a polynomial \eqref{eq:1.9}, and choose preimages $\al_i',$ $\bt_{ij}'\in R$ of the coefficients $\al_i,$~ $\bt_{ij}$ under $\lm$. Then define $q'$ by the analogous polynomial, where the $\veps_i$ are replaced by the $\veps_i'$, and the coefficients $\al_i,\bt_{ij}$ are replaced by $\al_i',\bt'_{ij}.$ Clearly, $\lm\circ q'=q\circ \vrp.$ \pSkip
    (ii): Let $B: V\times V\to R$ be an expansion of $(q,b)$, and write  $\gm_{ij}:=B(\veps_i,\veps_j)$ for $i,j\in I.$  Choose preimages $\gm_{ij}'$ of the $\gm_{ij}$ under $\lm$ and define a bilinear from $B': V'\times V'\to R'$ by the rule $B'(\veps_i',\veps_j')=\gm_{ij}'.$ Clearly $\lm\circ B'=B\circ(\vrp\times\vrp).$ Then define $(q',b')$ as the quadratic pair with expansion $B',$ i.e., $b'=B'+(B')^\ont,$ $q'(x')=B'(x',x')$ for $x'\in V'.$ Clearly $(q',b')$ is a lifting of $(q,b).$ \pSkip
    (iii): Write $\al_i:=q(\veps_i),$ $\bt_{ij}:=b(\veps_i,\veps_j)$, for $i,j\in I$, to   have $\bt_{ii}=2\al_i$ (cf. \eqref{eq:1.11}). Choose for every $i\in I$ a preimage $\al_i'$ of $\al_i$ in $R'$, and for every $i<j$ a preimage $\bt_{ij}'$ of $\bt_{ij}$ in~$R'$. Write
    $$\bt_{ii}':=2\al_i',\qquad \bt_{ji}':=\bt_{ij}'.$$
    Then define $q'$ and $b'$ by the analogues of the polynomials \eqref{eq:1.9} and  \eqref{eq:1.10} with coefficients $\al_i',$ $\bt_{ij}'.$ Clearly $(q',b')$ is a balanced lifting of $(q,b).$
    \end{proof}

    Note that, we did not claim that every quadratic pair $(q,b)$ can be lifted to the free module~ $V'.$ In fact, this might  be false, as the following proposition reveals.

    \begin{prop}\label{prop:1.9}
    Assume that $R'$ is a subsemiring of a ring $R''$ and $\lm:R'\to R$ is a surjective homomorphism from $R'$ to a semiring $R.$ Assume further  that $\vrp:V'\to  V$ is a surjective $\lm$-linear map from an $R'$-module $V'$ to an $R$-module $V.$ If $(q,b)$ is a quadratic pair on $V,$ which can be lifted to $V'$ via $(\lm,\vrp)$, then the companion $b$ of $q$ must be balanced.
    \end{prop}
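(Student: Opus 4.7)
The plan is to push the balancing property from $V'$ down to $V$, using that $R'$ sits inside a ring, which forces the lift $(q',b')$ to be balanced in a very rigid way.

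First, I would apply Example \ref{examp:1.6} to the lifted pair $(q',b')$ on $V'$. Since $R'$ is a subsemiring of the ring $R''$, the quadratic form $q' : V' \to R'$ is rigid, and its unique companion is given by
\[
b'(x',y') = q'(x'+y') - q'(x') - q'(y'),
\]
the difference being computed inside $R''$. In particular $b'$ is balanced on $V'$, i.e., $b'(x',x') = 2 q'(x')$ for every $x' \in V'$.

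Next, I would transport this identity to $V$ using the two compatibility conditions $\lm \circ q' = q \circ \vrp$ and $\lm \circ b' = b \circ (\vrp \times \vrp)$ together with the surjectivity of $\vrp$. Given an arbitrary $x \in V$, choose $x' \in V'$ with $\vrp(x') = x$. Then
\[
b(x,x) = b(\vrp(x'),\vrp(x')) = \lm(b'(x',x')) = \lm(2 q'(x')) = 2\, \lm(q'(x')) = 2\, q(\vrp(x')) = 2\, q(x),
\]
where in the middle equality I use that $\lm$, being a semiring homomorphism, commutes with the natural number $2 = 1_{R'} + 1_{R'}$. Thus $b(x,x) = 2 q(x)$ for all $x \in V$, which is precisely the condition \eqref{eq:1.3} for $b$ to be balanced.

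There is essentially no hard step here, just a verification: the one thing to notice is that Example \ref{examp:1.6} is immediately applicable to the lift because of the hypothesis $R' \subset R''$, and this is what forces balancedness to begin with. The only mild subtlety is that the subtraction used to define $b'$ lives in $R''$ rather than in $R'$, but this plays no role once we know $b'$ already exists as a map $V' \times V' \to R'$ and satisfies $b'(x',x') = 2 q'(x')$ in $R'$.
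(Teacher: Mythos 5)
Your proposal is correct and follows exactly the paper's own argument: invoke Example \ref{examp:1.6} to see that the lift $b'$ is the unique (and balanced) companion of $q'$ over $R'\subset R''$, then use $\lm\circ b'=b\circ(\vrp\times\vrp)$, $\lm\circ q'=q\circ\vrp$ and the surjectivity of $\vrp$ to transfer $b'(x',x')=2q'(x')$ to $b(x,x)=2q(x)$ on all of $V$. Nothing to add.
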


    \begin{proof} Let $(q',b')$ be a lifting of $(q,b)$. We know by Example \ref{examp:1.6} that $b'$ is a balanced companion of $q'$ (and is the unique companion of $q').$ For every $x'\in V'$ we  conclude from $b'(x',x')=2q'(x)$ that
    $$b(\vrp(x'),\vrp(x'))=\lm(b'(x',x'))=\lm(2q'(x'))=2q(\vrp(x')).$$
    Since $\vrp$ is surjective, this implies  that $b(x,x)=2q(x)$ for every $x\in V.$
    \end{proof}

    \section{Isometric maps}\label{sec:2}

    Let $R$ be any semiring, and assume that $V=(V,q)$ and  $V'=(V',q')$ are quadratic $R$-modules.
    %(i.e., $R$-modules with quadratic forms).

    \begin{defn}\label{defn:2.1}
    A map $\sig:V\to V'$ is called \bfem{isometric}, if $\sig$ is $R$-linear and $q'(\sig x)=q(x)$ for every $x\in V.$
    \end{defn}

    We first state some very basic facts about isometric maps.

    \begin{prop}\label{prop:2.2}
    Assume that $(\veps_i \ds |i\in I)$ is a system of generators of the $R$-module $V.$ Assume that $\sig: V\to V'$ is any $R$-linear map, where for any two different indices $i,j\in I$ the restriction $\sig|R\veps_i+R\veps_j$ is isometric. Then $\sig$ is isometric.
    \end{prop}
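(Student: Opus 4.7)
The plan is to fix companions $b$ of $q$ and $b'$ of $q'$, then show that the isometric hypothesis on two-generator submodules forces agreement of all the ``ingredients'' (values of $q$ on generators, values of $b$ on pairs of generators) needed to compute $q(x)$ and $q'(\sigma x)$ via formula \eqref{eq:1.9}--\eqref{eq:1.10}. Finally, I would induct on the number of generators appearing in a representation of $x$.

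First, I would unpack the two-generator hypothesis to get two consequences. Applied to $x = c\veps_i + 0\cdot\veps_j$ with $c=1$, it yields $q'(\sigma\veps_i) = q(\veps_i)$ for every $i \in I$. Applied to $x = \veps_i + \veps_j$ with $i \ne j$ and using \eqref{eq:1.2}, namely
\[
q(\veps_i+\veps_j) = q(\veps_i)+q(\veps_j)+b(\veps_i,\veps_j)
\]
and the analogous expansion for $q'\bigl(\sigma\veps_i+\sigma\veps_j\bigr)$ with companion $b'$, combined with the previous step, it yields $b'(\sigma\veps_i,\sigma\veps_j) = b(\veps_i,\veps_j)$ for all $i \ne j$ in $I$.

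Next, given $x \in V$, pick a representation $x = \sum_{k=1}^n c_k\veps_{i_k}$ with pairwise distinct $i_k \in I$, and I would argue $q'(\sigma x) = q(x)$ by induction on $n$. The cases $n=0,1$ are immediate from $R$-linearity of $\sigma$ and $q'(\sigma\veps_{i_1}) = q(\veps_{i_1})$ together with \eqref{eq:1.1}. For the inductive step, write $x = y+z$ with $y = \sum_{k=1}^{n-1} c_k\veps_{i_k}$ and $z = c_n\veps_{i_n}$. By \eqref{eq:1.2} applied on both sides, together with $R$-bilinearity of $b,b'$,
\[
q(x) = q(y) + q(z) + \sum_{k=1}^{n-1} c_k c_n\, b(\veps_{i_k},\veps_{i_n}),
\]
\[
q'(\sigma x) = q'(\sigma y) + q'(\sigma z) + \sum_{k=1}^{n-1} c_k c_n\, b'(\sigma\veps_{i_k},\sigma\veps_{i_n}).
\]
The inductive hypothesis gives $q'(\sigma y) = q(y)$ (applied to the $(n-1)$-term representation) and $q'(\sigma z) = q(z)$ (the single-generator case), while the identity $b'(\sigma\veps_{i_k},\sigma\veps_{i_n}) = b(\veps_{i_k},\veps_{i_n})$ established above matches the cross terms. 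Hence $q'(\sigma x) = q(x)$.

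The only mildly delicate point is that companions of $q$ and $q'$ are not uniquely determined in the semiring setting, so one must verify that the argument is insensitive to this choice: that is why I derive $b'(\sigma\veps_i,\sigma\veps_j) = b(\veps_i,\veps_j)$ for one fixed pair of companions rather than appealing to any canonical companion. Once those values match, the induction above is essentially formal, and no further appeal to the structure of $R$ is needed.
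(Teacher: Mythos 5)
Your argument breaks down at the step where you derive $b'(\sigma\veps_i,\sigma\veps_j)=b(\veps_i,\veps_j)$. What the two-generator hypothesis actually gives you, after inserting $q'(\sigma\veps_i)=q(\veps_i)$ and $q'(\sigma\veps_j)=q(\veps_j)$, is the equation
$$q(\veps_i)+q(\veps_j)+b(\veps_i,\veps_j)\;=\;q(\veps_i)+q(\veps_j)+b'(\sigma\veps_i,\sigma\veps_j)$$
in $R$; a semiring has no additive cancellation, so you cannot strip off $q(\veps_i)+q(\veps_j)$ to isolate the cross terms. The claimed identity is in fact false in general: over a supertropical (max-plus type) semiring, if $q(\veps_i)+q(\veps_j)$ dominates both cross terms, the displayed equation holds no matter what $b(\veps_i,\veps_j)$ and $b'(\sigma\veps_i,\sigma\veps_j)$ are, and since companions are far from unique nothing forces them to agree. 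Your induction then rests on term-by-term identities that are simply unavailable.

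The repair --- and this is exactly what the paper's proof does --- is to never cancel: apply the hypothesis to $x_i\veps_i+x_j\veps_j$ at the \emph{actual} scalars occurring in the chosen presentation of $x$, which yields the equality of whole three-term blocks
$$\al_ix_i^2+\al_jx_j^2+\bt_{ij}x_ix_j\;=\;\al_ix_i^2+\al_jx_j^2+\bt'_{ij}x_ix_j,$$
and then observe that the polynomial expression \eqref{eq:1.9} for $q(x)$ contains each such block as a sub-sum, so one may substitute the primed version block by block (adding the same remaining terms to both sides of each equality) without ever isolating a single coefficient. This turns \eqref{eq:2.1} into \eqref{eq:2.2} and gives $q(x)=q'(\sigma x)$. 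Your overall structure (fix companions, compare generator data, reduce to finitely many generators) is close in spirit to the paper's, but the cancellation step is a genuine gap, not a formality, and it is precisely the point the paper's substitution argument is designed to avoid.
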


    \begin{proof}
   a) We first assume that $I=\{1,2,\dots,n\}$  is finite. Let $\veps_i':=\sig(\veps_i),$ and write
     $$\al_i:=q(\veps_i)=q'(\veps_i').$$
    Choose companions $b$ of $q$ and $b'$ of $q',$ and  for $i<j$ write
    $$\bt_{ij}:=b(\veps_i,\veps_j),\qquad \bt_{ij}':=b'(\veps_i',\veps_j').$$
    Given  $x\in V$, we choose a presentation
    $$x=\sum_{i=1}^nx_i\veps_i,$$
    and then have
    $$\sig(x)=\sum_{i=1}^nx_i\veps_i'.$$
    It follows (cf. \eqref{eq:1.9}) that
    \begin{equation}\label{eq:2.1}
    q(x)=\sum_{i=1}^n\al_ix_i^2+\sum_{i<j}\bt_{ij}x_ix_j,
    \end{equation}
    and
    \begin{equation}\label{eq:2.2}
    q'(\sig(x))=\sum_{i=1}^n\al_ix_i^2+\sum_{i<j}\bt_{ij}'x_ix_j.
    \end{equation}

    For any $i<j$, we have
   % \begin{align*}
  \begin{align*} \al_ix_i^2  +\al_jx_j^2+\bt_{ij}x_ix_j & =q(x_i\veps_i+x_j\veps_j)
\\&=q'(x_i\veps_i'+x_j\veps_j') \\& =\al_ix_i^2+\al_jx_j^2+\bt_{ij}'x_ix_j.
\end{align*}
Thus, the right hand side of \eqref{eq:2.1} remains unchanged
if $\bt_{ij}$ are replaced everywhere by $\bt_{ij}'$.   By ~\eqref{eq:2.2}
we obtain $q(x)=q'(\sig(x)).$ \pSkip
b) When  $I$ is infinite, we know that
$\sig\Big|\sum\limits_{i\in J}R\veps_i$ is isometric for every finite
$J\subset I.$ It follows trivially that $\sig$ is isometric.
\end{proof}

Taking $V'=V,$ $\sig=id_V,$ we obtain

\begin{cor}\label{cor:2.3}
Let $(\veps_i \ds|i\in I)$ be a system of generators of $V$ and
let $q:V\to R,$ $q':V\to R$ be quadratic forms on $V.$ Suppose
that, for any two different indices $i,j\in I$,
$$q|R\veps_i+R\veps_j=q'|R\veps_i+R\veps_j.$$ Then $q=q'.$
\end{cor}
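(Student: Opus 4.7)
The plan is essentially to invoke Proposition 2.2 directly with the data $V' := V$, $\sigma := \id_V$, viewing $V$ as equipped with the two (possibly different) quadratic forms $q$ and $q'$. The identity map $\id_V$ is obviously $R$-linear, so the only hypothesis to check is that for each pair of distinct indices $i,j \in I$ the restriction $\id_V \big| R\veps_i + R\veps_j$ is isometric as a map from $(R\veps_i + R\veps_j,\, q|_{R\veps_i + R\veps_j})$ into $(R\veps_i + R\veps_j,\, q'|_{R\veps_i + R\veps_j})$. But that is exactly the assumption $q|R\veps_i + R\veps_j = q'|R\veps_i + R\veps_j$, read pointwise: for every $x$ in that submodule one has $q'(\id_V(x)) = q'(x) = q(x)$.

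Once this verification is recorded, Proposition \ref{prop:2.2} applied to $\sig = \id_V$ yields that $\id_V : (V,q) \to (V,q')$ is isometric on all of $V$, i.e.\ $q'(x) = q'(\id_V(x)) = q(x)$ for every $x \in V$, which is the desired equality $q = q'$.

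There is no real obstacle here; the statement is a direct specialization of Proposition \ref{prop:2.2}. The only mild subtlety worth flagging is purely notational: the reader must parse ``isometric'' in the hypothesis of Proposition \ref{prop:2.2} as comparing the two quadratic structures $(V,q)$ and $(V',q')$, which in our application happen to share the underlying module $V$ but carry possibly different forms $q$ and $q'$. Consequently, the proof can be written in just a few lines and reduces to matching hypotheses.
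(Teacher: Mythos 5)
Your proposal is correct and coincides with the paper's own derivation: the corollary is stated there precisely as the specialization of Proposition~\ref{prop:2.2} to $V'=V$ and $\sig=\id_V$. Your verification that the hypothesis $q|R\veps_i+R\veps_j=q'|R\veps_i+R\veps_j$ is exactly the isometry condition for $\id_V$ on those submodules is the only point that needs checking, and you handle it correctly.
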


\begin{prop}\label{prop:2.4}
Assume that $V$ is an $R$-module, and $(\veps_i \ds|i\in I)$ is a
system of generators of~$V,$ with $|I|>1.$ Let $q:V\to R$ be a
quadratic form, and let $b:V\times V\to R$ be a symmetric bilinear form.

\begin{enumerate} \eroman \dispace
\item Suppose that, for any two different indices $i,j\in I,$
$b|R\veps_i+R\veps_j$ is a companion of $q.$ Then $b$ is a
companion of $q.$

    \item If, in addition, $b(\veps_i,\veps_i)=2q(\veps_i)$ for every $i\in I,$ then $b$ is a balanced companion.
        \end{enumerate}
        \end{prop}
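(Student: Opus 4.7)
For (i), the plan is to expand both sides of $q(x+y)=q(x)+q(y)+b(x,y)$ using the explicit formulas \eqref{eq:1.9} and \eqref{eq:1.10}, and then to bridge the two resulting polynomial expressions by a sequence of substitutions provided by the hypothesis, in the style of the proof of \propref{prop:2.2}. First, fix any companion $b_0$ of $q$ (which exists because $q$ is a quadratic form), write $x=\sum x_i\veps_i$, $y=\sum y_i\veps_i$, and set $\al_i:=q(\veps_i)$, $\bt_{ij}^0:=b_0(\veps_i,\veps_j)$ for $i<j$, and $\eta_{ij}:=b(\veps_i,\veps_j)$. Expanding $q(x+y)$ via \eqref{eq:1.9} together with $(x_i+y_i)^2=x_i^2+y_i^2+2x_iy_i$, and expanding $q(x)+q(y)+b(x,y)$ via \eqref{eq:1.9}--\eqref{eq:1.10}, one sees that the ``pure'' blocks $\al_i(x_i^2+y_i^2)$ and $\bt_{ij}^0(x_ix_j+y_iy_j)$ agree in the two expressions; only the mixed $x$--$y$ parts differ, namely $\sum_i 2\al_ix_iy_i+\sum_{i<j}\bt_{ij}^0(x_iy_j+x_jy_i)$ on the left versus $\sum_i\eta_{ii}x_iy_i+\sum_{i<j}\eta_{ij}(x_iy_j+x_jy_i)$ on the right.

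The hypothesis supplies precisely the pairwise identities needed to swap these mixed coefficients. For $i\neq j$ and arbitrary $s,t\in R$, computing $q(s\veps_i+t\veps_j)$ via \eqref{eq:1.9} with the companion $b_0$ on $V$ gives $\al_is^2+\al_jt^2+\bt_{ij}^0st$, while computing it via the restricted companion $b|R\veps_i+R\veps_j$ of $q|R\veps_i+R\veps_j$ (furnished by hypothesis) gives $\al_is^2+\al_jt^2+\eta_{ij}st$; the two values coincide, producing
\[
\al_is^2+\al_jt^2+\bt_{ij}^0st=\al_is^2+\al_jt^2+\eta_{ij}st.
\]
Specializing to $(s,t)=(x_i,y_j)$ and to $(s,t)=(y_i,x_j)$ and adding yields
\[
\al_i(x_i^2+y_i^2)+\al_j(x_j^2+y_j^2)+\bt_{ij}^0(x_iy_j+x_jy_i)=\al_i(x_i^2+y_i^2)+\al_j(x_j^2+y_j^2)+\eta_{ij}(x_iy_j+x_jy_i).
\]
The same reasoning applied to $s\veps_i+t\veps_i\in R\veps_i\subseteq R\veps_i+R\veps_j$, where the auxiliary $j\neq i$ exists by the assumption $|I|>1$, gives $\al_i(x_i^2+y_i^2)+2\al_ix_iy_i=\al_i(x_i^2+y_i^2)+\eta_{ii}x_iy_i$ at $(s,t)=(x_i,y_i)$.

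With both families of pairwise identities at hand, I would then carry out, one step at a time inside the expansion of $q(x+y)$, the substitutions replacing $2\al_ix_iy_i$ by $\eta_{ii}x_iy_i$ for each $i$, and $\bt_{ij}^0(x_iy_j+x_jy_i)$ by $\eta_{ij}(x_iy_j+x_jy_i)$ for each $i<j$. The only subtle point is that $R$ admits no cancellation; however, each pairwise identity has the shape $A+X=A+Y$, and the ``$A$-terms'' $\al_i(x_i^2+y_i^2)$ needed for every swap are all present in the sum $\sum_i\al_i(x_i^2+y_i^2)$ and are not consumed, since a step of the form $A+X+C=A+Y+C$ (obtained by adding the surroundings $C$ to both sides of $A+X=A+Y$) keeps $A$ intact. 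Hence each swap goes through without any cancellation, exactly as in the proof of \propref{prop:2.2}, and after all swaps have been performed the expansion of $q(x+y)$ becomes that of $q(x)+q(y)+b(x,y)$, proving (i).

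Statement (ii) is then immediate from (i): the additional hypothesis $b(\veps_i,\veps_i)=2q(\veps_i)$ for every $i\in I$ is exactly the condition \eqref{eq:1.11}, and the paragraph following that equation in \S\ref{sec:1} records the equivalence of this condition with $b$ being a balanced companion.
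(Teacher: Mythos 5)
Your proof is correct and follows essentially the same route as the paper: fix an auxiliary companion $b_0$, expand $q(x+y)$ and $q(x)+q(y)+b(x,y)$ via \eqref{eq:1.9}--\eqref{eq:1.10}, and use the pairwise hypothesis to swap the mixed coefficients one term at a time, which is legitimate despite the lack of cancellation because each identity carries its own additive "padding" $\al_i(x_i^2+y_i^2)+\al_j(x_j^2+y_j^2)$ already present in the ambient sum. Your treatment is in fact slightly more careful than the paper's about the diagonal terms ($2\al_i x_i y_i$ versus $b(\veps_i,\veps_i)x_iy_i$, using an auxiliary index $j\ne i$ supplied by $|I|>1$) and about why the substitutions survive in a semiring, and part (ii) is disposed of exactly as in \S\ref{sec:1} via condition \eqref{eq:1.11}.
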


        \begin{proof}
        It suffices to prove the claim for $I$ finite, $I=\{1,\dots, n\}.$
        \pSkip
        (i): We choose a companion $b_0$ of $q$ and write, for any $i,j\in I$,
        \begin{align*} \al_i:=q(\veps_i), \qquad \bt_{ij}^0=b_0(\veps_i,\veps_j), \qquad
        \bt_{ij}:=b(\veps_i,\veps_j).
        \end{align*}
        Given $x,y\in V,$ we choose presentations
        $$x=\sum_{i=1}^nx_i\veps_i,\qquad y=\sum_{i=1}^ny_i\veps_i.$$
        Then (cf. \eqref{eq:1.9})
        \begin{equation*}\label{eq:2.3}
        q(x)=\sum_{i=1}^n\al_ix_i^2+\sum_{i<j}2\bt_{ij}^0x_ix_j,
        \end{equation*}
               \begin{equation*}\label{eq:2.4}
        q(y)=\sum_{i=1}^n\al_iy_i^2+\sum_{i<j}2\bt_{ij}^0y_iy_j.
        \end{equation*}
        Given $i<j,$ we have
        \begin{align}
        \al_ix_i^2+\al_jx_j^2+2\bt_{i,j}^0x_ix_j & =q(x_i\veps_i+x_j\veps_j) \nonumber\\ & =\al_ix_i^2
        +\al_jx_j^2+2\bt_{ij}x_ix_j\label{eq:2.5}
      \end{align}
        and
        \begin{align}
        \al_iy_i^2+\al_j^2y_j^2+2\bt_{ij}^0y_iy_j & =q(y_i\veps_i+y_i\veps_j) \nonumber \\ &=\al_i
       y_i^2+\al_jy_j^2+2\bt_{ij}y_iy_j,  \label{eq:2.6}
        \end{align}
        \begin{align} &
        q(x_i\veps_i+x_j\veps_j)+q(y_i\veps_i+y_j\veps_j)+\sum_{i,j=1}^n
        \bt_{ij}^0x_iy_j   \nonumber  \\ & =  q(x_i\veps_i+x_j\veps_j)+q(y_i\veps_i+y_j\veps_j)
        +\sum_{i,j=1}^n\bt_{ij}x_iy_j. \label{eq:2.7.0}
        \end{align}
        Then
        \begin{align*} q(x+y) & =q(x)+q(y)+b_0(x,y)
        \\ &=\sum_{i=1}^n\al_ix_i^2+\sum_{i<j}2\bt_{ij}^0x_ix_j+\sum_{i=1}^n\al_i
        y_i^2+\sum_{i<j}2\bt_{ij}^0y_i y_j+\sum_{i,j=1}^n\bt_{ij}^0x_iy_j.
        \end{align*}
        This sum remains unchanged  if we replace everywhere $\bt_{ij}^0$ by $\bt_{ij} ,$ due to
        \eqref{eq:2.5},  \eqref{eq:2.6},  and ~ \eqref{eq:2.7.0}.
        This tells us that
        $$q(x+y)=q(x)+q(y)+b(x,y).$$
\pSkip
        (ii): By \S\ref{sec:1} (cf. \eqref{eq:1.11}), the companion $b_0$ is balanced
        \begin{align*}
        &\Leftrightarrow  \bt_{ii}^0=2\al_i\ \forall i\\[1mm]
        &\Leftrightarrow b_0|R\veps_i+R\veps_j\ \text{balanced}\ \forall i,j\\[1mm]
        &\Leftrightarrow b |R\veps_i+R\veps_j\ \text{balanced}\ \forall i,j\\[1mm]
        &\Leftrightarrow \bt_{ii}=2\al_i \ \forall i\in I\\[1mm]
        &\Leftrightarrow b  \ \text{is balanced}.
        \end{align*}
        Since $B|R\veps_i$ is an expansion of $q|R\veps_i$, we  have
        \begin{equation}\label{eq:2.7}
        \gm_{ii}=\al_i
        \end{equation}
        for all $i\in I.$

        For any $i<j,$ we have
        $$q(x_i\veps_i+x_j\veps_j)=B(x_i\veps_i+x_j\veps_j,x_i\veps_i+x_j
        \veps_j),$$
        whence, by \eqref{eq:2.7},
        \begin{equation}\label{eq:2.8}
        \al_ix_i^2+\al_jx_j^2+\bt_{ij}^0x_ix_j=\gm_{ii}x_i^2+\gm_{jj}x_j^2+(\gm_{ij}+
        \gm_{ji})x_ix_j,
        \end{equation}
        and then, by \cite[Eq. (1.12)]{QF1},
        \begin{align*}q(x)&=\sum_{i=1}^n\al_ix_i^2+\sum_{i<j}\bt_{ij}^0x_ix_j\\ & \underset{\eqref{eq:2.7}\eqref{eq:2.8}}=
        \sum_{i=1}^n\gm_{ii}x_i^2+\sum_{i<j}(\gm_{ij}+\gm_{ji})x_ix_j\\
        &=B\bigg(\sum_{i=1}^nx_i\veps,
        \sum_{i=1}^nx_i\veps_i\bigg) \quad =B(x,x).
        \end{align*}
%        Indeed, we have $\al_i=\gm_{ii}$ by \eqref{eq:2.7}, and using \eqref{eq:2.8} we may replace in the expression for $q(x)$ all $\bt_{ij}^0$ by $\gm_{ij}+\gm_{ji}.$
         \end{proof}

         \begin{prop}\label{prop:2.5} Assume  that $V$ is an $R$-module and $(\veps_i \ds|i\in I)$ is a system of generators of $V$, with $|I| >1$. Let $q : V \to R $ be a quadratic form and let $B :V\times V\to R$ be a  bilinear form (not necessarily symmetric).  Suppose that, for any two different $i,j \in I $,  $B|R\veps_i + R\veps_j$ is an expansion of $q|R\veps_i + R\veps_j$. \footnote{Recall that, if $W$ is a submodule of $V$, we write $B|W := B|W \times W$.} Then $B$ is an expansion of $q$.

         \end{prop}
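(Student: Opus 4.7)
The plan is to reduce to the finite case and then to invoke Proposition~\ref{prop:2.4} applied to the symmetrization $b := B + B^{\ont}$, after which the conclusion follows from the explicit polynomial formulas of \S\ref{sec:1}.

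First I would reduce to $I$ finite. Any $x \in V$ lies in a submodule of the form $\sum_{i \in J}R\veps_i$ for some finite $J \subset I$ with $|J|>1$, so it suffices to show $B(x,x) = q(x)$ for $x$ in each such submodule; hence we may assume $I = \{1,\dots,n\}$ with $n \geq 2$. Write $\gm_{ij} := B(\veps_i,\veps_j)$ and $\al_i := q(\veps_i)$. The hypothesis that $B|R\veps_i + R\veps_j$ is an expansion of $q|R\veps_i + R\veps_j$ means
\[
q(u\veps_i + v\veps_j) = B(u\veps_i + v\veps_j,u\veps_i + v\veps_j) = \gm_{ii}u^2 + \gm_{jj}v^2 + (\gm_{ij}+\gm_{ji})uv
\]
for all $u,v \in R$ and $i \ne j$. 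Taking $v=0$ yields $\gm_{ii} = \al_i$ (this uses $|I|>1$, so that each $\veps_i$ sits in some pair-submodule).

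Next I would pass to the symmetrized bilinear form $b := B + B^{\ont}$. For each pair $i \ne j$, the restriction $B|R\veps_i + R\veps_j$ is an expansion of $q|R\veps_i + R\veps_j$, so $b|R\veps_i + R\veps_j = (B + B^{\ont})|R\veps_i + R\veps_j$ is by definition a (balanced) companion of $q|R\veps_i + R\veps_j$ (cf. the symmetrization \eqref{eq:1.5}). Proposition~\ref{prop:2.4}(i) then tells us that $b$ is a companion of $q$ on all of $V$. In particular, setting $\bt_{ij} := b(\veps_i,\veps_j) = \gm_{ij}+\gm_{ji}$, formula \eqref{eq:1.9} gives
\[
q(x) \;=\; \sum_{i=1}^{n}\al_i x_i^2 \;+\; \sum_{i<j}\bt_{ij}x_ix_j \;=\; \sum_{i=1}^{n}\gm_{ii}x_i^2 \;+\; \sum_{i<j}(\gm_{ij}+\gm_{ji})x_ix_j
\]
for every $x = \sum x_i\veps_i$.

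Finally, a direct expansion of $B(x,x)$ using bilinearity yields
\[
B(x,x) = \sum_{i,j=1}^{n}\gm_{ij}x_ix_j = \sum_{i=1}^{n}\gm_{ii}x_i^2 + \sum_{i<j}(\gm_{ij}+\gm_{ji})x_ix_j,
\]
which matches the expression above for $q(x)$. Hence $B(x,x) = q(x)$ for all $x \in V$, proving that $B$ is an expansion of $q$. The only mildly delicate point is the passage through Proposition~\ref{prop:2.4}, whose hypotheses must be verified for the symmetric form $b = B+B^{\ont}$; once $b$ is known to be a companion of $q$ globally, the matching of coefficients is a routine comparison of the two polynomial expressions.
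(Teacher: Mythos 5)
Your proof is correct. It differs from the paper's own argument in one organizational respect: the paper proceeds directly, choosing an arbitrary companion $b_0$ of $q$, writing $q(x)=\sum_i\al_ix_i^2+\sum_{i<j}\bt^0_{ij}x_ix_j$ via \eqref{eq:1.9}, and then using the two-variable identity $\al_ix_i^2+\al_jx_j^2+\bt^0_{ij}x_ix_j=\gm_{ii}x_i^2+\gm_{jj}x_j^2+(\gm_{ij}+\gm_{ji})x_ix_j$ to replace the cross terms one pair at a time (the same substitution device as in the proof of \propref{prop:2.2}), arriving at $B(x,x)$. You instead first promote the symmetrization $b=B+B^\ont$ to a global companion of $q$ by invoking \propref{prop:2.4}(i), after which \eqref{eq:1.9} applied to $b$ itself yields the identification with $B(x,x)$ and no replacement is needed. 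The two arguments rest on the same underlying mechanism---the replacement trick is simply hidden inside the proof of \propref{prop:2.4}(i)---but your factorization through that proposition is legitimate (there is no circularity, since \propref{prop:2.4} precedes and does not use \propref{prop:2.5}) and makes the logical dependence more transparent, at the cost of having to verify its hypotheses for $B+B^\ont$, which you do correctly. The reduction to finite $I$ and the derivation of $\gm_{ii}=\al_i$ from the restriction to $R\veps_i\subset R\veps_i+R\veps_j$ are identical in both versions.
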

        \begin{proof} We choose a companion $b_0$ of $q$. We may assume that $I = \{ 1, \dots, n\}$
 is finite. For any $i,j \in I$  we write
 \begin{align*} \al_i  := q(\veps_i), \qquad   \bt^0_{ij} := b_0(\veps_i, \veps_j), \qquad
  \gm_{ij} := B(\veps_i, \veps_j).
 \end{align*}
 Since $B| R\veps_i$ is an expansion of $q|R\veps_i$, we have

  \begin{equation}\label{prop:eq:2.5.a}
  \gm_{ii} = \al_i \tag{$*$}
 \end{equation}
 for all $i \in I$.
    For any $i < j $ we have
    $$ q(x_i\veps_i + x_j \veps_j) = B(x_i\veps_i + x_j \veps_j,x_i\veps_i + x_j \veps_j),$$
    whence, by $\eqref{prop:eq:2.5.a}$,

  \begin{equation}\label{prop:eq:2.5.b}
  \al_i x_i^2 +\al_jx_j^2  + \bt^0_{ij} x_i x_j =    \gm_{ii} x_i^2 +\gm_{jj} x_j^2  + (\gm_{ij} + \gm_{ji}) x_i x_j, \tag{$**$}
  \end{equation}
  and then, by \eqref{eq:1.9},
 \begin{align*}  q(x) & =
 \sum_1^n \al_i x_i^2 + \sum_{i<j} \bt^0_{ij} x_i x_j  \\
 & \underset{(*), (**)}{=}  \sum_1^n \gm_{ii} x_i^2   + \sum_{i<j} (\gm_{ij} + \gm_{ji}) x_i x_j \\ & =
 B\bigg( \sum_1^n  x_i \veps_i \; , \sum_1^n  x_i \veps_i \bigg) \quad  = B(x,x).
  \end{align*}
%  Indeed we have $\al_i = \gm_{ii}$ by \eqref{prop:eq:2.5.a}, and using \eqref{prop:eq:2.5.b} we may
%  replace in the expression for $q(x)$ all $\bt^0_{ij}$ by $\gm_{ij} + \gm_{ji}$.
        \end{proof}

        For technical reasons, we also define isometric maps with respect to quadratic pairs.

        \begin{defn}\label{defn:2.6}
        Let $\sig: V\to V'$ be an $R$-linear map, and let $(q,b)$ and $(q',b')$ be quadratic pairs  on~ $V$ and  on $V'$, respectively. We say that $\sig$ is \bfem{isometric with respect to} $(q,b)$ \textbf{and} $(q',b'),$ if  $q'(\sig x)=q(x)$ for all $x\in V$ (i.e., $\sig$ is isometric with respect to $q$ and $q'$, as defined above) and $b'(\sig x,\sig y)=b(x,y)$ for all $x,y\in V.$ In short, $\sig$ is isometric with respect to $(q,b)$ and $(q',b')$ iff
        \begin{equation}\label{eq:2.9}
        q'=q\circ\sig,\qquad b'=b\circ(\sig\times \sig).
        \end{equation}
        \end{defn}

        We state a counterpart to \propref{prop:2.2}.

        \begin{prop}\label{prop:2.6}
        Assume that $\sig:V\to V'$ is an $R$-linear map, and that $(q,b)$ and $(q',b')$ are quadratic pairs on $V$ and $V'$ respectively. Assume further  that $(\veps_i\ds|i\in I)$ is a system of generators of the $R$-module $V,$ $|I|>1,$ and that $\sig|R\veps_i+R\veps_j$ is isometric with respect to $(q|R\veps_i+R\veps_j\, ,b|R\veps_i+\veps_j)$ and $(q',b')$ for all pairs of indices $i,j\in I.$ Then $\sig$ is isometric with respect to $(q,b)$ and $(q',b').$
        \end{prop}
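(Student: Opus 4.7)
The plan is to verify the two conditions of Definition \ref{defn:2.6} separately: namely $q'\circ \sig = q$ and $b'(\sig x, \sig y) = b(x,y)$ for all $x,y \in V$. For the first, observe that the isometry hypothesis in particular ensures, for every pair of distinct $i,j \in I$, that the $R$-linear map $\sig|R\veps_i + R\veps_j$ satisfies $q'\circ\sig = q$ on $R\veps_i + R\veps_j$ (this is just the quadratic-form half of Definition \ref{defn:2.6}). Therefore Proposition \ref{prop:2.2} applies verbatim and yields $q'(\sig x) = q(x)$ for all $x \in V$.

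For the bilinear condition, the hypothesis gives at once the identity
\[
b'(\sig \veps_i, \sig \veps_j) \;=\; b(\veps_i, \veps_j)
\]
for every pair $i,j \in I$. When $i \neq j$ this is immediate from the isometry on $R\veps_i + R\veps_j$ applied to the arguments $(\veps_i, \veps_j)$. When $i = j$, the assumption $|I| > 1$ lets us pick some $k \neq i$, and since $\veps_i \in R\veps_i + R\veps_k$, the isometry of $\sig|R\veps_i + R\veps_k$ delivers the diagonal identity $b'(\sig\veps_i, \sig\veps_i) = b(\veps_i,\veps_i)$. Given now arbitrary $x = \sum_i x_i \veps_i$ and $y = \sum_j y_j \veps_j$ in $V$ (with finite support), $R$-bilinearity of both $b$ and $b'$ together with $R$-linearity of $\sig$ reduce the verification to the level of generators:
\[
b'(\sig x, \sig y) \;=\; \sum_{i,j} x_i y_j\, b'(\sig \veps_i, \sig \veps_j) \;=\; \sum_{i,j} x_i y_j\, b(\veps_i, \veps_j) \;=\; b(x,y),
\]
the sums being finite. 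This completes the argument.

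There is no serious obstacle here; the whole proposition reduces by bilinearity to matching values on pairs of generators, where the hypothesis is precisely what is needed. The only slightly delicate point is pinning down the diagonal values $b(\veps_i, \veps_i)$, which forces us to borrow an auxiliary index $k \neq i$ and is the reason the assumption $|I| > 1$ appears. In the finite case one may alternatively invoke the matrix-style computation of Proposition \ref{prop:2.4} restricted to bilinear data, but direct bilinear expansion as above is cleaner and extends to infinite $I$ without modification.
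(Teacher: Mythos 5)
Your proof is correct, and the bilinear half coincides with the paper's argument; you are in fact more explicit than the paper in justifying the diagonal identity $b'(\sig\veps_i,\sig\veps_i)=b(\veps_i,\veps_i)$ by borrowing an auxiliary index $k\neq i$, which is indeed where the hypothesis $|I|>1$ is used. The only divergence is in the quadratic half: you invoke Proposition~\ref{prop:2.2}, which is legitimate because the pair-isometry hypothesis on each $R\veps_i+R\veps_j$ contains the form-isometry hypothesis of that proposition; the paper instead gives a direct computation via formula~\eqref{eq:1.9}, noting that once both the values $q(\veps_i)$ and the companion values $b(\veps_i,\veps_j)$ are matched under $\sig$, the polynomial expression for $q(x)$ transfers to $q'(\sig(x))$ term by term. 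This is why the paper calls its proof a ``triviality'' in contrast to Proposition~\ref{prop:2.2}, whose proof must cope with the possibility that the companions of $q$ and $q'$ do not correspond. Both routes are valid: yours economizes by reusing an established result, while the paper's is self-contained and slightly more elementary given that the companions are already known to agree on generators.
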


        \begin{proof}
        In contrast to \propref{prop:2.2} this is a triviality. Choose a total ordering on $I$,  and
        let $\veps_i'=\sig(\veps_i).$ Then, for any $i,j\in I$, we have $b'(\veps_i',\veps_j')=b(\veps_i,\veps_j)$, which implies $b'\circ(\sig\times\sig)=b.$ Furthermore, $q'(\veps_i')=q(\veps_i)$ for all $i\in I.$ For $x=\sum\limits_i x_i\veps_i\in V,$ we have $\sig(x)=\sum\limits_i x_i\veps_i',$ and
        \begin{align*} q(x) & =\sum_i x_i^2q(\veps_i)+\sum_{i<j}x_ix_jb(\veps_i,\veps_j) \\ & =\sum_i x_i^2q'(\veps_i')+\sum_{i<j}x_ix_jb'(\veps_i',\veps_j')\quad  =q'(\sig(x)).\end{align*} \vskip -5mm
        \end{proof}

        \section{A strategy to describe subforms of a quadratic form on a free module}\label{sec:3}

        Assume that $(q,b)$ is a quadratic pair on a free $R$-module $V.$ Fixing a basis $(\veps_i\ds|i\in I)$ of~ $V,$  $q$ and $b$ can be described by matrices, as indicated in \S\ref{sec:1}. These matrices provide a useful computational approach to analyzing $q$ and $b.$

        As mentioned,  in the important case that $R\setminus\{0\}$ is closed under addition and multiplication, and also when $R$ is any supertropical semiring, the basis $(\veps_i \ds|i\in I)$ of $V$ is unique up to multiplication of the $\veps_i$ by units of $R$ (cf. \cite[Therorem 0.9]{IKR1}). Therefore, $(q,b)$ is a very rigid object, well amenable to combinatorial arguments.
        But even then, in general, an easy description of ``subforms'' of $q$ and ``subpairs'' of $(q,b)$, defined below, is not always accessible.

        \begin{defn}\label{defn3.1}
        A \bfem{subform} of a quadratic form $q:V\to R$ is a restriction $q| U:U \to R$ of~ $q$ to an $R$-submodule $U$ of $V.$ A \bfem{subpair} of a quadratic pair $(q,b)$ on $V$ is a pair
        $(q|U,b|U)$ on a submodule $U$ of $V.$
        \end{defn}

        Submodules of free modules most often are not free. Nevertheless, as % a small consolation
        pointed in  \S\ref{sec:1} (Example \ref{examp:1.9}),  if $V$ is free, then a subform of $q:V\to R$  is still a decent object: it is expansive and (hence) balanceable.

        We next  outline a strategy to understand subforms and subpairs. We choose a system of generators $(\eta_k \ds|k\in K)$ of $U$, take a free $R$-module $U'$ with basis $(\eta_k'\ds|k
       \in K)$ (same index set~$K),$ and define an $R$-linear map $\vrp: U'\to V$ by letting
       $$\vrp\Big(\sum_{k\in K}x_k\eta_k'\Big):=\sum_{k\in K}x_k\eta_k$$
       ($x_k\in R,$ almost all $x_k=0).$ From $q$ and $(q,b)$, we obtain a quadratic form $q':=q\circ\vrp$ and a quadratic pair $(q',b'):=(q\circ\vrp,b\circ(\vrp\times\vrp))$ on $U'$ which can be described by matrices. Then we utilize  the fact that the map $\vrp:U'\to V$  is isometric with respect to $(q',b')$ and $(q,b)$ by construction.

       \begin{remark}\label{rem:3.2}
In this regard, we mention the intriguing fact that, in the
case that $R$ is a supersemifield and $U$ is a submodule of the
free $R$-module $V,$ any set of the so-called \bfem{critical
vectors} of $U$ (cf. \cite[Definition 5.17]{IKR-LinAlg}) is the
\bfem{unique minimal} set of generators of the module
$U_1=\sum\limits_{s\in S} Rs,$ up to multiplication of the
elements of $S$ by units of $R,$ \footnote{Of course, assuming
$R^*s\ne R^*t$ for any different $s,t\in S.$} cf. \cite[~Corollary~5.25,
Proposition~5.27]{IKR-LinAlg}.  \{Actually, the result in
\cite[\S5]{IKR-LinAlg} is stronger. It works with a notion of
``tropically spanning sets'', tailored to the needs of
supertropical linear algebra, instead of our sets of generators,
there called ``classically spanning sets''.\} So $V$ contains
many submodules with a canonical set of generators, although these
submodules most often are not free modules.
\end{remark}

To gain enough flexibility, in order to cope with subforms and subpairs,
we introduce more terminology.

\begin{defn}\label{defn:3.3}
Assume that $(q',b')$ and $(q,b)$ are quadratic pairs on
$R$-modules $U'$ and $V,$ respectively. We say that $(q',b')$ is
\bfem{subordinate} to $(q,b)$, and write $(q',b')\prec(q,b),$ if
there exists an $R$-linear map $\vrp :U'\to V$ such that
$$q'=q\circ\vrp,\qquad b'=b\circ(\vrp\times \vrp).$$
We then also say that $(q',b')$ is \bfem{subordinate  to} $(q,b)$
\bfem{by the map} $\vrp$ (or, \textbf{via} $\vrp$), and write
$(q',b')\prec_\vrp(q,b).$ In a similar vein, we say that $q'$ is
\bfem{subordinate to} $q$ (via $\vrp)$, if $q'=q\circ\vrp,$ and
then write $q'\prec q$ or $q'\prec_\vrp q.$
\end{defn}

\begin{comment*}
Clearly, $q'\prec_\vrp q$ (resp. $(q',b')\prec_q(q,b)$)  iff the
map $\vrp: U'\to V$ is isometric with respect to $q'$ and $q$
(resp. $(q',b')$ and $(q,b)$), as defined in \S\ref{sec:2}
(Definitions \ref{defn:2.1} and \ref{defn:2.6}). So the new terms
describe a situation that already appeared in \S\ref{sec:2}, but
they suggest a different perspective than taken in \S\ref{sec:2}.
\end{comment*}

If $\vrp$ is injective, then $q'\prec_\vrp q$ means that $\vrp$ is
an embedding of the quadratic module $(U',q')$ into $(V,q).$
Subforms are special cases of subordinate forms.

\begin{remarks}\label{rem3.4}
Let $\vrp: U'\to V$ be an $R$-linear map.
\begin{enumerate} \eroman \dispace
\item If $q'\prec_\vrp q$ and $q$ is quasilinear, then $q'$ is
quasilinear. Indeed, for $x',y'\in U'$ we have
     \begin{align*} q'(x'+y') & =q(\vrp(x'+y')) \\ & =q(\vrp(x'))+q(\vrp(y')) \\ & =q'(x')+q'(y'). \end{align*}
    \item  If $(q',b')\prec_\vrp(q,b)$ and $(q,b)$ is balanced, then $(q',b')$ is balanced.
    \item If $(q,b)$ has an expansion $B:V\times V\to R,$ then $(q',b')$ has the expansion $B\circ(\vrp\times\vrp).$
        \end{enumerate}
        \end{remarks}

        \begin{schol}\label{schol:3.5}
        Assume that $V$ and $U'$ are free with finite bases $(\veps_i,\, 1\le i \le n)$ and  $(\eta_k,\, 1\le k\le m)$, respectively.  Assume further that the quadratic pair $(q,b)$ on $V$ is balanced. Then a description of $\vrp,$ $(q,b),$ and $(q',b')$ works as follows. We identify a  vector $x=\sum\limits_i^nx_i\veps_i$ in~ $V$ with the column $\begin{pmatrix} x_1\\ \vdots\\ x_n\end{pmatrix}$ and a vector $x'=\sum\limits_k^mx_k'\eta_k$ in $U'$ with the column
$\begin{pmatrix} x_1'\\ \vdots\\ x_m'\end{pmatrix}$. Thus
$\vrp(x)=Sx$ with $S$ an $m\times n$-matrix. We choose an
expansion $B:V\times V\to R$ of $(q,b)$ and identify it with the
$n\times n$-matrix
$$A=(B(\veps_i,\veps_j))_{1\le i,j\le n}.$$
\{Recall that $A$ can be chosen trigonal.\} Then
\begin{equation}\label{eq:3.1}
q(x)= \trn{x}Ax,\qquad
b(x,y)= \trn{x}\big(A+\trn{A}\big)y.
\end{equation}
$(q',b')$ has the expansion $B\circ(\vrp\times\vrp)$ with matrix
$\trn{S}AS,$ and so
\begin{equation}\label{eq:3.3}
q'(x')=\trn{x'}\trn{S}ASx', \qquad
b'(x',y')=\trn{x'}(\trn{S}AS+\trn{S}\trn{A}S)y'.
\end{equation}
\end{schol}

\newpage
\begin{examp}\label{examp:3.6} $ $
\begin{enumerate} \ealph \dispace
  \item
 Assume that $n=m=2,$
$$S=\begin{pmatrix}
 1 & d\\ c& 1\end{pmatrix},\qquad A=\begin{pmatrix} a_{11} & a_{12}\\
 0 & a_{22}\end{pmatrix},$$
 with $c,d\in R$, $a_{ij}\in R.$ We obtain
 $$\trn{S}AS=\begin{pmatrix}[ll] a_{11}+ca_{12}+c^2a_{22},&da_{11}+a_{12}+ca_{22}\ \ \\
 da_{11}+cda_{12}+ca_{22},&a_{22}+da_{12}+d^2a_{11}\end{pmatrix}.
$$
Thus, $q'=[\trn{S}AS]$ (cf. notation \eqref{eq:1.6}) has the
trigonal description
$$q'=\begin{bmatrix}[ll] a_{11}+ca_{12}+c^2a_{22},&a_{12}+2da_{11}+2ca_{22}+cda_{12}\\
& a_{22}+da_{12}+d^2a_{11} \end{bmatrix} . $$

\item In particular, if $a_{12} =0 $, then
$$q'=\begin{bmatrix}[ll] a_{11}+c^2a_{22},&2da_{11}+2ca_{22}\\
& a_{22}+d^2a_{11} \end{bmatrix} . $$ Note that if the form $[1]$
is quasilinear over $R$ (i.e., $(a + b)^2 = a^2 + b^2$), then $q$
is quasilinear, and hence $q'$ is quasilinear  by Remark
\ref{rem3.4}.(i).

\item On the other hand, if $a_{11} = a_{22} =0 $, whence $R$
is rigid (cf. Example \ref{examp:1.7}), and $c,d \neq 0$, then
$$q'=\begin{bmatrix}[ll] ca_{12},&(1+cd)a_{12}\\
& da_{12} \end{bmatrix}.  $$  So often, e.g., when $R$ is
supertropical, $q'$ is not rigid (cf. Remark \ref{rem:1.8}).
\end{enumerate}

\end{examp}

\section{Scalar extension of bilinear forms and of expansive quadratic pairs}\label{sec:4}

In  the sequel, the map  $\lm: R\to R'$ is a semiring homomorphism.
\begin{defn}\label{defn:4.1}
Let $V$ be an $R$-module. A $\lm$-\bfem{scalar extension} of $V$
is an $R'$-module $V'$ together with a $\lm$-linear map $\rho:
V\to V'$ which has the following universal property: For any
$\lm$-linear map $\vrp: V\to W',$ there exists a unique
$R'$-linear map $\psi: V'\to W'$ such that $\psi\circ\rho=\vrp.$
\end{defn}

It is evident that, if such a map $\rho: V\to V'$ exists, then it is
unique in a strong sense. Namely, for any other $\lm$-scalar
extension $\rho_1: V\to V_1'$ there exists a unique $R'$-linear
\textit{isomorphism} $\al: V'\isoto V_1'$ with
$\al\circ\rho=\rho_1.$

 If $R$ and $R'$ are rings, then it is common to obtain $\rho: V\to V'$ by a tensor product construction $V'=R'\otimes_{R,\lm}V,$ but for semirings tensor products can be an intricate matter in general (cf. \cite{QFSym}). So we take a different approach.

 \begin{prop}\label{prop:4.2}
 Assume that $V$ is free with basis $(\veps_i\ds|i\in I)$.  Let $V'$ be a free $R'$-module with basis $(\veps_i' \ds|i\in I)$ (same index set $I).$ Then the $\lm$-semilinear map $\rho: V\to V'$ given by
 $$\rho\Big(\sum_{i\in I} x_i\veps_i\Big):=\sum_{i\in I}\lm(x_i)\veps_i'$$
is a $\lm$-scalar extension of $V.$
\end{prop}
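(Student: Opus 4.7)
The plan is to verify the universal property directly, taking advantage of the fact that both $V$ and $V'$ are free on matched index sets. Since $V$ is free, the formula $\rho\bigl(\sum x_i \varepsilon_i\bigr) := \sum \lambda(x_i)\varepsilon_i'$ makes sense because every element has a unique basis presentation; it is then routine to check that $\rho$ is $\lambda$-linear, using additivity of $\lambda$ for the additive part and multiplicativity of $\lambda$ for the scalar-multiplication part.

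Next, given any $\lambda$-linear map $\varphi : V \to W'$ into an $R'$-module $W'$, I would construct the required $R'$-linear map $\psi : V' \to W'$ by prescribing its values on the basis: set $\psi(\varepsilon_i') := \varphi(\varepsilon_i)$ for each $i \in I$, and extend $R'$-linearly, i.e. $\psi\bigl(\sum y_i \varepsilon_i'\bigr) := \sum y_i \varphi(\varepsilon_i)$. Since $V'$ is free with basis $(\varepsilon_i'\ds|i\in I)$, this defines a unique $R'$-linear map.

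To verify the factorization $\psi\circ\rho = \varphi$, I would compute on a general element $x = \sum x_i \varepsilon_i \in V$:
\begin{equation*}
\psi(\rho(x)) \;=\; \psi\Bigl(\sum_{i\in I}\lambda(x_i)\varepsilon_i'\Bigr) \;=\; \sum_{i\in I}\lambda(x_i)\varphi(\varepsilon_i) \;=\; \varphi\Bigl(\sum_{i\in I} x_i \varepsilon_i\Bigr) \;=\; \varphi(x),
\end{equation*}
where the third equality is precisely the $\lambda$-linearity of $\varphi$. Uniqueness of $\psi$ is then immediate: if $\psi_1 : V' \to W'$ is any $R'$-linear map with $\psi_1 \circ \rho = \varphi$, then $\psi_1(\varepsilon_i') = \psi_1(\rho(\varepsilon_i)) = \varphi(\varepsilon_i) = \psi(\varepsilon_i')$ for every $i\in I$, and two $R'$-linear maps out of a free module agreeing on a basis must coincide.

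There is really no hard step here; the proof is a bookkeeping exercise once one notes that the freeness of $V$ is used to define $\rho$ consistently and the freeness of $V'$ is used both to define $\psi$ and to get uniqueness. The only mild subtlety to keep in mind is that, in the semiring setting, one cannot appeal to tensor products, so the factorization must be built by hand on bases; but the matched basis construction sidesteps this cleanly.
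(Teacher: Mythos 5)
Your proposal is correct and follows essentially the same route as the paper: define $\psi$ on the basis by $\psi(\veps_i'):=\vrp(\veps_i)$, extend $R'$-linearly, check $\psi\circ\rho=\vrp$ via the $\lm$-linearity of $\vrp$, and obtain uniqueness because the $\veps_i'=\rho(\veps_i)$ lie in $\rho(V)$, which generates $V'$. The paper states these steps more tersely ("Clearly, $\psi$ is $R'$-linear and $\psi\circ\rho=\vrp$"), but the argument is the same.
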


\begin{proof}
Given a $\lm$-linear map $\vrp: V\to W'$, we define the map $\psi: V'\to
W'$ by
$$\psi\Big(\sum_{i\in I} x_i'\veps_i'\Big):=\sum_{i\in I} x_i'\vrp(\veps_i).$$
Clearly, $\psi$ is $R'$-linear and $\psi\circ\rho=\vrp.$ Since
$\rho(V)$ generates the $R'$-module $V',$ $\psi$ is the unique
such map.
\end{proof}

We start proving (in a nonconstructive way) that a
$\lm$-scalar extension $\rho: V\to V'$ exists for every $R$-module
$V.$

\begin{defn}\label{defn:4.3}
An equivalence relation $E$ on an $R$-module
$V$ is called $R$-\bfem{linear}, if for  $x_1,x_2\in V$ the following hold:
\begin{enumerate} \eroman \dispace
\item $x_1\sim_Ex_2\Rightarrow x_1+y\sim_Ex_2+y$ for any $y\in
V$ (whence $x_1\sim_E x_2,$ $y_1\sim_E y_2\Rightarrow
x_1+y_1\sim_Ex_2+y_2),$
    \item $x_1\sim_Ex_2,c\in R\Rightarrow cx_1\sim_Ecx_2.$
    \end{enumerate}
    \end{defn}
  \begin{remark}\label{rem:4.4}
    If $E$ is $R$-linear, then we can equip the set $V/E$ of $E$-equivalence classes with the structure of an $R$-module such that the following holds:
    $$[x]_E+[y]_E:=[x+y]_E,\qquad c[x]_E=[cx]_E.$$
    This is the unique $R$-module structure on $V/E$ for which  the map $$\pi_E:V\To V/E$$ is $R$-linear.
    \end{remark}

    \begin{thm}\label{thm:4.5}
    Let $\lm: R\to R'$ be a semiring homomorphism and $V$ be an $R$-module. Then there exists a $\lm$-scalar extension $\rho: V\to V'.$ The $R'$-module $V'$ is generated by the set $\rho(V).$
    \end{thm}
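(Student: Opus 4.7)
The plan is to construct $V'$ as the quotient of a large free $R'$-module by the smallest $R'$-linear equivalence relation that forces $\lm$-linearity of the candidate map. First, form the free $R'$-module
$$F := \bigoplus_{v \in V} R' \cdot e_v$$
on formal symbols $(e_v \mid v \in V)$. Let $\mathcal{E}$ be the collection of all $R'$-linear equivalence relations on $F$ (in the sense of Definition \ref{defn:4.3} applied to the semiring $R'$) that contain the three families of pairs $(e_{v+w},\, e_v + e_w)$, $(e_{cv},\, \lm(c)\, e_v)$, and $(e_{0_V},\, 0_F)$, ranging over all $v,w \in V$ and $c \in R$. The total relation $F \times F$ lies in $\mathcal{E}$, so $\mathcal{E}\ne\emptyset$; and since an arbitrary intersection of $R'$-linear equivalence relations is again $R'$-linear, $E_0 := \bigcap \mathcal{E}$ is the least member of $\mathcal{E}$.

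Set $V' := F/E_0$, equipped with the $R'$-module structure from Remark \ref{rem:4.4}, and define $\rho(v) := [e_v]_{E_0}$. The three defining families for $E_0$ translate at once into
$$\rho(v+w)=\rho(v)+\rho(w),\qquad \rho(cv)=\lm(c)\rho(v),\qquad \rho(0_V)=0_{V'},$$
so $\rho : V \to V'$ is $\lm$-linear. Since the $e_v$ generate $F$ as an $R'$-module, their classes $\rho(v) = [e_v]_{E_0}$ generate $V'$; this already yields the second assertion of the theorem.

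For the universal property, let $\vrp : V \to W'$ be a $\lm$-linear map. Because $F$ is free on $(e_v)$, there exists a unique $R'$-linear map $\tlpsi : F \to W'$ with $\tlpsi(e_v) = \vrp(v)$ for every $v \in V$. Consider
$$E_\vrp := \{(u_1, u_2) \in F \times F \mid \tlpsi(u_1) = \tlpsi(u_2)\},$$
which is plainly an $R'$-linear equivalence relation on $F$. The $\lm$-linearity of $\vrp$ is exactly what guarantees that each of the three generating families of $E_0$ is contained in $E_\vrp$; hence $E_0 \subseteq E_\vrp$, and $\tlpsi$ descends to an $R'$-linear map $\psi : V' \to W'$ with $\psi\circ\rho = \vrp$. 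Uniqueness of $\psi$ is forced by the equality $\psi(\rho(v)) = \vrp(v)$ together with the fact that $\rho(V)$ generates $V'$ as an $R'$-module.

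The main point to be verified carefully is that an arbitrary intersection of $R'$-linear equivalence relations is again $R'$-linear, which is a direct check against the two clauses of Definition \ref{defn:4.3}; and that the $\lm$-linearity hypothesis on $\vrp$ is precisely what places the three generating families of $E_0$ inside $E_\vrp$. The construction is deliberately non-constructive, as $E_0$ is described only as an intersection rather than by explicit equivalence classes, in line with the approach announced in \S\ref{sec:4}.
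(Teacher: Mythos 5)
Your proof is correct. The core device is the same as the paper's: realize $V'$ as a quotient of a free $R'$-module by the least $R'$-linear equivalence relation containing a prescribed set of pairs, with existence of that least relation secured by intersecting the (nonempty, intersection-closed) family of all admissible relations. Where you differ is in the choice of free cover and of generating pairs: the paper picks an arbitrary free $R$-module presentation $\pi:\tlV\twoheadrightarrow V$, invokes Proposition \ref{prop:4.2} to get the scalar extension $\tlrho:\tlV\to\tlV'$ of the free module, and then kills the image $(\tlrho\times\tlrho)(E(\pi))$ of the kernel relation of $\pi$; you instead take the free $R'$-module on the underlying \emph{set} of $V$ and impose the three families of relations that directly encode $\lm$-linearity of $v\mapsto [e_v]$. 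Your route is self-contained (it never uses Proposition \ref{prop:4.2}, and in fact reproves the free case as a byproduct) and makes the verification of the universal property slightly more transparent, since $\lm$-linearity of $\vrp$ visibly matches your three generating families; the paper's route buys a construction that sits naturally over any free presentation of $V$ and reuses the already-established free-module case. One small remark: your third family $(e_{0_V},0_F)$ is redundant, since $\rho(0_V)=\rho(0_R\cdot 0_V)=\lm(0_R)\rho(0_V)=0$ already follows from the second family, but including it is harmless.
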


    \begin{proof}
    We choose a free $R$-module $\tlV$ together with a surjective $R$-linear map $\pi: \tlV\twoheadrightarrow V.$ By \propref{prop:4.2}, there exists a $\lm$-scalar extension $\tlrho: \tlV\to\tlV'$ (and $\tlV'$ is a free $R'$-module). Let $E(\pi)$ denote the equivalence relation on $\tlV$ determined by the map $\pi$. It consists of all pairs $(x_1,x_2)\in\tlV\times \tlV$ with $\pi(x_1)=\pi(x_2).$ \footnote{As usual, we view any binary relation on a set $S$ as a subset of $S\times S.$} $E(\pi)$ provides us with the subset $(\tlrho\times \tlrho)(E(\pi))$ of $\tlV'\times\tlV'.$ Let $\mathcal M$ denote the set of all $R'$-linear equivalence relations on $\tlV'$ containing this subset of $\tlV'\times \tlV'.$  $\mathcal M$ is not empty, since the set $\tlV'\times \tlV'$ is a member of $\mathcal M$ (all elements of $\tlV'$ equivalent). Let $E$ denote the intersection $\bigcap \mathcal M.$ It is the finest $R'$-linear  equivalence relation on $\tlV'$ such that
    $$\pi(x_1)=\pi(x_2)\dss \Rightarrow \tlrho(x_1)\sim\tlrho(x_2)$$
    for all $x_1,x_2\in\tlV'.$ The map $\pi_E\circ\tlrho:\tlV'\to\tlV'/E$ induces a well-defined map
    $$\rho: V\To\tlV'/E$$
    with
    \begin{equation}\label{eq:4.1}
    \rho(\pi(x))=[\tlrho(x)]_E=\pi_E(\tlrho(x))
    \end{equation}
    for all $x\in V.$ \footnote{See diagram \eqref{eq:4.2} below.}

    We claim that $\rho$ is a $\lm$-scalar extension of $V.$ Let $\vrp:V\to W'$ be any $\lm$-linear map of $V.$ Then $\vrp\circ\pi:\tlV\to W'$ is again $\lm$-linear. Since $\tlrho$ is a $\lm$-scalar extension of~ $\tlV,$ there exists a unique $R'$-linear map $\tlpsi:\tlV'\to W'$ with $\tlpsi\circ\tlrho=\vrp\circ\pi.$ If $x_1,x_2\in\tlV$ and $\pi(x_1)=\pi(x_2),$ then $$\tlpsi\tlrho(x_1)=\vrp\pi(x_1)=\vrp\pi(x_2)=\tlpsi\tlrho(x_2),$$ whence $(\tlrho(x_1),\tlrho(x_2))\in E(\tlpsi).$ Thus $E(\tlpsi)\in\mathcal M.$ We conclude that $E(\tlpsi)\supset E,$ which implies that there is a unique map $\psi:\tlV'/E\to V'$ with $\psi\circ\pi_E=\tlpsi.$ Therefore, by \eqref{eq:4.1},
    $$\vrp\circ\pi=\tlpsi\circ\tlrho=\psi\circ\pi_E\circ\tlrho=\psi\circ\rho\circ\pi.$$
    Since $\pi$ is surjective, it follows that $\vrp=\psi\circ\rho.$ We have obtained a commuting diagram
    \begin{equation}\label{eq:4.2}
    \begin{gathered}
    \xymatrix{
    \tlV \ar@{>>}[d]^\pi
    \ar[r]^{\tlrho}
    &   \tlV'   \ar@{>>}[d]^{\pi_E}  \ar@/^1pc/[dd]<2ex> ^{\tlpsi}
     \\
        V\ar[r]^\rho\ar[dr]_\vrp & \tlV'/E  \ar@{-->}^{\psi}[d]   \\
      &  W'
    }
    \end{gathered}
    \end{equation}

     The set $\rho(V)$ generates the $R'$-module $\tlV'/E$ since apparently $\tlrho(\tlV)$ generates the $R'$-module $\tlV'.$ Thus $\psi$ is unique with $\psi\circ\rho=\vrp.$  This completes the proof that $\rho: V\to\tlV'/E$ is a $\lm$-scalar extension of $V.$
    \end{proof}

    \begin{notation}\label{notation:4.6}
    We fix one $\lm$-scalar extension $\rho: V\to V'$ and denote it by
    $$\rho_V^\lm: V\To V_\lm,$$
    often writing $\rho_V:V\to V_{R'}$ or $\rho:V\to V_{R'}$ instead, if it is clear from the context which  homomorphism $\lm : R \to R'$ is  under consideration.
     \end{notation}

    We state an immediate consequence of \thmref{thm:4.5}.

    \begin{cor}\label{cor:4.7}
    Given an $R$-linear map $f:V\to W$, there exists a unique $R'$-linear map $f': V_\lm\to W_\lm$ such that the diagram
    $$\xymatrix{
    V \ar[r]^f \ar[d]_{\rho_V^\lm} \ar[r]^f & W\ar[d]^{\rho_W^\lm}\\
    V_\lm\ar[r]_{f'}&W_\lm
    }
    $$
    commutes. If $f$ is a linear isomorphism, then so is $f'.$
     \end{cor}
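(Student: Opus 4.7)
The plan is to invoke the universal property of Definition \ref{defn:4.1} twice: once to produce $f'$ and once more to establish that it is an isomorphism when $f$ is. No computation beyond chasing the diagram should be needed.

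First, I would note that the composition $\rho_W^\lm \circ f : V \to W_\lm$ is $\lm$-linear, since $f$ is $R$-linear and $\rho_W^\lm$ is $\lm$-linear by construction. Therefore, applying the universal property of the $\lm$-scalar extension $\rho_V^\lm : V \to V_\lm$ to the $\lm$-linear map $\rho_W^\lm \circ f$, there exists a \emph{unique} $R'$-linear map $f' : V_\lm \to W_\lm$ such that $f' \circ \rho_V^\lm = \rho_W^\lm \circ f$. This is exactly the required existence and uniqueness of $f'$ making the square commute.

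For the last assertion, suppose $f$ is an $R$-linear isomorphism. Applying the first part to $f^{-1}: W \to V$ yields a unique $R'$-linear map $(f^{-1})' : W_\lm \to V_\lm$ with $(f^{-1})' \circ \rho_W^\lm = \rho_V^\lm \circ f^{-1}$. Composing the two commuting squares gives
\[
(f^{-1})' \circ f' \circ \rho_V^\lm \;=\; (f^{-1})' \circ \rho_W^\lm \circ f \;=\; \rho_V^\lm \circ f^{-1} \circ f \;=\; \rho_V^\lm \;=\; \mathrm{id}_{V_\lm} \circ \rho_V^\lm.
\]
Now both $(f^{-1})' \circ f'$ and $\mathrm{id}_{V_\lm}$ are $R'$-linear self-maps of $V_\lm$ whose precomposition with $\rho_V^\lm$ equals the $\lm$-linear map $\rho_V^\lm$; so the uniqueness clause of the universal property (applied with $\vrp = \rho_V^\lm$ and $W' = V_\lm$) forces them to agree. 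The symmetric argument shows $f' \circ (f^{-1})' = \mathrm{id}_{W_\lm}$, so $f'$ is an $R'$-linear isomorphism with inverse $(f^{-1})'$.

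No genuine obstacle arises here; the whole argument is a formal consequence of the universal property established in Theorem \ref{thm:4.5}. The only point that requires a touch of care is the appeal to uniqueness in the isomorphism step, where one must recognize that the identity on $V_\lm$ is itself characterized (among $R'$-linear endomorphisms) by the property that its precomposition with $\rho_V^\lm$ returns $\rho_V^\lm$.
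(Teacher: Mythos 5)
Your argument is correct and is precisely the intended one: the paper states this corollary as an immediate consequence of Theorem \ref{thm:4.5} without writing out a proof, and the universal-property diagram chase you give (including the uniqueness argument identifying $(f^{-1})'\circ f'$ with $\mathrm{id}_{V_\lm}$) is the standard way to make that "immediate consequence" explicit. Nothing is missing.
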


\begin{defn}\label{defn:4.8}
We call $f':V_\lm\to W_\lm$ the $\lm$-\bfem{scalar extension} of
$f,$ and write $f'=f_{R'}$, or more precisely $f'=f_\lm.$
\end{defn}

\begin{remark}\label{rem:4.9}
If $V$ and $W$ are $R$-modules, then $\Hom_R(V,W)$ is an
$R$-module in the obvious way: For $f\in\Hom_R(V,W)$, $c\in R,$
$v\in V:$
$$(c \cdot f)(v):= c  (f(v))=f( c v).$$
\{We use the assumption that $R$ is commutative.\}\\
The scalar extension map $f\mapsto f_{R'}$ from $\Hom_R(V,W)$ to
$\Hom_{R'}(V_{R'},W_{R'})$ is $\lm$-linear, as is easily checked.
In particular (take  $W=R$), we have a $\lm$-linear map
$$\sig:\Hom_R(V,R)\To\Hom_{R'}(V_{R'},R'),$$
with $\sig(f):=f_{R'}$ for $f\in\Hom_R(V,R).$ Note that, with
$\rho:=\rho^\lm _V,$
\begin{equation}\label{eq:4.3}
\sig(f)(\rho(x))=\lm(f(x))
\end{equation}
for all $x\in V.$
\end{remark}

We next establish scalar extensions for bilinear forms on an
$R$-module $V.$ As before, we fix a semiring homomorphism $\lm:
R\to R',$ and then have the scalar extension
$$\rho:=\rho^\lm_V: V\To V_{R'}$$
of $V$.

\begin{thm}\label{thm:4.10}
Given an $R$-bilinear form $B:V\times V\to R$, there exists a
unique $R'$-bilinear form $B':V_{R'}\times V_{R'}\to R'$ with
\begin{equation}\label{eq:4.4.0}B'\circ(\rho\times\rho)=\lm\circ B,
\end{equation}
where $\rho=\rho^\lm_V.$
\end{thm}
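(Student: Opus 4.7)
The plan is to build $B'$ by applying the universal property of scalar extension twice --- once in each variable --- and then to establish uniqueness from the fact that $\rho(V)$ generates the $R'$-module $V_{R'}$ (\thmref{thm:4.5}).

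First I would dispose of uniqueness: any $R'$-bilinear form on $V_{R'}$ is determined by its values on $\rho(V) \times \rho(V)$, since for fixed $v \in V_{R'}$ the map $B'(-,v)$ is $R'$-linear, hence determined by its restriction to the generating set $\rho(V)$, and symmetrically in the second slot. So if $B'_1$ and $B'_2$ both satisfy \eqref{eq:4.4.0}, they agree on $\rho(V)\times\rho(V)$ and therefore coincide.

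For existence, fix $y \in V$. The map $B(-,y): V \to R$ is $R$-linear, so $\lm \circ B(-,y): V \to R'$ is $\lm$-linear. By the universal property of the $\lm$-scalar extension $\rho: V \to V_{R'}$ (Definition \ref{defn:4.1}), there is a unique $R'$-linear map $\beta_y: V_{R'} \to R'$ with $\beta_y \circ \rho = \lm \circ B(-,y)$. I would then check that the assignment $y \mapsto \beta_y$, viewed as a map from $V$ to the $R'$-module $\Hom_{R'}(V_{R'}, R')$ (an $R'$-module via Remark \ref{rem:4.9}), is $\lm$-linear. For additivity, $\beta_{y_1} + \beta_{y_2}$ and $\beta_{y_1 + y_2}$ both extend the $\lm$-linear map $\lm \circ B(-, y_1 + y_2)$ along $\rho$, so they agree by the uniqueness clause; scaling is analogous, using $B(-, cy) = c\, B(-, y)$ and $\lm(c) \in R'$.

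Applying the universal property of $\rho$ a second time yields a unique $R'$-linear map $\widetilde\beta: V_{R'} \to \Hom_{R'}(V_{R'}, R')$ with $\widetilde\beta(\rho(y)) = \beta_y$ for every $y \in V$. I would then define
$$B'(u,v) := \widetilde\beta(v)(u) \qquad (u,v \in V_{R'}).$$
Then $B'$ is $R'$-linear in the first argument because $\widetilde\beta(v) \in \Hom_{R'}(V_{R'},R')$, and it is $R'$-linear in the second argument since $B'(u,-)$ equals the composition of the $R'$-linear map $\widetilde\beta$ with the $R'$-linear evaluation $\mathrm{ev}_u: \Hom_{R'}(V_{R'},R') \to R'$. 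Finally, for $x,y \in V$,
$$B'(\rho(x), \rho(y)) = \widetilde\beta(\rho(y))(\rho(x)) = \beta_y(\rho(x)) = \lm(B(x,y)),$$
which is \eqref{eq:4.4.0}. The only mildly delicate point is verifying that $y \mapsto \beta_y$ is $\lm$-linear into $\Hom_{R'}(V_{R'},R')$, but this is just a two-line uniqueness argument; no essential obstacle arises.
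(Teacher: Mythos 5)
Your proposal is correct and follows essentially the same route as the paper: the paper curries $B$ into $\vrp_B:V\to\Hom_R(V,R)$, composes with the $\lm$-linear scalar-extension map $\sig:\Hom_R(V,R)\to\Hom_{R'}(V_{R'},R')$ of Remark \ref{rem:4.9} (your $\beta_y$ is exactly $\sig(B(-,y))$ by \eqref{eq:4.3}), factors the resulting $\lm$-linear map through $\rho$ via the universal property, and uncurries; uniqueness is likewise obtained from $\rho(V)$ generating $V_{R'}$. The only difference is that you re-derive the first-variable extension and its $\lm$-linearity by hand instead of quoting Remark \ref{rem:4.9}.
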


\begin{proof}
The bilinear form $B:V\times V\to R$ corresponds uniquely to a
linear map
$$\vrp_B:V\To \Hom(V,R),$$
defined by
\begin{equation}\label{eq:4.4}
\vrp_B(y)(x)=B(x,y)
\end{equation}
for $x,y\in V.$ \{We have chosen the ``righthand'' convention\} We
 write  briefly $\vrp = \vrp_B$ and  $V'=V_{R'}.$ Composing
$\vrp$ with the $\lm$-linear map
$$\sig:\Hom_R(V,R)\To\Hom_{R'}(V',R')$$ (cf. Remark
\ref{rem:4.9}), we obtain a $\lm$-linear map
$$\sig\circ\vrp:V\To\Hom_{R'}(V',R').$$
Thus $\sig\circ\vrp=\psi\circ\rho$ with a unique $R'$-linear map
$$\psi: V'\To \Hom_{R'}(V',R').$$ The map $\psi$ corresponds to an
$R'$-bilinear form $B':V'\times V'\to R,$ with
$$B'(x',y')=\psi(y')(x')$$
for $x',y'\in V'.$ For  $x,y\in V,$ by use of
\eqref{eq:4.3}, there with $f=\vrp(y),$ and \eqref{eq:4.4},  we obtain  that
 \begin{align*} B'(\rho(x),\rho(y)) & =((\psi\circ\rho)(y))(\rho(x)) \\ & =((\sig\circ\vrp)(y))(\rho(x)) \\& =\lm(\vrp(y)(x)) \\& =\lm(B(x,y)), \end{align*}
as desired. Uniqueness of $B'$ is evident, since $\rho(V)$ generates $V'=V_{R'}$ (cf. \thmref{thm:4.5}).
\end{proof}

\begin{defn}\label{defn:4.11}
We call $B'$ the $\lm$-\bfem{scalar extension of the bilinear
form} $B,$ and write $B'=B_{R'}$ (or, when necessary, $B'=B_\lm).$
\end{defn}

\begin{example}\label{examp:4.12}
Assume that $V$ is a free $R$-module with basis $(\veps_i\ds|i\in
I)$. Then, as we know, $V_{R'}$ is a free $R'$-module with basis
$(\rho(\veps_i)\ds |i\in I).$ A bilinear form $B$ on $V$ is given
by an $I\times I$-matrix $(\gm_{ij}|(i,j)\in I\times I)$,
$\gm_{ij}=B(\veps_i,\veps_j).$ It follows that $B_{R'}$ is given
by the matrix $(\lm(\gm_{ij})|(i,j)\in I\times I).$
\end{example}

\begin{prop}[Functoriality of $B \rightsquigarrow B_{R'}$]\label{prop:4.13}
If $R$-bilinear forms $$B:V\times V\To R,
\qquad B_1:U\times U\To R,$$ and an $R$-linear map $\vrp: U\to V$
with $B_1=B\circ(\vrp\times\vrp)$ are given,  then
$$(B_1)_{R'}=B_{R'}\circ(\vrp_{R'}\times\vrp_{R'}).$$
\end{prop}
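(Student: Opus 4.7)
The plan is to verify the identity by appealing to the uniqueness clause of \thmref{thm:4.10}. Concretely, \thmref{thm:4.10} characterizes $(B_1)_{R'}$ as the \emph{unique} $R'$-bilinear form on $U_{R'}\times U_{R'}$ satisfying
$$ (B_1)_{R'}\circ(\rho_U\times\rho_U)=\lm\circ B_1,$$
where $\rho_U:=\rho_U^\lm$; uniqueness holds because $\rho_U(U)$ generates the $R'$-module $U_{R'}$ (\thmref{thm:4.5}). So it suffices to show that the $R'$-bilinear form $C':=B_{R'}\circ(\vrp_{R'}\times\vrp_{R'})$ satisfies the same equation with $B_1$ on the right-hand side.

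First I would note that $C'$ is indeed $R'$-bilinear, being the composition of the $R'$-bilinear form $B_{R'}$ with the $R'$-linear map $\vrp_{R'}$ in each argument. Next I would invoke \corref{cor:4.7}, which gives the commuting square
$$\vrp_{R'}\circ\rho_U=\rho_V\circ\vrp,$$
together with \thmref{thm:4.10} applied to $B$, giving $B_{R'}\circ(\rho_V\times\rho_V)=\lm\circ B$. Combining these for arbitrary $x,y\in U$ yields
\begin{align*}
C'(\rho_U(x),\rho_U(y)) &= B_{R'}\bigl(\vrp_{R'}(\rho_U(x)),\,\vrp_{R'}(\rho_U(y))\bigr)\\
&= B_{R'}\bigl(\rho_V(\vrp(x)),\,\rho_V(\vrp(y))\bigr)\\
&= \lm\bigl(B(\vrp(x),\vrp(y))\bigr)\\
&= \lm(B_1(x,y)),
\end{align*}
where the last step uses the hypothesis $B_1=B\circ(\vrp\times\vrp)$. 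Thus $C'\circ(\rho_U\times\rho_U)=\lm\circ B_1$, so by the uniqueness clause of \thmref{thm:4.10} we conclude $C'=(B_1)_{R'}$, as desired.

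There is essentially no obstacle here; the argument is a diagram chase that rests entirely on three previously established facts: the existence/uniqueness of scalar extensions of bilinear forms (\thmref{thm:4.10}), the functoriality of linear maps under scalar extension (\corref{cor:4.7}), and the fact that $\rho_U(U)$ generates $U_{R'}$ as an $R'$-module. The only mild point worth mentioning is that one must remember to check $R'$-bilinearity of $C'$ explicitly before invoking uniqueness, since uniqueness in \thmref{thm:4.10} is asserted within the class of $R'$-bilinear forms.
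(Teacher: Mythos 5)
Your proposal is correct and follows essentially the same route as the paper: both verify that $B_{R'}\circ(\vrp_{R'}\times\vrp_{R'})$ precomposed with $\rho_U\times\rho_U$ equals $\lm\circ B_1$, using $\vrp_{R'}\circ\rho_U=\rho_V\circ\vrp$ and the defining equation \eqref{eq:4.4.0} of $B_{R'}$, and then conclude by the uniqueness clause of \thmref{thm:4.10}. Your explicit remark about checking $R'$-bilinearity of the candidate form is a reasonable extra precaution that the paper leaves implicit.
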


\begin{proof}
Let $\vrp':=\vrp_{R'},$ $\rho:=\rho^\lm_V,$ $\rho_1:=\rho_U^\lm,$
$B':=B_{R'}.$ Then $\vrp'\circ\rho_1=\rho\circ\vrp,$ and thus
\begin{align*}
B'\circ(\vrp'\times \vrp')\circ(\rho_1\times\rho_1)&=B'\circ(\vrp'\rho_1\times\vrp'\rho_1) \\ & =B'\circ(\rho\vrp\times\rho\vrp)\\
&\underset{\eqref{eq:4.4.0}}{=} B'\circ(\rho\times \rho)\circ(\vrp\times\vrp)\\
&  = \lm \circ B\circ(\vrp\times\vrp) \\ & = \lm \circ B_1.
\end{align*}
This proves that $(B_1)_{R'}=B'\circ(\vrp'\times\vrp'),$ as
desired.
\end{proof}

\begin{prop}\label{prop:4.14}
Assume that $B_1$ and $B_2$ are bilinear forms on an $R$-module
$V$ which expand the same quadratic pair $(q,b)$ on $V,$ i.e.,
$$q(x)=B_1(x,x)=B_2(x,x)$$
for all $x\in V$, and
$$b=B_1+(B_1)^\ont =B_2+(B_2)^\ont.$$
Then the $\lm$-scalar extensions $(B_1)_{R'}$ and $(B_2)_{R'}$
expand the quadratic pairs $(q_1',b_1'),$ $(q_2',b_2')$ on~$V_{R'}$, and these  are again equal.
\end{prop}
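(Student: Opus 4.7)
The strategy is to exploit the characterizing property of scalar extension for bilinear forms (Theorem \ref{thm:4.10}), namely that $(B_i)_{R'}$ is the unique $R'$-bilinear form on $V_{R'}$ whose pullback along $\rho\times\rho$ (with $\rho:=\rho_V^\lm$) equals $\lm\circ B_i$, together with the fact that $\rho(V)$ generates $V_{R'}$ as an $R'$-module. I would first dispose of the companions, which is the easier half, and then propagate equality of the quadratic forms from the generating set $\rho(V)$ to all of $V_{R'}$.

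First I would compute, for $x,y\in V$,
\[
b_i'(\rho(x),\rho(y))=(B_i)_{R'}(\rho(x),\rho(y))+(B_i)_{R'}(\rho(y),\rho(x))=\lm(B_i(x,y))+\lm(B_i(y,x))=\lm(b(x,y)),
\]
where in the last step I use that $b=B_1+B_1^\ont=B_2+B_2^\ont$ by hypothesis. Thus both $b_1'$ and $b_2'$ are $R'$-bilinear forms on $V_{R'}$ satisfying $b_i'\circ(\rho\times\rho)=\lm\circ b$, so by the uniqueness clause of Theorem \ref{thm:4.10} (applied to $b$) we obtain $b_1'=b_2'$ (and both equal the scalar extension $b_{R'}$).

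Next I would treat the quadratic forms. Evaluating on a generator $\rho(x)$ gives
\[
q_i'(\rho(x))=(B_i)_{R'}(\rho(x),\rho(x))=\lm(B_i(x,x))=\lm(q(x)),
\]
which is independent of $i$. To promote this to equality on all of $V_{R'}$, I would apply Corollary \ref{cor:2.3} to the generating system $\{\rho(x)\mid x\in V\}$ of $V_{R'}$: for any $\rho(x),\rho(y)\in\rho(V)$ and any $a,c\in R'$, the values $q_i'(\rho(x))$, $q_i'(\rho(y))$ are independent of $i$ by the above, and $b_i'(\rho(x),\rho(y))$ is independent of $i$ by the first step, hence using the quadratic-pair identity
\[
q_i'(a\rho(x)+c\rho(y))=a^2 q_i'(\rho(x))+c^2 q_i'(\rho(y))+ac\,b_i'(\rho(x),\rho(y))
\]
we see that $q_1'$ and $q_2'$ agree on $R'\rho(x)+R'\rho(y)$. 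Corollary \ref{cor:2.3} then yields $q_1'=q_2'$.

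The only potential pitfall is the step that promotes pointwise agreement on the (non-free) generating set $\rho(V)$ to an equality of quadratic forms on $V_{R'}$; this is precisely what Corollary \ref{cor:2.3} is tailored for, once equality of the companions $b_1'=b_2'$ has been secured. Everything else is a routine unravelling of the definitions of $(B_i)_{R'}$, $q_i'$, and $b_i'$.
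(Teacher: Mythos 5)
Your proof is correct and follows essentially the same route as the paper: first secure $b_1'=b_2'$, then propagate $q_1'=q_2'$ from the generating set $\rho(V)$ of $V_{R'}$ via the two-generator reduction. The only cosmetic differences are that the paper derives $b_1'=b_2'$ from additivity of $B\mapsto B_{R'}$ (i.e.\ $(B_1)_{R'}+(B_1^\ont)_{R'}=(B_1+B_1^\ont)_{R'}$) where you invoke the uniqueness clause of Theorem~\ref{thm:4.10} directly, and that the paper writes out formula \eqref{eq:1.9} where you cite Corollary~\ref{cor:2.3}; both substitutions are sound.
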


\begin{proof} We have
$$b_1'=(B_1)_{R'}+(B_1^\ont)_{R'}=(B_1+B_1^\ont)_{R'}=(B_2+B_2^\ont)_{R'}=b_2'.$$
Let $(\veps_i\ds|i\in I)$ be a system of generators of the
$R$-module $V$ and put $\veps_i':=\rho(\veps_i).$ Then
$(\veps_i'\ds|i\in I)$ is a system of generators of $V_{R'}.$ We
choose a total ordering on $I$. For any $i,j\in I$, we have
$b_1'(\veps_i',\veps_j')=b_2'(\veps_i',\veps_j')$ and
$$q_1'(\veps_i')=(B_1)_{R'}(\veps_i',\veps_i')=\lm(B_1(\veps_i,\veps_i))=
\lm(B_2(\veps_i,
\veps_i))=(B_2)_{R'}(\veps_i',\veps_i')=q_2'(\veps_i).$$ We
conclude that. for any $z=\sum\limits_{i\in I}z_i\veps_i'\in V'$
(all $z_i\in R',$ almost all $z_i=0)$,
\begin{align*}
q_1'(z)&=\sum_{i\in I}z_i^2q_1'(\veps_i)+\sum_{i<j} z_iz_jb_1'(\veps_i,\veps_j)\\
&=\sum_{i\in I} z_i^2q_2'(\veps_i)+\sum_{i<j} z_iz_jb_2'(\veps_i,\veps_j)\ =q_2'(z).
\end{align*}
Thus $q_1'=q_2'.$
\end{proof}

\begin{defn}\label{defn:4.15}
Assume that $(q,b)$ is an expansive quadratic pair on an
$R$-module $V.$ We choose an expansion $B:V\times V\to R$ of
$(q,b).$ Let $(q',b')$ be the quadratic pair on $V_{R'}$, expanded
by the $\lm$-scalar extension $B_{R'}$ on $V_{R'}.$ This pair does
not depend on the choice of~ $B,$ as just proved. We call $(q',b')$
the $\lm$-\bfem{scalar expansion of} $(q,b)$, and call $q'$ the
\textbf{$\lm$-scalar expansion of the expansive quadratic form}
$q.$ We write $q'=q_{R'}$, $b'=b_{R'},$ or, when necessary,
$q'=q_\lm,$ $b'=b_\lm.$
\end{defn}

\begin{schol}\label{scol:4.16}
Assume that $(\veps_i\ds|i\in I)$ is a system of generators of an
$R$-module $V.$ Let ~$(q,b)$ be an expansive quadratic pair on $V.$
Put
$$q(\veps_i)=\al_i,\qquad b(\veps_i,\veps_j)=\bt_{ij},$$
and $\veps_i':=\rho_V(\veps_i).$ Then $(\veps_i'\ds|i\in I)$ is a
system of generators of the $R'$-module $V_{R'},$ and
$$q_{R'}(\veps_i')=\lm(\al_i),\qquad b_{R'}(\veps_i',\veps_j')=\lm(\bt_{ij}).$$
\end{schol}

\begin{examp}\label{examp:4.17}
Assume that $\lm:R\to R'$ is surjective. Then, for any $R$-module
$V$,  the $\lm$-linear map $\rho_V^\lm:V\to V_{R'}$ is again
surjective, since the $R'$-module $V_{R'}$ is generated by
$\rho_V^\lm(V),$ and $R'=\lm(R).$ Furthermore, if $(q,b)$ is a
quadratic pair on $V_{R'},$ then $(q',b')$ is the $\lm$-scalar
extension $(q_{R'},b_{R'})$ of $(q,b)$ iff $(q,b)$ is a lifting of
$(q',b')$ with respect to $(\lm,\rho_V^\lm),$ cf. Definition \ref{defn:1.10}.
\end{examp}

\section{$\lm$-isometric maps}\label{sec:5}

In what follows $\lm:R\to R'$ is a homomorphism from a
semiring $R$ to a semiring $R'.$

\begin{defn}\label{defn:5.1} Assume that $V$ is an $R$-module and $V'$ is an $R'$-module. If $q: V\to R$ and $q':V'\to R'$ are quadratic forms on $V$ and $V',$ respectively, then a $\lm$-linear map $\vrp: V\to V'$ is called $\lm$-\bfem{isometric}, with respect to $q$ and $q'$,  if
\begin{equation}\label{eq:5.1}
q'(\vrp(x))=\lm(q(x))
\end{equation}
 for all $x\in V$.

Given companions $b$ and $b'$ of $q$ and $q'$, we say that $\vrp$
is \bfem{isometric with respect to the quadratic pairs} $(q,b)$
and $(q',b')$, if in addition
\begin{equation}\label{eq:5.2}
b'(\vrp x ,\vrp y)=\lm(b(x,y))
\end{equation}
for all $x,y\in V$.
\end{defn}

Note that in the case that $R'=R,$ $\lm=\id_R,$ ``$\lm$-isometric''
means ``isometric'', as defined in \S\ref{sec:2} (Definitions
\ref{defn:2.1} and \ref{defn:2.6}).

\begin{comment*}
We already studied surjective $\lm$-isometric maps in
\S\ref{sec:1}, cf. Definition \ref{defn:1.10}. If, in the notation
there, $(q',b')$ is a lifting of $(q,b)$ with respect to $\lm:R'\to
R$ and $\vrp:V'\to V,$ then ~$\vrp$ is $\lm$-isometric with
respect to $(q',b')$ and $(q,b)$. The maps $\vrp$ there are the
same objects as the present surjective $\lm$-isometric maps. Note that in the present setting the role of $R',V'$ and $R,V$ has
been interchanged.
We begun with a homomorphism $\lm: R\to R'$
instead of $\lm:R'\to R.$
\end{comment*}
\begin{examp}\label{examp:5.2}
If $(q,b)$ is an expansive quadratic pair on an $R$-module $V,$
then the $\lm$-linear map
$$\rho_V^\lm:V\To V_\lm=V_{R'}$$
introduced in \S\ref{sec:4} (Notation \ref{notation:4.6}) is
$\lm$-isometric with respect to $(q,b)$ and its scalar extension is
$(q_{R'},b_{R'}).$
\end{examp}

\begin{lem}\label{lem:5.3}
Let $\vrp: V\to W'$ be a $\lm$-linear map. Assume that $(q,b)$ and
$(q',b')$ are quadratic pairs on $V$ and $W'$, respectively.
Assume further that $(\veps_i\ds|i\in I)$ is a system of
generators of the $R$-module $V,$ and put
$\veps_i':=\vrp(\veps_i)$, $i\in I$. Assume finally that the
conditions \eqref{eq:5.1} and~\eqref{eq:5.2} above hold for
$x,y$ in the family $(\veps_i\ds|i \in I),$ i.e.,
\begin{equation}\label{eq:5.3}
q'(\veps_i')=\lm(q(\veps_i)),
\end{equation}
\begin{equation}\label{eq:5.4}
b'(\veps_i',\veps_j')=\lm(b(\veps_i,\veps_j))
\end{equation}
for all $i,j\in I.$ Then $\vrp$ is $\lm$-isometric with respect to
$(q,b)$ and $(q',b').$
\end{lem}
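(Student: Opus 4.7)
My plan is to reduce the claim to the explicit polynomial formulas \eqref{eq:1.9} and \eqref{eq:1.10} for $q$ and $b$ in terms of the generators $(\veps_i \ds | i \in I)$, and then to apply $\lambda$ term-by-term, using the hypotheses \eqref{eq:5.3}, \eqref{eq:5.4} and the fact that $\lambda$ is a semiring homomorphism. Since any single $x \in V$ (or pair $x,y$) involves only finitely many nonzero coordinates, the infinite case reduces immediately to the finite case, so I would assume from the outset that $I = \{1, \dots, n\}$ after fixing a total ordering on $I$.

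Step 1: Write $\al_i := q(\veps_i)$, $\bt_{ij} := b(\veps_i, \veps_j)$, $\al_i' := q'(\veps_i')$, $\bt_{ij}' := b'(\veps_i', \veps_j')$. By the hypotheses \eqref{eq:5.3}, \eqref{eq:5.4}, we have $\al_i' = \lm(\al_i)$ and $\bt_{ij}' = \lm(\bt_{ij})$ for all $i, j$.

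Step 2: Given $x = \sum_i x_i \veps_i \in V$, use $\lm$-linearity of $\vrp$ to write $\vrp(x) = \sum_i \lm(x_i) \veps_i'$. Apply \eqref{eq:1.9} twice: once to $q(x)$ expressed via $(\al_i, \bt_{ij})$ and once to $q'(\vrp(x))$ expressed via $(\al_i', \bt_{ij}')$ with coefficients $\lm(x_i)$. Then apply $\lm$ to $q(x)$; since $\lm$ preserves sums and products, we obtain
\[
\lm(q(x)) = \sum_i \lm(\al_i)\lm(x_i)^2 + \sum_{i<j} \lm(\bt_{ij}) \lm(x_i) \lm(x_j),
\]
which coincides with $q'(\vrp(x))$ by Step 1. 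This gives \eqref{eq:5.1}.

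Step 3: For the companion, do the analogous computation using \eqref{eq:1.10}. With $y = \sum_j y_j \veps_j$ and $\vrp(y) = \sum_j \lm(y_j) \veps_j'$, apply $\lm$ to $b(x,y) = \sum_{i,j} \bt_{ij} x_i y_j$ and compare with $b'(\vrp(x), \vrp(y)) = \sum_{i,j} \bt_{ij}' \lm(x_i)\lm(y_j)$; equality follows coefficientwise from Step 1. This gives \eqref{eq:5.2}.

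There is no real obstacle here: the argument is a direct verification using the multiplicativity and additivity of $\lm$, together with the fact that the polynomial expressions \eqref{eq:1.9}, \eqref{eq:1.10} are valid for arbitrary systems of generators (not only bases). The only mild subtlety is that these formulas require a finite index set for a direct sum expansion, which is handled by restricting to the finite support of the chosen presentations of $x$ and $y$.
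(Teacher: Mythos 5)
Your proposal is correct and follows essentially the same route as the paper's own proof: express $q(x)$ and $q'(\vrp(x))$ (and likewise $b(x,y)$ and $b'(\vrp(x),\vrp(y))$) via the polynomial formulas \eqref{eq:1.9} and \eqref{eq:1.10} in the generators, then apply $\lm$ term by term and invoke \eqref{eq:5.3} and \eqref{eq:5.4}. The reduction to finite support is the same implicit step the paper makes, so there is nothing to add.
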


\begin{proof}
Given $x,y\in V$ we choose presentations
$$x=\sum_{i\in I}x_i\veps_i,\qquad y=\sum_{i\in I}y_i\veps_i$$
with $x_i,y_i\in R.$ Then
$$\vrp(x)=\sum_{i\in I}\lm(x_i)\veps_i',\qquad \vrp(y)=\sum_{i\in I}\lm(y_i)\veps_i'.$$
We  have (cf. \eqref{eq:1.9})
$$q(x)=\sum_{i\in I}x_i^2q(\veps_i)+\sum_{i<j} x_ix_jb(\veps_i,\veps_j),$$
$$q'(\vrp(x))=\sum_{i\in I}\lm(x_i)^2q'(\veps_i')+\sum_{i<j}\lm(x_i)\lm(x_j)
b'(\veps_i',\veps_j').$$

Applying $\lm$ to the first equation and using \eqref{eq:5.3} and
\eqref{eq:5.4}, we obtain
$$\lm(q(x))=q'(\vrp(x)).$$
We further have
$$b(x,y)=\sum_{i,j\in I} x_iy_jb(\veps_i,\veps_j),$$
$$b'(\vrp(x),\vrp(y))=\sum_{i,j\in I}\lm(x_i)\lm(y_j)b'(\veps_i',\veps_j').$$
By use of \eqref{eq:5.4} we obtain that
$$\lm(b(x,y))=b'(\vrp(x),\vrp(y)),$$
as desired.
\end{proof}

We are ready to characterize the $\lm$-extension of an expansive
quadratic pair (Definition~\ref{defn:4.15}) by a universal
property.

\begin{thm}\label{thm:5.4}
Assume that $(q,b)$ is an expansive quadratic pair on an
$R$-module $V.$ Assume further that $(q',b')$ is  a quadratic pair
on an $R'$-module $W'$ and that $\vrp:V\to W'$ is a
$\lm$-isometric map with respect to $(q,b)$ and $(q',b').$ \{In
particular, $\vrp$ is $\lm$-linear.\} Then the unique $R'$-linear
map $\psi: V_{R'}\to W'$ with $\vrp=\psi\circ \rho$ (where
$\rho:=\rho_V^\lm)$ is isometric with respect to $(q_{R'},b_{R'})$
and $(q',b').$
\end{thm}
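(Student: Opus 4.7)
The plan is to reduce the claim to verifying the isometry conditions on a system of generators of $V_{R'}$, and then invoke \lemref{lem:5.3} with the identity homomorphism on $R'$.

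First, I would choose a system of generators $(\veps_i \ds |i\in I)$ of the $R$-module $V$ and set $\veps_i' := \rho(\veps_i)$, where $\rho = \rho^\lm_V$. By \thmref{thm:4.5}, the family $(\veps_i' \ds |i\in I)$ is a system of generators of the $R'$-module $V_{R'}$. Since $\vrp = \psi \circ \rho$, we have $\psi(\veps_i') = \vrp(\veps_i)$ for every $i\in I$.

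Second, I would combine two facts to verify the isometry equations on these generators. On the one hand, \scholref{scol:4.16} (Scholium~\ref{scol:4.16}) gives
\[
q_{R'}(\veps_i') = \lm(q(\veps_i)), \qquad b_{R'}(\veps_i',\veps_j') = \lm(b(\veps_i,\veps_j)).
\]
On the other hand, the hypothesis that $\vrp$ is $\lm$-isometric with respect to $(q,b)$ and $(q',b')$ yields
\[
q'(\vrp(\veps_i)) = \lm(q(\veps_i)), \qquad b'(\vrp(\veps_i),\vrp(\veps_j)) = \lm(b(\veps_i,\veps_j)).
\]
Comparing, and using $\psi(\veps_i') = \vrp(\veps_i)$, we deduce
\[
q'(\psi(\veps_i')) = q_{R'}(\veps_i'), \qquad b'(\psi(\veps_i'),\psi(\veps_j')) = b_{R'}(\veps_i',\veps_j')
\]
for all $i,j \in I$.

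Third, I would apply \lemref{lem:5.3}, with $\lm$ there taken to be $\id_{R'}: R' \to R'$, to the $R'$-linear map $\psi: V_{R'} \to W'$ and the quadratic pairs $(q_{R'},b_{R'})$ on $V_{R'}$ and $(q',b')$ on $W'$. The conditions \eqref{eq:5.3} and \eqref{eq:5.4} are just the two displayed equalities above, now verified on the system of generators $(\veps_i')_{i\in I}$ of $V_{R'}$. The lemma therefore yields $q' \circ \psi = q_{R'}$ and $b' \circ (\psi \times \psi) = b_{R'}$, which is exactly the desired isometry of $\psi$ with respect to $(q_{R'},b_{R'})$ and $(q',b')$.

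There is essentially no obstacle: the universal property of the scalar extension (Definition~\ref{defn:4.1} and \thmref{thm:4.5}) already supplies and controls $\psi$; \scholref{scol:4.16} encodes $(q_{R'},b_{R'})$ on generators; and \lemref{lem:5.3} propagates the isometry identities from generators to all of $V_{R'}$. The only point that requires care is that \lemref{lem:5.3} must be invoked with the trivial homomorphism $\id_{R'}$ (since $\psi$ is $R'$-linear, not $\lm$-linear), which is legitimate because the lemma is stated for an arbitrary semiring homomorphism.
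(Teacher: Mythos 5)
Your proposal is correct and follows essentially the same route as the paper's own proof: both verify the equalities $q_{R'}(\veps_i')=\lm(q(\veps_i))=q'(\psi(\veps_i'))$ and $b_{R'}(\veps_i',\veps_j')=\lm(b(\veps_i,\veps_j))=b'(\psi(\veps_i'),\psi(\veps_j'))$ on the generating system $(\veps_i'=\rho(\veps_i))$ of $V_{R'}$ and then conclude via Lemma~\ref{lem:5.3}. Your explicit remark that the lemma is invoked with $\id_{R'}$ in the role of $\lm$ is a point the paper leaves implicit, but it is the same argument.
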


\begin{proof}
We know from above that $\rho$ is isometric with respect to
$(q,b)$ and $(q',b').$ We choose a system of generators
$(\veps_i\ds|i\in I)$ of $V$ and then also have  the system
$(\veps_i'\ds|i\in I)$ of generators  of $V_{R'}$ with
$\veps_i':=\rho(\veps_i)$. It follows that
$$q_{R'}(\veps_i')=\lm(q(\veps_i))=q'(\vrp(\veps_i))=q'(\psi(\veps_i'))$$
for all
$i,j\in I$, and
$$b_{R'}(\veps_i',\veps_j')=\lm b(\veps_i,\veps_j)=b'(\vrp(\veps_i),\vrp(\veps_j))=b'(\psi(\veps_i'),\psi
(\veps_j')).$$ Thus, by \lemref{lem:5.3},
$\psi:V_{R'}\to W$ is isometric with respect to $(q_{R'},b_{R'})$
and~ $(q',b').$
\end{proof}

In a similar way we characterize the $\lm$-scalar extension of a
quadratic form on an $R$-module (instead of a quadratic pair) by a
universal property.

\begin{thm}\label{thm:5.5}
Assume that $q$ is a quadratic form on an $R$-module $V$ and $q'$
is an expansive quadratic form on an $R$-module $W'.$ Assume
further that $\vrp: V\to W'$ is a $\lm$-linear map which is
$\lm$-isometric with respect to $q$ and $q'.$ Let
$\psi:V_{R'}\to W'$ be the unique $R'$-linear map with
$\psi\circ\rho_V^\lm=\vrp.$ Then $q$ is expansive and $\psi$ is
isometric with respect to $q_{R'}$ and $q'.$
\end{thm}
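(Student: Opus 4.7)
The plan is to mirror the proof of Theorem~\ref{thm:5.4}, but with the preliminary task of exhibiting an expansion of $q$. Choose first an expansion $B': W' \times W' \to R'$ of $q'$; this gives the balanced companion $b' := B' + (B')^\ont$ and makes $(q',b')$ an expansive balanced quadratic pair on $W'$.

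The crux is to produce an $R$-bilinear form $B: V \times V \to R$ with $q(x) = B(x,x)$. I would approach this by fixing a system of generators $(\veps_i \ds|i\in I)$ of $V$, writing $\veps_i' := \vrp(\veps_i)$, and transferring data via the identities $\lm(q(\veps_i)) = q'(\veps_i') = B'(\veps_i',\veps_i')$. Choosing any companion $b$ of $q$ and applying Proposition~\ref{prop:1.1} on each two-generator piece, one obtains local bilinear forms expanding $q$ on the free quotients above $R\veps_i + R\veps_j$, and Proposition~\ref{prop:2.5} would then assemble these into a global expansion $B$ on $V$ once pairwise consistency is verified. The main obstacle is exactly this consistency check: submodules of $V$ generated by two elements need not be free, so the local bilinear forms may fail to descend. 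To control this I would exploit the pullback $B' \circ (\psi \times \psi)$ on $V_{R'}$, an $R'$-bilinear expansion of $q' \circ \psi$ that is globally well-defined on $V_{R'}$; the universal property of $\rho_V^\lm$ together with $\psi \circ \rho_V^\lm = \vrp$ should make the local choices for $B$ compatible with the restrictions of $B' \circ (\psi \times \psi)$ to $\rho(V)$, forcing the required descent.

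Once $B$ is in hand, set $b := B + B^\ont$, so that $(q,b)$ is a balanced expansive quadratic pair on $V$. A direct verification on generators, of the type used in Lemma~\ref{lem:5.3}, shows that $\vrp$ is then $\lm$-isometric with respect to the pairs $(q,b)$ and $(q',b')$: on generators we have $q'(\veps_i') = \lm(q(\veps_i))$ by hypothesis, while the matching $b'(\veps_i',\veps_j') = \lm(b(\veps_i,\veps_j))$ follows from the construction of $B$ out of $B'$. Theorem~\ref{thm:5.4} applied to this enhanced pair data yields that $\psi$ is isometric with respect to $(q_{R'},b_{R'})$ and $(q',b')$, and forgetting the bilinear companion delivers the desired isometry of $\psi$ with respect to $q_{R'}$ and $q'$. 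The hard part is thus entirely the construction of $B$; the isometry of $\psi$ reduces to Theorem~\ref{thm:5.4} once expansiveness is secured.
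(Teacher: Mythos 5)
Your overall skeleton agrees with the paper's: choose an expansion $B'$ of $q'$, set $b':=B'+(B')^\ont$, manufacture a companion $b$ and an expansion $B$ of $q$ on $V$, observe that $\vrp$ is then $\lm$-isometric with respect to the pairs $(q,b)$ and $(q',b')$, and finish by Theorem~\ref{thm:5.4}. The endgame of your argument is exactly the paper's. The difference --- and the problem --- lies entirely in how you propose to produce $B$: the paper simply takes the pullbacks $B:=B'\circ(\vrp\times\vrp)$ and $b:=b'\circ(\vrp\times\vrp)$ and declares $(q,b)$ a quadratic pair with expansion $B$; there are no generators and no patching.

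Your local-to-global construction has a genuine gap at its crux, and your own hedging (``should make the local choices compatible, forcing the required descent'') signals it. Concretely: first, Proposition~\ref{prop:1.1} produces expansions only on \emph{free} modules, and the submodules $R\veps_i+R\veps_j$ of $V$ need not be free, so what you obtain are bilinear forms on free covers of these submodules, not on the submodules themselves --- descending them is precisely the unsolved problem. Second, Proposition~\ref{prop:2.5} is a verification device (given a bilinear form $B$ already defined on all of $V$, it checks expansiveness by testing pairs of generators); it is not a gluing device and cannot ``assemble'' local data into a global $B$ on a non-free $V$, where well-definedness across different presentations $\sum_i x_i\veps_i$ is exactly what is at stake. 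Third, the consistency you hope to extract from $B'\circ(\psi\times\psi)$ only controls the images $\lm(B(x,y))$ in $R'$, since everything transported through the $\lm$-linear maps $\vrp$ and $\rho_V^\lm$ lives in $R'$; as $\lm$ is assumed neither injective nor surjective, this neither determines the $R$-valued coefficients of $B$ nor guarantees that any choice of preimages is additive and $R$-homogeneous. So the central step of your proof is not established. (You have in effect put your finger on the one delicate point of the theorem --- an honest expansion of $q$ must be produced on a possibly non-free $V$ from data that, after transport, is only $R'$-valued --- but the route you sketch does not produce it; the paper's intended route is the direct pullback of $B'$ and $b'$ along $\vrp$, followed, as in your last paragraph, by Theorem~\ref{thm:5.4}.)
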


\begin{proof}
We choose an expansion $B':W'\times W'\to R'$ of $q'.$ Let
$b':=B'+(B')^\ont.$ Then $(q',b')$ is a quadratic pair with
expansion $B'.$ We introduce the bilinear forms
$B:=B'\circ(\vrp\times\vrp)$ and $b:=b'\circ(\vrp\times\vrp)$ on
$V.$ Then $(q,b)$ is a quadratic pair on $V$ with expansion $B,$
and $\vrp$ is $\lm$-isometric with respect to $(q,b)$ and
$(q',b').$ It follows by \thmref{thm:5.4} that $\psi$ is isometric
with respect to $(q_{R'},b_{R'})$ and $(q',b')$. A fortiori $\psi$
is isometric with respect to $q_{R'}$ and $q'.$
\end{proof}

\begin{cor}\label{cor:5.6}
Let $\vrp: U\to V$ be an $R$-linear map. Assume that $(q_1,b_1)$
is a quadratic pair on $U$ and $(q,b)$ is an expansive quadratic
pair on $V.$
\begin{enumerate} \ealph \dispace
\item If $\vrp$ is isometric with respect to $q_1$ and $q,$
then $q_1$ is expansive and $\vrp_{R'}:U_{R'}\to V_{R'}$ is
isometric with respect to $(q_1)_{R'}$ and $q_{R'}.$
    \item If $\vrp$ is isometric with respect to $(q_1,b_1)$ and $(q,b)$, then $(q_1,b_1)$ is expansive and $\vrp_{R'}$ is isometric with respect to $((q_1)_{R'},(b_1)_{R'})$ and $(q_{R'},b_{R'}).$
        \end{enumerate}
        \end{cor}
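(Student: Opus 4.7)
The plan is to deduce both parts directly from the universal property characterizations established in Theorems \ref{thm:5.4} and \ref{thm:5.5}, by precomposing $\vrp: U \to V$ with the $\lm$-scalar extension map $\rho_V := \rho_V^\lm: V \to V_{R'}$. By Example \ref{examp:5.2}, $\rho_V$ is $\lm$-isometric with respect to $(q,b)$ and $(q_{R'},b_{R'})$. Composing, the $\lm$-linear map $\rho_V \circ \vrp: U \to V_{R'}$ is then $\lm$-isometric with respect to $(q_1,b_1)$ and $(q_{R'},b_{R'})$ in case (b) — since $q_{R'}(\rho_V\vrp(x)) = \lm(q(\vrp(x))) = \lm(q_1(x))$ and similarly for the companion — and analogously with respect to $q_1$ and $q_{R'}$ in case (a).

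Before invoking those theorems one must first verify that $(q_1,b_1)$ (respectively $q_1$) is expansive. For this, fix an expansion $B: V \times V \to R$ of $(q,b)$ and set $B_1 := B \circ (\vrp \times \vrp): U \times U \to R$. A direct computation gives
\[
B_1(x,x) = B(\vrp(x),\vrp(x)) = q(\vrp(x)) = q_1(x),
\]
and, writing $B^\ont(x,y) = B(y,x)$,
\[
(B_1 + B_1^\ont)(x,y) = (B+B^\ont)(\vrp(x),\vrp(y)) = b(\vrp(x),\vrp(y)) = b_1(x,y),
\]
where the last identity is used in part (b); in part (a) only the first calculation is needed. Thus $B_1$ is an expansion of $(q_1,b_1)$ (resp. of $q_1$), settling the expansiveness claim.

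Now apply Theorem \ref{thm:5.4} in case (b) to the $\lm$-isometric map $\rho_V \circ \vrp: U \to V_{R'}$: the unique $R'$-linear map $\psi: U_{R'} \to V_{R'}$ satisfying $\psi \circ \rho_U^\lm = \rho_V \circ \vrp$ is isometric with respect to $((q_1)_{R'},(b_1)_{R'})$ and $(q_{R'},b_{R'})$. On the other hand, Corollary \ref{cor:4.7} characterizes $\vrp_{R'}$ as the unique $R'$-linear map with exactly this defining property, so $\psi = \vrp_{R'}$. For case (a) the argument is identical, using Theorem \ref{thm:5.5} instead and noting that $q_{R'}$ is expansive because $B_{R'}$ is an expansion of it (Definition \ref{defn:4.15}).

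The proof is essentially a bookkeeping exercise: the only point requiring care is the identification of the universal map $\psi$ furnished by Theorem \ref{thm:5.4} or \ref{thm:5.5} with the scalar extension $\vrp_{R'}$, and the verification that the relevant expansiveness hypothesis has been established before the universal property is invoked. No new computation beyond the pullback of an expansion is needed.
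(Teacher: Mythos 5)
Your proposal is correct and follows essentially the same route as the paper: pull back an expansion of $(q,b)$ along $\vrp$ to get expansiveness of $(q_1,b_1)$, observe that $\rho_V\circ\vrp=\vrp_{R'}\circ\rho_U$ is $\lm$-isometric, and invoke the universal properties of Theorems \ref{thm:5.4} and \ref{thm:5.5} together with the uniqueness in \corref{cor:4.7} to identify the resulting map with $\vrp_{R'}$. The only difference is that you spell out the bookkeeping the paper leaves implicit.
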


        \begin{proof} (a): Let $B:V\times V\to R$ be an expansion of $q.$ Then $B\circ(\vrp\times\vrp)$ is an expansion of $q_1$ and $B_{R'}$ is an expansion of $q_{R'} $
        (essentially by construction of $q_{R'}).$ The $\lm$-linear map $\rho_V\circ\vrp$ is isometric with respect to $q_1$ and $q_{R'},$ and $\rho_V\circ \vrp= \vrp_{R'}\circ\rho_U$ (cf. \corref{cor:4.7}). We conclude by \thmref{thm:5.5} that $\vrp_{R'} $ is isometric with respect to $(q_1)_{R'}$ and $q_{R'}.$ \pSkip
        (b): An analogous, perhaps even easier argument using \thmref{thm:5.4}.
        \end{proof}

         \corref{cor:5.6} can also be deduced  from \propref{prop:4.13} in a straightforward way.
\pSkip
        It now makes sense to expand the terminology of subordinate quadratic forms and pairs from \S\ref{sec:3} (cf. Definition \ref{defn:3.3}) as follows.

        \begin{defn}\label{defn:5.7} $ $

        \begin{enumerate} \ealph  \dispace
\item Given quadratic forms $q_1 : U\to R$ and $q':V'\to R'$,
we say that $q_1$ is $\lm$-\bfem{subordinate to} $q,$ if there
exists a map $\vrp: U\to V'$ which is $\lm$-isometric with respect
to $q_1$ and $q'$ (in particular $\vrp$ is $\lm$-linear). We then
write $q\prec_\lm q'.$ More elaborately, we sometimes say,  that
$q_1$ is $\lm$-\bfem{subordinate to} $q$ \bfem{via} $\vrp$, and
write $q_1\prec_{\lm,\vrp}q'.$

    \item We use a completely analogous terminology for quadratic pairs $(q_1,b_1)$ on $U$ and $(q',b')$ on $V,$ and write $(q_1,b_1)\prec_\lm(q',b')$ or $(q_1,b_1)\prec_{\lm,\vrp}(q',b').$
        \end{enumerate}
        \end{defn}

        Our main reason for this terminology (and that in  Definition \ref{defn:3.3} which covers the case $\lm=\id_R)$ is that it allows to pose conveniently one of the basic general problems of quadratic forms over semirings.

        \begin{problem}\label{prob:5.8}
        Assume that $q_1$ is a quadratic form on an $R$-module $U$ and $q'$ is a quadratic form on an $R'$-module $V'.$ Determine whether $q_1\prec_\lm q'$ or not.
        \end{problem}

As a consequence of Example \ref{examp:5.2}, Theorems
\ref{thm:5.4} and \ref{thm:5.5}, and \corref{cor:5.6},  we state
some facts about $\lm$-subordinate quadratic forms and pairs,
which are now obvious.

\begin{schol}\label{schol:5.9}
$ $

\begin{enumerate} \ealph  \dispace
\item Assume that $q_1$ is a quadratic form over $R$ (i.e., on
some $R$-module), and $q'$ is a quadratic form over $R'.$ If
$q_1\prec_\lm q',$ and $q'$ is expansive, then $q_1$ is expansive
and $(q_1)_{R'}\prec q'.$ Conversely, if $q_1$ is expansive and
$(q_1)_{R'}\prec q'$, then $q_1\prec_\lm q'.$

    \item The same remains true if we replace $q_1$ and $q'$ by quadratic pairs $(q_1,b_1)$ and $(q',b')$ over $R$ and $R',$ respectively.

        \item If $q_1 $ and $q$ are expansive quadratic forms over $R$ with $q_1\prec q,$ then $(q_1)_{R'}\prec q_{R'}.$

            \item If $(q_1,b_1)$ and $(q,b)$ are expansive quadratic pairs over $R,$ and $(q_1,b_1)\prec (q,b),$ then $((q_1)_{R'},(b_1)_{R'})\prec(q_{R'},b_{R'}).$
                \end{enumerate}
                \end{schol}

\section{Pushing down quadratic pairs}\label{sec:6}

In all the following, $R$ and $R'$ are semirings and $\lm :R\to
R'$ is a \textit{surjective} semiring homomorphism.

Assume
that $V$ is an $R$-module, $V'$ an $R'$-module,  $\vrp : V\to
V'$  a \textit{surjective} $\lm $-linear map, and
 $(q,b)$  a quadratic pair on $V.$ Then we may ask whether
there exists a quadratic pair  $(q',b')$ on $V'$ such that
$\vrp $ is $\lm $-isometric with respect to $(q,b)$ and
$(q',b')$, i.e., for all $x,y\in V,$
\begin{equation}\label{eq:6.1}
q'\circ\vrp =\lm \circ q,\qquad
b'\circ(\vrp \times\vrp )=\lm \circ b.
\end{equation}
 Since $\vrp $ is surjective,  $(q',b')$ is then uniquely determined by $(q,b).$

 \begin{defn}\label{defn:6.1}
  When \eqref{eq:6.1} holds, we call $(q',b')$ a $\lm $-\bfem{pushdown} of the quadratic pair ~ $(q,b)$, more precisely, the $\lm $-\bfem{pushdown   of} $(q,b)$ \bfem{along}~$\vrp $, and say that  the surjective $\lm $-linear map~$\vrp $ is a $\lm $-\bfem{pushdowner} of the pair $(q,b).$ In the important case that $R=R'$ and $\lm =\id_R,$ we also say that $(q',b')$ is a \bfem{compression} of $(q,b)$ and that the $R$-linear map $\vrp $ is a  \bfem{compressor} of $(q,b).$
 \end{defn}

\begin{comment*}
We already encountered this situation  in \S\ref{sec:1}: $(q',b')$
is a $\lm $-pushdown of $(q,b)$ along~$\vrp $ iff $(q,b)$ is
a $\lm $-lift of $(q',b')$ via $\vrp $, cf. Definition
\ref{defn:1.10}. But the perspective has changed. While $(q',b')$
can have at most one $\lm $-pushdown along $\vrp ,$ the
$\lm $-lift $(q',b')$, if it exists, most often is not uniquely
determined by $\vrp .$
\end{comment*}

\begin{example}\label{examp:6.2}
Assume that $(q,b)$ is an expansive quadratic pair on $V.$ So we
have the scalar extension $(q_{R'},b_{R'})$ with respect to
$\lm: R \twoheadrightarrow R'$ at our disposal, and
$\rho=\rho_V^\lm :V\to V_{R'}$ is isometric with respect to
$(q,b)$ and $(q_{R'},b_{R'})$, cf. \S\ref{sec:5}. Since $\lm $
is surjective, the $\lm $-linear map~ $\rho$ is again
surjective, as follows from the fact that $\rho(V)$ generates
$V_{R'},$ cf. \thmref{thm:4.5}. Thus $(q_{R'},b_{R'})$ is a
$\lm $-pushdown along $\rho.$ If $\vrp : V\to V'$ is a
surjective $\lm $-linear map, then we have a unique
factorisation $\vrp =\psi\circ\rho$ with $\psi:V_{R'}\to V'$ a
surjective $R'$-linear map. If in addition $\vrp $ is isometric
with respect to $(q,b)$ and a quadratic pair $(q',b')$ on $V',$  we
learn from \thmref{thm:5.4} that $(q',b')$ is the compression of
$(q_{R'},b_{R'})$ along $\psi.$
\end{example}

Below in this section we shall  see that essentially the same
situation as in Example \ref{examp:6.2} occurs for
\textit{any} quadratic pair  $(q,b)$ on $V,$ not necessarily expansive.

\pSkip

For later use, we state an easy lemma concerning transitivity of
pushdowns.

\begin{lemma}\label{lem:6.3}
As before, $\lm :R\to R'$ is surjective and $\vrp :V\to V'$
is a surjective $\lm $-linear map. Assume that $\mu:R'\to R''$
is a second surjective semiring homomorphism and $\psi: V'\to T'$
is a surjective $\mu$-linear map. Let $(q,b)$ be a quadratic pair
on $V.$
\begin{enumerate} \ealph \dispace
\item If $(q,b)$ has a $\lm $-pushdown $(q',b')$ along
$\vrp $ and $(q',b')$ has a $\mu$-pushdown $(q'',b'')$ along~
$\psi,$ then $(q'',b'')$ is a $(\mu\lm) $-pushdown of  $(q,b)$ along
$\psi\vrp .$

    \item Assume that $(q,b)$ has a $\lm $-pushdown $(q',b')$ along $\vrp $ and a $(\mu\lm)$-pushdown $(q'',b'')$ along~$\psi.$ Then $(q'',b'')$ is a $\mu$-pushdown of $(q',b')$ along $\psi.$

        \item Assume that $\mu$ is also injective,  hence a semiring isomorphism.
        If $(q,b)$ has a $ (\mu\lm )$-pushdown $(q'',b'')$ along~$\psi\vrp $, then $(q,b)$ has a $\lm $-pushdown $(q',b')$ along $\vrp $, and $(q'',b'')$ is a $\mu$-pushdown of $(q',b')$ along $\psi.$
        \end{enumerate}
        \end{lemma}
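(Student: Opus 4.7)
My plan is to treat the three parts in the order given, since (a) is a mechanical chase of the defining identities and (b) and (c) successively use cancellation by surjectivity of $\varphi$ and by injectivity of $\mu$. Throughout, the underlying point is that $\psi\circ\varphi$ is a surjective $(\mu\lm)$-linear map (composition of surjective semilinear maps), so it is a legitimate candidate pushdowner.

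For (a), I would simply compose the two pushdown identities. From $q'\circ\varphi=\lm\circ q$ and $q''\circ\psi=\mu\circ q'$ I obtain
$$q''\circ(\psi\circ\varphi)=(q''\circ\psi)\circ\varphi=\mu\circ q'\circ\varphi=\mu\circ\lm\circ q=(\mu\lm)\circ q,$$
and the same two-step composition applied to $b'\circ(\varphi\times\varphi)=\lm\circ b$ and $b''\circ(\psi\times\psi)=\mu\circ b'$ gives $b''\circ((\psi\varphi)\times(\psi\varphi))=(\mu\lm)\circ b$. This says exactly that $(q'',b'')$ is the $(\mu\lm)$-pushdown of $(q,b)$ along $\psi\varphi$.

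For (b), the hypotheses give $q''\circ\psi\circ\varphi=(\mu\lm)\circ q=\mu\circ q'\circ\varphi$ and analogously for $b$. Since $\varphi$ is surjective, $q''\circ\psi=\mu\circ q'$ and $b''\circ(\psi\times\psi)=\mu\circ b'$ follow by right-cancellation of $\varphi$ (respectively $\varphi\times\varphi$, which is surjective because $\varphi$ is). Thus $(q'',b'')$ is the $\mu$-pushdown of $(q',b')$ along $\psi$.

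Part (c) is the delicate step and where the injectivity hypothesis on $\mu$ is needed. The strategy is to construct the intermediate pair $(q',b')$ on $V'$ by descent along $\varphi$. I define
$$q'(\varphi(x)):=\lm(q(x)),\qquad b'(\varphi(x_1),\varphi(x_2)):=\lm(b(x_1,x_2)).$$
For well-definedness, suppose $\varphi(x)=\varphi(x')$. Then $\psi\varphi(x)=\psi\varphi(x')$, so $q''(\psi\varphi(x))=q''(\psi\varphi(x'))$; by the $(\mu\lm)$-pushdown identity this becomes $\mu(\lm(q(x)))=\mu(\lm(q(x')))$, and injectivity of $\mu$ yields $\lm(q(x))=\lm(q(x'))$. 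The same argument, applied coordinatewise using the hypothesis $b''\circ((\psi\varphi)\times(\psi\varphi))=(\mu\lm)\circ b$, handles $b'$. Hence $q'\colon V'\to R'$ and $b'\colon V'\times V'\to R'$ are well defined set-maps.

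It remains to verify that $(q',b')$ is indeed a quadratic pair on the $R'$-module $V'$, and then that $(q'',b'')$ is the $\mu$-pushdown of $(q',b')$ along $\psi$. For the first, given $c'\in R'$ and $y\in V'$, I use surjectivity of $\lm$ and $\varphi$ to write $c'=\lm(c)$ and $y=\varphi(x)$; then $q'(c'y)=q'(\varphi(cx))=\lm(c^2q(x))=(c')^2q'(y)$, and $R'$-bilinearity of $b'$ together with symmetry is checked the same way. The companion identity $q'(y_1+y_2)=q'(y_1)+q'(y_2)+b'(y_1,y_2)$ is obtained by applying $\lm$ to the companion identity of $(q,b)$ at lifts $x_1,x_2$ of $y_1,y_2$. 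Finally, for $y=\varphi(x)\in V'$,
$$q''(\psi(y))=q''(\psi\varphi(x))=\mu(\lm(q(x)))=\mu(q'(\varphi(x)))=\mu(q'(y)),$$
and the analogous computation for $b''$ shows that $(q'',b'')$ is the $\mu$-pushdown of $(q',b')$ along $\psi$, as required. The main obstacle is the descent in (c); once $q'$ and $b'$ are constructed and shown well-defined, everything else is routine transport along surjections.
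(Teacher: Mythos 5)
Your proposal is correct and follows essentially the same route as the paper: (a) by composing the defining identities, (b) by cancelling the surjection $\vrp$, and (c) by descending $(q',b')$ along $\vrp$ via the well-definedness argument that uses injectivity of $\mu$ applied to the $(\mu\lm)$-pushdown identities. The only cosmetic difference is that at the end of (c) you verify the $\mu$-pushdown relation for $(q'',b'')$ directly, whereas the paper simply invokes part (b) once $(q',b')$ has been constructed.
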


        \begin{proof} (a): A straightforward check. \pSkip
        (b): From $q'\vrp =\lm  q,$ $b'(\vrp \times\vrp )=\lm  b,$ and $q''\psi\vrp =\mu\lm  q,$ $b''(\psi\times\psi)(\vrp \times\vrp )=\mu\lm  b,$ we conclude that
        $$q''\psi\vrp =\mu q'\vrp ,\qquad b''(\psi\times\psi)(\vrp \times\vrp )=\mu b'(\vrp \times\vrp ).$$
        Since $\vrp $ is surjective, this implies
        $$q''\psi=\mu q',\qquad b''(\psi\times\psi)=\mu b',$$
        as desired. \pSkip
        (c): We have $q''\psi\vrp =\mu\lm  q,$\, $b''(\psi\times\psi)(\vrp \times\vrp )=\mu\lm  b.$\newline
        Let $x_1,x_2,y_1,y_2\in V,$ and assume that $\vrp (x_1)=\vrp (x_2),$ $\vrp (y_1)=\vrp (y_2).$ Then
        $$\mu\lm  q(x_1)=q''\psi\vrp (x_1)=q''\psi\vrp (x_2)=\mu\lm  q(x_2).$$
        Since $\mu$ is injective, this implies that $\lm  q(x_1)=\lm  q(x_2)$. In the same vein, we obtain that $\lm  b(x_1,y_1)=\lm  b(x_2,y_2).$ Thus we have well-defined maps $$q':V'\To R', \qquad b':V'\times V'\To R'$$ with $q'(\vrp (x))=\lm  q(x)$ and $b'(\vrp (x),\vrp (y))=\lm  b(x,y) $ for all $x,y\in V.$ By a straightforward check we see that $(q',b')$ is a quadratic pair on $V',$ which by definition is a $\lm $-pushdown of $(q,b)$ along $\vrp .$ By part (b) it follows that $(q'',b'')$ is a $\mu$-pushdown of $(q',b').$
       \end{proof}

        We next construct $\lm $-pushdowns for any quadratic pair over $R$ by using appropriate equivalence relations. In a first step, we complete the theory of $\lm $-scalar extensions from \S\ref{sec:4} to the present case where $\lm $ is surjective.

\begin{defn}\label{defn:6.4}
As before, $\lm :R\to R'$ is surjective and $V$ is an
$R$-module. An equivalence relation $E$ on $V$ is called
\textbf{$\lm $-linear}, if $E$ is $R$-linear (cf. Definition
\ref{defn:4.3}) and
$$\lm (c_1)=\lm (c_2) \dss \Rightarrow c_1x\sim_Ec_2x$$
for all $c_1,c_2\in R$ and $x\in V.$ When this holds, we equip the
set $V/E$ with the structure of an $R'$-module by the rules
$(x,y\in E,\, c\in R)$
\begin{align*} [x]_E+[y]_E  :=[x+y]_E,\quad
\lm (c)\cdot[x]_E  :=[cx]_E. \end{align*}
\end{defn}

Note that this is the unique $R'$-module structure on the set
$V/E$ for which the map $\pi_E:V\twoheadrightarrow V/E$ is
$\lm $-linear.

\begin{remarks}\label{rem:6.5}
{}\quad

\begin{enumerate} \dispace \eroman
  \item If $\vrp :V\to V'$ is a $\lm $-linear map, then the
relation $E(\vrp )$ on the set $V$ (defined by
$x\sim_{E(\vrp )}y \Leftrightarrow
\vrp (x)=\vrp (y)$)
%\footnote{cf. Notations \ref{???} (following the introduction)})
is
$\lm $-linear and we have a well-defined $R'$-linear map
$$\overline\vrp :V/E(\vrp )\To V',$$  given for $x\in V$ by
\begin{equation}\label{eq:6.2}
\overline\vrp ([x]_{E(\vrp )}):=\vrp (x),
\end{equation}
 which means that
\begin{equation}\label{eq:6.3}
\vrp =\overline\vrp \circ\pi_{E(\vrp )}.
\end{equation}
If $\vrp $ is surjective, then $\overline\vrp $ is an
$R'$-linear isomorphism.

\item If $\vrp :V\to V'$,  $\psi: V\to V''$ are $\lm $-linear
maps and $\vrp $ is surjective, then $\psi$ factors through
$\vrp $ iff $E(\vrp )\subset E(\psi).$

\end{enumerate}
\end{remarks}

In all the following, $(q,b)$ is a quadratic pair on an $R$-module
$V.$

\begin{defn}\label{defn:6.6}
We call an equivalence relation $E$ on the set $V$ a
$\lm $-\bfem{pushdown equivalence} (abbreviated
$\lm $-\bfem{pd-equivalence}) \bfem{for} $q,b$, if $E$ is
$\lm $-linear and the surjective $\lm $-linear map
$\pi_E:V\twoheadrightarrow V/E$ pushes $(q,b)$ down to a quadratic
pair on $V/E$, which we denote by $(q/E,b/E).$ In the case that
$\lm =\id_R$, we also  say that $E$ is a
\bfem{pd-equivalence relation}, or a \bfem{compressive equivalence
relation for} $(q,b)$.
\end{defn}

\begin{remarks}\label{rem:6.7}
{}\quad
\begin{enumerate} \eroman \dispace
  \item  If $E$ is a $\lm $-pd-equivalence for $(q,b)$, then clearly
for any $x,y\in V$ the following holds:
\begin{equation}\label{eq:6.4}
(q/E)([x]_E)=\lm (q(x)),
\end{equation}
\begin{equation}\label{eq:6.5}
(b/E)([x]_E,[y]_E)=\lm (b(x,y)).
\end{equation}

\item If $\vrp :V\to V'$ is a surjective $\lm $-linear map and
$(q,b)$ has a $\lm $-pushdown $(q',b')$ along ~$\vrp $, then
it is clear from \eqref{eq:6.2}, \eqref{eq:6.3}, \eqref{eq:6.4},
\eqref{eq:6.5}, that $E(\vrp )$ is a $\lm $-pd-equivalence for
$(q,b)$, and that the $R'$-linear isomorphism
$\overline\vrp :V/E(\vrp )\to V'$ induced by $\vrp $ is
isometric with respect to $(q/E(\vrp ),b/E(\vrp ))$ and
$(q',b').$
\end{enumerate}

\end{remarks}

\begin{defn}\label{defn:6.8}
We define an equivalence relation $E:=E_\lm (q,b)$ on the set
$V$ by declaring for $x_1,x_2\in V$ that $x_1\sim_E x_2$ iff
$\lm (q(x_1))=\lm (q(x_2))$ and
$\lm (b(x_1,y))=\lm (b(x_2,y))$ for all $y\in V.$ We denote
the associated map $\pi_E:V\to V/E$ by $\pi_{q,b}^\lm .$ In the
  case that $\lm =\id_R$, we usually write $E(q,b)$
and $\pi_{q,b}$ instead of $E_\lm (q,b)$ and
$\pi_{q,b}^\lm .$
\end{defn}

\begin{thm}\label{thm:6.9}
$E:=E_\lm (q,b)$ is a $\lm $-pd-equivalence for $(q,b)$.
Thus the map $\pi_E$ is a surjective $\lm $-linear map from $V$
to the  $R'$-module $\overline V:=V/E$, which pushes $(q,b)$ down to
a quadratic pair $(\htq _\lm ,\htb _\lm ):=(q/E,b/E)$ on
$\overline V.$
\end{thm}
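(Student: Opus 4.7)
My strategy is a direct verification of the two conditions of Definition \ref{defn:6.6}: first that $E := E_\lm(q,b)$ is $\lm$-linear in the sense of Definition \ref{defn:6.4}, and second that the surjection $\pi_E : V \twoheadrightarrow V/E$ admits a (necessarily unique) pushdown quadratic pair on the quotient. Reflexivity, symmetry and transitivity of $E$ are transparent from its definition.

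First I would establish the $R$-linearity of $E$ (Definition \ref{defn:4.3}). Suppose $x_1 \sim_E x_2$, $y \in V$, $c \in R$. By the companion relation \eqref{eq:1.2}, $q(x_i+y) = q(x_i) + q(y) + b(x_i,y)$; applying $\lm$ and using the hypotheses $\lm(q(x_1)) = \lm(q(x_2))$ and $\lm(b(x_1,y)) = \lm(b(x_2,y))$ gives $\lm(q(x_1+y)) = \lm(q(x_2+y))$. Bilinearity of $b$ then yields $\lm(b(x_1+y,z)) = \lm(b(x_1,z)) + \lm(b(y,z)) = \lm(b(x_2+y,z))$ for every $z \in V$. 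For the scalar clause, \eqref{eq:1.1} and bilinearity give $\lm(q(cx_i)) = \lm(c)^2 \lm(q(x_i))$ and $\lm(b(cx_i,z)) = \lm(c)\lm(b(x_i,z))$; the right-hand sides involve $c$ only through $\lm(c)$, so $cx_1 \sim_E cx_2$ follows, and moreover $c_1 x \sim_E c_2 x$ whenever $\lm(c_1) = \lm(c_2)$, which is precisely the extra clause of Definition \ref{defn:6.4}.

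Next I would construct the pushed-down pair. Define $q' : V/E \to R'$ and $b' : V/E \times V/E \to R'$ by $q'([x]_E) := \lm(q(x))$ and $b'([x]_E,[y]_E) := \lm(b(x,y))$. Well-definedness of $q'$ is the first clause in the definition of $E$. For $b'$, invariance in the first argument is immediate from the definition of $E$; for the second argument I would invoke the symmetry of $b$ to reduce to the first-argument case. Symmetry of $b'$ is then inherited from that of $b$. Equipping $V/E$ with its canonical $R'$-module structure from Definition \ref{defn:6.4}, the identities \eqref{eq:1.1} and \eqref{eq:1.2}, read under $\lm$ and transported through $\pi_E$, show immediately that $q'$ is a quadratic form with companion $b'$. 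Thus $(q',b')$ is a quadratic pair on $V/E$ and the defining formulas give \eqref{eq:6.4}--\eqref{eq:6.5}, i.e., $\pi_E$ is $\lm$-isometric with respect to $(q,b)$ and $(q',b')$. Setting $(\htq_\lm,\htb_\lm) := (q',b') = (q/E,b/E)$ completes the proof.

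The only step that requires thought is the two-sided well-definedness of $b/E$, where one must use the symmetry of $b$ to propagate the one-sided invariance built into the definition of $E$ into invariance in both arguments. Everything else amounts to bookkeeping: the defining formulas \eqref{eq:1.1}, \eqref{eq:1.2} of a quadratic pair, applied under the homomorphism $\lm$, automatically transfer through the quotient $\pi_E$.
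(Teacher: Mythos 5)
Your proposal is correct and follows essentially the same route as the paper's proof: verify the $\lm$-linearity of $E_\lm(q,b)$ via the companion identity and bilinearity, establish two-sided well-definedness of $b/E$ through the symmetry of $b$, and then read off that $\pi_E$ is $\lm$-isometric from the defining formulas (using surjectivity of $\lm$ to verify the quadratic-pair axioms on $V/E$). No gaps.
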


\begin{proof} An easy check reveals that $x_1\sim_Ex_2$ implies  $cx_1\sim_Ecx_2$ for every $c\in R,$ and that $\lm (c_1)=\lm (c_2)$ implies  $c_1 x\sim_Ec_2x$ for every $x\in R.$ If $x_1\sim_Ex_2$, then for any $y,z\in V$ we have
\begin{align*} \lm  b(x_1+y,z) & =\lm  b(x_1,z)+\lm  b(y,z) \\ & =\lm  b(x_2,z)+\lm  b(y,z)
=\lm  b(x_2+y,z)\end{align*} and
\begin{align*} \lm  q(x_1+y) & =\lm  q(x_1)+\lm  b(x_1,y)+\lm
q(y) \\ & =\lm  q(x_2)+\lm  b(x_2,y)+\lm  q(y)  =\lm
q(x_2+y).\end{align*} Thus $x_1\sim_E x_2$ implies $x_1+y\sim_Ex_2+y$ for
any $y\in V.$ This proves that $E$ is $\lm $-linear (cf.
Definitions \ref{defn:4.3} and \ref{defn:6.4}). If $x_1\sim_E x_2$
and $y_1\sim_E y_2,$ then
$$\lm  b(x_1,y_1)=\lm  b(x_2,y_1)=\lm  b(y_1,x_2)=\lm
b(y_2,x_2)=\lm  b(x_2,y_2).$$ Thus we have a well-defined map
$\htb _\lm :V/E\times V/E\to R'$ given by the rule
\eqref{eq:6.5}. We also have a well-defined map $\htq_\lm :V/E\to R'$ given by the rule \eqref{eq:6.4}. Using the
fact that $\lm :R\to R'$ is surjective, it is  easily
verified that $\htb _\lm $ is a symmetric bilinear form and
$\htq _\lm $ is a quadratic form with companion $\htb_\lm .$ Formulas \eqref{eq:6.4} and \eqref{eq:6.5} say
that $\pi_E$ is $\lm $ isometric with respect to $(q,b)$ and
$(\htq _\lm ,\htb _\lm )$, i.e., $(\htq _\lm ,\htb_\lm )$ is a $\lm $-pushdown of $(q,b),$ along $\pi_E.$
\end{proof}

\begin{thm}\label{thm:6.10}
Assume that $\vrp :V\twoheadrightarrow V'$ is a surjective
$\lm $-linear map. The following are equivalent:
\begin{enumerate} \dispace \eroman
\item  $\vrp : V\twoheadrightarrow V'$ is a
$\lm $-pushdown of $(q,b)$,

\item $E(\vrp )\subset
E_\lm (q,b).$
\end{enumerate}
If (i) and  (ii) hold, then the $R'$-linear map $\psi: V'\to
V/E_\lm (q,b)$ induced by $\vrp  $ (i.e.,
$\pi_{q,b}^\lm =\psi\circ\vrp )$ compresses the
$\lm $-pushdown $(q',b')$ of $(q,b)$ along $\vrp $ to the
$\lm $-pushdown $(\htq _\lm ,\htb _\lm )$ of $(q,b)$
along~$\pi_{q,b}^\lm .$
\end{thm}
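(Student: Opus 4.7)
The plan is to prove the two implications of the equivalence separately, then deduce the compression statement from Lemma~\ref{lem:6.3}(b).

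The implication (i) $\Rightarrow$ (ii) is purely formal. Suppose $(q',b')$ is the $\lm$-pushdown of $(q,b)$ along $\vrp$, so $q' \circ \vrp = \lm \circ q$ and $b' \circ (\vrp \times \vrp) = \lm \circ b$. If $\vrp(x_1) = \vrp(x_2)$, substituting into these identities instantly yields $\lm(q(x_1)) = \lm(q(x_2))$ and, for every $y \in V$, $\lm(b(x_1,y)) = \lm(b(x_2,y))$. By Definition~\ref{defn:6.8}, $(x_1,x_2) \in E_\lm(q,b)$, which is (ii).

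For (ii) $\Rightarrow$ (i), my plan is to \emph{define} $q': V' \to R'$ and $b': V' \times V' \to R'$ by the rules $q'(\vrp(x)) := \lm(q(x))$ and $b'(\vrp(x),\vrp(y)) := \lm(b(x,y))$. Well-definedness is exactly the content of the inclusion $E(\vrp) \subset E_\lm(q,b)$ combined with Definition~\ref{defn:6.8}. It then remains to verify that $b'$ is $R'$-bilinear symmetric and that $q'$ is a quadratic form with companion $b'$; both follow by transferring the corresponding identities for $(q,b)$ through the surjectivity of $\vrp$ and $\lm$, exactly as in the proof of Theorem~\ref{thm:6.9}. By construction $\vrp$ is $\lm$-isometric with respect to $(q,b)$ and $(q',b')$, which is (i).

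For the last assertion, assume (i) and (ii). By Remark~\ref{rem:6.5}(ii), the inclusion $E(\vrp) \subset E_\lm(q,b)$ produces a unique $R'$-linear map $\psi: V' \to V/E_\lm(q,b)$ with $\pi_{q,b}^\lm = \psi \circ \vrp$. Now $(q,b)$ admits two pushdowns: $(q',b')$ along $\vrp$ (by hypothesis) and $(\htq_\lm,\htb_\lm)$ along $\psi\vrp = \pi_{q,b}^\lm$ (by Theorem~\ref{thm:6.9}). Lemma~\ref{lem:6.3}(b) applied with $\mu = \id_{R'}$, so that $\mu\lm = \lm$, directly concludes that $(\htq_\lm,\htb_\lm)$ is the $\id_{R'}$-pushdown, i.e., the compression, of $(q',b')$ along $\psi$.

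No step is genuinely hard. The only place demanding care is (ii) $\Rightarrow$ (i), where one must verify that the candidate $q'$ and $b'$ actually constitute a quadratic pair on $V'$; but this is the same verification carried out inside Theorem~\ref{thm:6.9}, so the main concern is simply to reference that argument rather than reproduce it.
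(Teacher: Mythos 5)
Your proposal is correct and follows essentially the same route as the paper: the paper proves (ii) $\Rightarrow$ (i) by factorizing $\pi_{q,b}^\lm = \psi\circ\vrp$ and invoking Lemma~\ref{lem:6.3}(c), whose own proof is exactly your direct construction of $(q',b')$ via well-definedness followed by an appeal to Lemma~\ref{lem:6.3}(b). You have merely inlined that lemma; the well-definedness check (including the use of the symmetry of $b$ for the second argument of $b'$), the verification that $(q',b')$ is a quadratic pair, and the compression statement all match the paper's argument.
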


\begin{proof}
(i)$\Rightarrow$(ii): Assume that $(q,b)$ has a $\lm $-pushdown $(q',b')$ along
$\vrp .$ Let $(x_1,x_2)\in E(\vrp ),$ i.e., $x_1,x_2\in V$ and
$\vrp (x_1)=\vrp (x_2).$ Then
$$\lm  q(x_1)=q'\vrp (x_1)=q'(x_2)=\lm q(x_2).$$
Likewise, for any $y\in V,$
$$\lm
b(x_1,y)=b'(\vrp (x_1),\vrp (y))=b'(\vrp (x_2),\vrp (y))=\lm
b(x_2,y).$$ Thus $(x_1,x_2)\in E_\lm (q,b).$ This proves that
$E(\vrp )\subset E_\lm (q,b).$ \pSkip
(ii)$\Rightarrow$(i): Assume that $E(\vrp )\subset E_\lm (q,b).$ Then
$\pi_{q,b}^\lm $ has a factorization
$\pi_{q,b}^\lm =\psi\circ\vrp $ with $$\psi:V'\To \overline
V:=V/E_\lm (q,b)$$ a surjective $R'$-linear map (cf. Remark
\ref{rem:6.5}.(ii)). We conclude by \lemref{lem:6.3}.(c) that $(q,b)$
has a $\lm $-pushdown $(q',b')$ along $\vrp $,  and that $(q',b')$
is compressed to $(\htq _\lm ,\htb _\lm )$ along $\psi$.
\end{proof}

\begin{cor}\label{cor:6.11}
{}\quad

\begin{enumerate} \dispace \ealph
\item  Every quadratic pair $(q,b)$ on $V$ has a
$\lm $-pushdown $(q_\lm ,b_\lm )$ along the
$\lm $-scalar extension $\rho=\rho^\lm _V:V\to V_\lm $ of
$V.$

\item If $(q,b)$ has a $\lm $-pushdown $(q',b')$ along a
given surjective $\lm $-linear map $\vrp:V \to V'$, then the $R'$-linear map
$\overline\vrp :V_\lm \to V'$  induced by $\vrp $ (i.e.,
$\vrp =\overline\vrp \circ\rho)$ compresses
$(q_\lm ,b_\lm )$ to $(q',b').$

\item In particular, the $R'$-linear map
$$\overline\pi_{q,b}^\lm : V_\lm \To V/E_\lm (q,b)$$
induced by $\pi_{q,b}^\lm :V\to V/E_\lm (q,b)$ compresses
$(q_\lm ,b_\lm )$ to the pair $(\htq _\lm ,\htb_\lm )$ appearing in Theorems \ref{thm:6.9} and~
\ref{thm:6.10}.
\end{enumerate}
\end{cor}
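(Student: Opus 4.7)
The plan is to treat parts (a), (b), (c) in sequence, using \thmref{thm:6.10} as the pushdown criterion and \lemref{lem:6.3}(b) for transitivity; the entire argument essentially reduces to two applications of the universal property of the $\lm$-scalar extension $\rho = \rho_V^\lm$.

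For (a), it suffices by \thmref{thm:6.10} to verify the inclusion $E(\rho) \subset E_\lm(q,b)$. The relation $E_\lm(q,b)$ is $\lm$-linear by \thmref{thm:6.9}, hence the quotient map $\pi_{q,b}^\lm : V \to V/E_\lm(q,b)$ is $\lm$-linear; the universal property of $\rho$ then produces a unique $R'$-linear map $\psi : V_\lm \to V/E_\lm(q,b)$ with $\psi \circ \rho = \pi_{q,b}^\lm$. Any $x_1, x_2 \in V$ with $\rho(x_1) = \rho(x_2)$ therefore satisfy $\pi_{q,b}^\lm(x_1) = \pi_{q,b}^\lm(x_2)$, so $(x_1, x_2) \in E_\lm(q,b)$, as required. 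We write $(q_\lm, b_\lm)$ for the $\lm$-pushdown of $(q,b)$ along $\rho$ then furnished by \thmref{thm:6.10}.

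For (b), let $\vrp : V \to V'$ be a surjective $\lm$-linear map with $\lm$-pushdown $(q',b')$. The universal property of $\rho$ gives a unique $R'$-linear $\overline{\vrp} : V_\lm \to V'$ with $\overline{\vrp} \circ \rho = \vrp$, and $\overline{\vrp}$ is surjective because $\vrp$ is. Now apply \lemref{lem:6.3}(b) with ``$\mu$'' taken to be $\id_{R'}$, reading the first $\lm$-pushdown as $\rho$ with its pushdown $(q_\lm, b_\lm)$ from part (a) and the composite $\overline{\vrp} \circ \rho = \vrp$ as the $(\id_{R'} \circ \lm)$-pushdown with pushdown $(q',b')$. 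The lemma's conclusion is precisely that $(q',b')$ is the $\id_{R'}$-pushdown---i.e.\ the compression---of $(q_\lm, b_\lm)$ along $\overline{\vrp}$. Part (c) is then the special case $\vrp = \pi_{q,b}^\lm$, whose $\lm$-pushdown is $(\htq_\lm, \htb_\lm)$ by \thmref{thm:6.9}. No genuine obstacle is anticipated; the only care required lies in bookkeeping the semiring homomorphism attached to each arrow (in particular, viewing the $\lm$-linear composite $\overline{\vrp} \circ \rho$ as a $(\id_{R'} \circ \lm)$-linear map) when invoking \lemref{lem:6.3}(b).
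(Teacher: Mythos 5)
Your proof is correct and follows essentially the same route as the paper: part (a) via the factorization $\pi_{q,b}^\lm=\psi\circ\rho$ giving $E(\rho)\subset E_\lm(q,b)$ and then Theorem \ref{thm:6.10}, and parts (b), (c) via the factorization through $\rho$ together with the transitivity Lemma \ref{lem:6.3}. The only cosmetic difference is that the paper invokes Lemma \ref{lem:6.3}(c) for part (b), whereas you use Lemma \ref{lem:6.3}(b); since part (a) already supplies the existence of the pushdown along $\rho$, your variant is equally valid.
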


\begin{proof}
(a): Since the map $\pi_{q,b}^\lm :V\twoheadrightarrow
V/E_\lm (q,b)$ is $\lm $-linear, we have a factorization
$\pi_{q,b}^\lm =\psi\circ\rho_V^\lm $ with
$\psi:V_\lm \to V/E_\lm (q,b)$ an $R'$-linear map. Thus
$$E(\rho_V^\lm )\subset E(\pi_{q,b}^\lm )=E_\lm (q,b).$$
By \thmref{thm:6.10} we conclude that $(q,b)$ has the
$\lm $-pushdown $(q_\lm ,b_\lm )$ along $\rho_V^\lm .$
\pSkip
(b): We have a factorization $\vrp =\psi\circ\rho_V^\lm $ with
$\psi:V_\lm \to V'$ a surjective $R'$-linear map. We conclude,
again by \lemref{lem:6.3}.(c), that $\psi$ compresses
$(q_\lm ,b_\lm )$ to $(q',b').$
\pSkip
(c): A special case of (b).
\end{proof}

\begin{defn}\label{defn:6.12}
We call $(q_\lm ,b_\lm )$ the $\lm $-\bfem{scalar
extension of the pair} $(q,b)$. When  it is clear from the context,
which homomorphism $\lm :R \twoheadrightarrow R'$ is under
consideration, we  write $(q_{R'},b_{R'})$ instead of
$(q_\lm ,b_\lm ).$
\end{defn}

\begin{remark}\label{rem:6.13}
Assuming  that the pair $(q,b)$ is expansive, we have already
obtained in \S4 a $\lm $-scalar extension
$(q_\lm ,b_\lm )_{\old}$ on $V_\lm $ (Definition
\ref{defn:4.15}). By \thmref{thm:5.4}, the unique map $\psi:
V_\lm \to V_\lm $ with
$\psi\circ\rho_V^\lm =\rho_V^\lm $ is isometric with respect
to $(q_\lm ,b_\lm )_{\old}$ and the defined
$\lm $-scalar extension $(q_\lm ,b_\lm )_{\new}.$
Since $\rho_V^\lm $ is surjective, $\psi$ is the identity of
$V_\lm $. We conclude that
$(q_\lm ,b_\lm )_{\new}=(q_\lm ,b_\lm )_{\old}.$
\end{remark}

Thus our present terminology coincides  with Definition
\ref{defn:4.15}. \pSkip

We next spell out much of the contents of Theorems
\ref{thm:6.9}, \ref{thm:6.10} and \corref{cor:6.11} in another,
more condensed, way and draw some consequences.

\begin{defn}\label{defn:6.14}
Let $\vrp _1:V\to V'_1$ and $\vrp _2:V\to V_2'$ be surjective
$\lm $-linear maps. We say that $\vrp _2$ \bfem{dominates}
$\vrp _1$ and write $\vrp _1\preceq \vrp _2,$ if there
exists a map $\alpha: V_1'\to V_2'$ for which
$\alpha\circ\vrp _1=\vrp _2$ (i.e., $\vrp _2$ factors
through $\vrp _1).$ This map $\vrp $ is
surjective and $R'$-linear. We call~ $\vrp _1$ and $\vrp _2$
\bfem{equivalent}, and write $\vrp_1\cong \vrp _2$,  if
$\vrp _1\preceq \vrp _2$ and $\vrp _2\preceq \vrp _1.$
Then $\vrp _2=\alpha\circ\vrp _1$ with $\alpha$ an
$R'$-linear isomorphism from $V_1'$ to $V_2'.$
\end{defn}

\begin{remark}\label{rem:6.15}
If  both $\vrp _1$ and $\vrp _2$ are
$\lm $-pushdowners of $(q,b)$ to quadratic pairs $(q_1',b_1')$
and $(q_2',b_2')$ and $\vrp _1\preceq \vrp _2$, then the
$R'$-linear surjective map $\alpha: V_1'\twoheadrightarrow V_2'$
is isometric with respect to $(q_1',b_1')$ and $(q_2',b_2')$ and
thus $\alpha$ compresses $(q_1',b_1')$ to $(q_2',b_2').$
\end{remark}

\begin{notations*}\label{notat:6.16}  $ $
\begin{enumerate} \dispace \ealph

\item  We denote the equivalence class of a $\lm $-linear
surjection $\vrp $ on $V$ by $[\vrp ]$ and the set of all
these classes by $\LIN_\lm (\vrp ).$ The set
$\LIN_\lm (\vrp )$ is partially ordered in the obvious
way,
$$[\vrp _1]\le[\vrp _2] \dss \Leftrightarrow
\vrp _1\preceq\vrp _2.$$

\item On the other hand, we have the poset\footnote{= ``partially
ordered set''} $\ELIN_\lm (V)$ consisting of all
$\lm $-linear equivalence relations on $V,$ ordered by
inclusion
$$E_1\le E_2 \dss \Leftrightarrow E_1\subset E_2.\ \footnote{Recall that
 we regard every equivalence relation on $V$ as a subset of
 $V\times V.$}
$$

\end{enumerate}
\end{notations*}

\begin{remark}\label{rem:6.17}
The poset $\ELIN_\lm (\vrp )$ is a complete lattice,
more precisely a complete sublattice of the lattice of all
equivalence relations on the set $V,$ as is easily seen.  We have
a poset isomorphism
\begin{equation}\label{eq:6.6}
E_V^\lm :\LIN_\lm (V)\Isoto
\ELIN_\lm (V),\qquad [\vrp ]\mapsto E(\vrp ),
\end{equation}
with inverse map $E\mapsto [\pi_E].$ Thus also
$\LIN_\lm (V)$ is a complete lattice. It has  the bottom
element $[\rho_V^\lm ]$ and the top element $[0]$, represented
by the zero map $V\to\{0\}.$
\end{remark}

\begin{notations*}\label{notations:6.18}
We denote the set of equivalence classes
$[\vrp ]\in\LIN_\lm (V)$ of $\lm $-pushdowners
$\vrp $ of $(q,b)$ by $\PD_\lm (q,b)$ and the set of
$\lm $-\text{pd}-equivalences for $(q,b)$ by
$\EPD_\lm (q,b),$  written respectively
$\PD(q,b)$ and $\EPD(q,b)$ in the case
$\lm =\id_R.$ We regard the sets $\PD_\lm (q,b)$ and
$\EPD_\lm (q,b)$ as subposets of $\LIN_\lm (V)$
and $\ELIN_\lm (U)$, respectively.
\end{notations*}

The following is now a direct consequence of Theorems
\ref{thm:6.9}, \ref{thm:6.10} and \corref{cor:6.11}.

\begin{schol}\label{schol:6.19}
As before $(q,b)$ is a quadratic pair on an $R$-module $V$ and
$\lm :R\twoheadrightarrow R'$ is a surjective semiring
homomorphism.
\begin{enumerate} \ealph \dispace

 \item $\EPD_\lm (q,b)$ is the set of all
 $\lm $-linear equivalence relations $E$ on $V$ with
 $$E(\rho_V^\lm )\subset E\subset E_\lm (q,b).$$
 Thus $\EPD_\lm (q,b)$ is a complete sublattice and a
 lower subset of $\ELIN_\lm (V)$ with bottom
 $E(\rho_V^\lm )$\footnote{the same for all $(q,b)$ on $V$!}
 and top $E_\lm (q,b).$

 \item We have an isomorphism of posets
 \begin{equation}\label{eq:6.7}
 \EPD_\lm (q,b) \Isoto \PD_\lm (q,b),\qquad
 E\mapsto [\pi_E],
 \end{equation}
 with inverse map $[\vrp ]\mapsto E(\vrp ).$

 \item We also have an isomorphism of posets
\begin{equation}\label{eq:6.8}
 \PD(q_\lm ,b_\lm ) \Isoto  \PD_\lm (q,b),\qquad
[\psi]\mapsto [\psi\circ\rho_V^\lm ].
 \end{equation}

\end{enumerate}
\end{schol}

\begin{defn}\label{defn:6.20} $ $
\begin{enumerate} \ealph \dispace
  \item Recall that in the case that $\lm = \id_R$ we denote the top element
$E_\lm (q,b)$ of the poset
$$\EPD(q,b):=\EPD_\lm (q,b)$$ by $E(q,b)$ and the map
$\pi_{E(q,b)}$ by $\pi_{q,b}.$ Consequently,  the particular
compression $(q/E(q,b),\, b/E(q,b))$ of $(q,b)$ along $\pi_{q,b}$ is called
the \bfem{terminal compression} of $(q,b)$ and is denoted by~
$(\htq ,\htb )$.

\item This notation is in harmony with the definition
$$(\htq _\lm ,\htb _\lm ):=(q/E_\lm (a,b),\,
b/E_\lm (a,b))$$ appearing above, since  $(\htq_\lm ,\htb _\lm )$ is indeed the terminal compression of the pair
$(q_\lm ,b_\lm )$ (\corref{cor:6.11}.(c)). We call $(\htq_\lm ,\htb _\lm )$ the \bfem{terminal}
$\lm $-\bfem{pushdown} of $(q,b),$ while
$(q_\lm ,b_\lm )$ is termed the \bfem{initial}
$\lm $-\bfem{pushdown} of $(q,b).$\footnote{The notation
$q_\lm $ is slightly abusive, since this quadratic form not only
depends on $q$ and $\lm $ but also on the companion $b$ of
$q.$}

\item If $(q,b)=(\htq ,\htb ),$ i.e., $E(q,b)$ is the diagonal of
$V\times V$, then we say that $(q,b)$ is \bfem{incompressible}.
\end{enumerate}

\end{defn}

\begin{prop}\label{prop:6.21}
Let $\mu: R'\to R''$ be a second surjective semiring homomorphism.
Assume that $(q',b')$ is the $\lm $-pushdown of $(q,b)$ along
some surjective $\lm $-linear map $\vrp : V\to V'.$ Then we
have a natural isometry
\begin{equation}\label{eq:6.9}
 (\htq'_\mu,\htb'_\mu)\cong( \htq_{\mu\lm },\htb_{\mu\lm }).
 \end{equation}
 \end{prop}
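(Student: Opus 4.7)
The plan is to exhibit an explicit $R''$-linear isomorphism $V/E_{\mu\lm}(q,b) \isoto V'/E_\mu(q',b')$ and check that it intertwines the two terminal pushdowns. Throughout, write $\pi := \pi_{q,b}^{\mu\lm} : V \twoheadrightarrow V/E_{\mu\lm}(q,b)$ and $\pi' := \pi_{q',b'}^\mu : V' \twoheadrightarrow V'/E_\mu(q',b')$ for the canonical terminal pushdowners provided by Theorem~\ref{thm:6.9}.

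First I would form the composite $\pi' \circ \vrp : V \to V'/E_\mu(q',b')$. Since $\vrp$ is a $\lm$-pushdowner of $(q,b)$ onto $(q',b')$ and $\pi'$ is a $\mu$-pushdowner of $(q',b')$ onto $(\htq'_\mu, \htb'_\mu)$, Lemma~\ref{lem:6.3}(a) tells us that $\pi' \circ \vrp$ is a $(\mu\lm)$-pushdowner of $(q,b)$ onto $(\htq'_\mu, \htb'_\mu)$. By the equivalence (i)$\Leftrightarrow$(ii) of Theorem~\ref{thm:6.10}, it follows that $E(\pi'\circ\vrp) \subset E_{\mu\lm}(q,b)$, which is already enough to produce a surjective $R''$-linear compression $\alpha : V/E_{\mu\lm}(q,b) \twoheadrightarrow V'/E_\mu(q',b')$ isometric with respect to $(\htq_{\mu\lm}, \htb_{\mu\lm})$ and $(\htq'_\mu, \htb'_\mu)$ (Scholium~\ref{schol:6.19} and Remark~\ref{rem:6.15}).

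The remaining and main point is to show that this inclusion is actually an equality, so that $\alpha$ is an isomorphism. Let $(x_1,x_2) \in E_{\mu\lm}(q,b)$. I want to check that $\pi'\vrp(x_1) = \pi'\vrp(x_2)$, i.e., that $(\vrp(x_1),\vrp(x_2)) \in E_\mu(q',b')$. Unwinding the definition of $E_\mu(q',b')$, this amounts to verifying
\[
\mu\bigl(q'(\vrp(x_1))\bigr) = \mu\bigl(q'(\vrp(x_2))\bigr), \qquad \mu\bigl(b'(\vrp(x_1), z')\bigr) = \mu\bigl(b'(\vrp(x_2), z')\bigr)
\]
for every $z' \in V'$. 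Using $q'\circ\vrp = \lm\circ q$ and $b'\circ(\vrp\times\vrp) = \lm\circ b$, together with the surjectivity of $\vrp$ (to write $z' = \vrp(y)$), these conditions translate exactly into $\mu\lm q(x_1) = \mu\lm q(x_2)$ and $\mu\lm b(x_1,y) = \mu\lm b(x_2,y)$ for all $y \in V$, which is the definition of $(x_1,x_2) \in E_{\mu\lm}(q,b)$. Hence $E(\pi'\vrp) = E_{\mu\lm}(q,b)$.

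By Remark~\ref{rem:6.5}(i), the map $\alpha$ induced by $\pi'\circ\vrp$ is therefore an $R''$-linear isomorphism, and by construction it is isometric with respect to $(\htq_{\mu\lm}, \htb_{\mu\lm})$ and $(\htq'_\mu, \htb'_\mu)$. The expected obstacle is only the bookkeeping in the last step: one must carefully use both the surjectivity of $\vrp$ (to range $z'$ over $V'$ via preimages $y \in V$) and the surjectivity of $\lm$ (implicit in the pushdown formulas) to match the two equivalence relations on the nose. The naturality claim in \eqref{eq:6.9} is automatic from the construction, since $\alpha$ is determined by the universal property of $\pi$.
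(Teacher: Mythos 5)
Your proof is correct, and its pivotal step coincides with the paper's: both use Lemma~\ref{lem:6.3}(a) to recognize $\pi_{q',b'}^{\mu}\circ\vrp$ as a $(\mu\lm)$-pushdowner of $(q,b)$ onto $(\htq'_\mu,\htb'_\mu)$. Where you part ways is in how the isomorphism is then extracted. The paper's proof is a one-liner: $(\htq'_\mu,\htb'_\mu)$, being a terminal compression, is \emph{incompressible} (Definition~\ref{defn:6.20}(c)), so the further compression of it onto the terminal $(\mu\lm)$-pushdown supplied by Theorem~\ref{thm:6.10} has trivial kernel relation and is an isomorphism. You instead verify by direct unwinding of Definition~\ref{defn:6.8} that $E(\pi_{q',b'}^{\mu}\circ\vrp)=E_{\mu\lm}(q,b)$ exactly, using $q'\circ\vrp=\lm\circ q$, $b'\circ(\vrp\times\vrp)=\lm\circ b$ and the surjectivity of $\vrp$ to range $z'$ over $V'=\vrp(V)$, and then invoke Remark~\ref{rem:6.5}(i). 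Both arguments are valid; the paper's is shorter and reuses the incompressibility formalism, while yours is self-contained and makes explicit exactly where surjectivity of $\vrp$ is needed. Two small remarks: your intermediate paragraph deriving only the containment $E(\pi'\circ\vrp)\subset E_{\mu\lm}(q,b)$ is subsumed by the two-sided computation that follows (the translation you carry out is an equivalence, not just one implication), so it can be dropped; and in that paragraph the compression induced by that containment actually goes $V'/E_\mu(q',b')\to V/E_{\mu\lm}(q,b)$ (any pushdown compresses \emph{onto} the terminal one, cf.\ Remark~\ref{rem:6.5}(ii)), not in the direction you wrote --- harmless here since you end up with an isomorphism, but worth fixing.
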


 \begin{proof}
 This follows from the fact that $(\htq _\mu',\htb _\mu')$ is a
 $(\mu\lm )$-pushdown of $(q,b)$, cf. \lemref{lem:6.3}.(a), and
 is incompressible.
 \end{proof}

\begin{defn}\label{def:6.20}
We call a quadratic pair $(q,b)$ on an $R$-module $V$, $R$ any semiring, \textbf{rigid}, if $q$
is rigid, and so $b$ is the unique companion of $q$.\end{defn}

Recall that the pair $(q,b)$ is balanced iff $b(x,x) = 2 q(x)$ for every $x \in V$. Also recall that, if ~$V$ happens to be free $R$-module, then every quadratic form on $V$ has a balanced companion.

Rigid balanced quadratic pairs behave very well under pushdowns.

\begin{thm}\label{thm:6.20}
Let $R'$ be a subsemiring  of a (commutative) ring $R''$. Assume that $\lm: R' \twoheadrightarrow R$ is a surjective semiring homomorphism from $R'$ to a second semiring $R$, and that $\vrp:V' \twoheadrightarrow V$ is a surjective $\lm$-linear map from an $R'$-module $V'$ to an $R$-module $V$.
\begin{enumerate} \ealph
  \item If $(q',b')$ is a quadratic pair on $V'$, which can be pushed down via $(\vrp,\lm)$ to a pair $(q,b)$ on $V$, then both $(q',b')$ and $(q,b)$ are balanced and rigid.
  \item If $(q,b)$ is a balanced and rigid quadratic pair on $V$, and the $R'$-module $V'$ is free, then $(q,b)$ can be lifted via $(\vrp,\lm)$ to a pair $(q',b')$ on $V'$, and any such lift $(q',b')$ is balanced and rigid.
\end{enumerate}
\end{thm}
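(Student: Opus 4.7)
The proof splits naturally into the two directions (a) and (b), both driven by the principle recorded in Example~\ref{examp:1.6}: since $R'$ embeds into the commutative ring $R''$, every quadratic form $q'$ on an $R'$-module has a \emph{unique} companion, given by $b'(x',y') = q'(x'+y') - q'(x') - q'(y')$ with the subtraction carried out in $R''$, and this unique companion is automatically balanced because $b'(x',x') = q'(2x') - 2q'(x') = 2q'(x')$.

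For part (a), my plan is as follows. Rigidity and balancedness of $(q',b')$ are immediate from Example~\ref{examp:1.6} applied to $q'\colon V' \to R'$. For $(q,b)$, balancedness is exactly Proposition~\ref{prop:1.9}, since the pushdown datum $(q',b')$ is, by Definition~\ref{defn:1.10}, the same as a lift of $(q,b)$ along $(\lambda,\varphi)$, and the hypotheses of Proposition~\ref{prop:1.9} are met. Rigidity of $(q,b)$ is the delicate step. I would take an arbitrary companion $b_1$ of $q$, write arbitrary $x,y \in V$ as $x = \varphi(x')$, $y = \varphi(y')$ using surjectivity of $\varphi$, and combine $q(x+y) = q(x)+q(y)+b_1(x,y)$ with the $\lambda$-image of $q'(x'+y') = q'(x')+q'(y')+b'(x',y')$. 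The goal is to compare $b_1(\varphi x',\varphi y')$ with $\lambda(b'(x',y')) = b(\varphi x',\varphi y')$ and conclude $b_1 \circ (\varphi\times\varphi) = b \circ (\varphi\times\varphi)$, whence $b_1 = b$ by surjectivity of $\varphi$.

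For part (b), existence of a lift is a direct application of Proposition~\ref{prop:1.8}(iii): because $V'$ is free and $(q,b)$ is balanced, a balanced lift $(q',b')$ on $V'$ exists. For the second assertion --- that \emph{any} such lift is balanced and rigid --- I apply Example~\ref{examp:1.6} once more to any such $(q',b')$ on the $R'$-module $V'$: the form $q'$ has a unique companion, which must then coincide with $b'$, and this unique companion is automatically balanced. Hence any lift is simultaneously rigid and balanced.

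The main obstacle is the rigidity of $(q,b)$ in part (a). The naive cancellation from $q(x)+q(y)+b_1(x,y) = q(x)+q(y)+b(x,y)$ to $b_1(x,y) = b(x,y)$ is not available inside an arbitrary semiring $R$, which need not be additively cancellative. A natural route around this is to try to lift the candidate pair $(q,b_1)$ back to $V'$, replacing $V'$ by a free cover if necessary, and then exploit the rigidity of the resulting form on the free module (via Proposition~\ref{prop:1.8} together with Example~\ref{examp:1.6}) to force the lift of $b_1$ to coincide with $b'$, so that $b_1 = b$ after pushing down via $\varphi$. The delicate point is controlling such lifts in the absence of a free structure on $V'$.
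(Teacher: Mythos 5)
Your proposal follows the paper's (very terse) proof almost verbatim wherever it is complete: rigidity and balancedness of $(q',b')$ from Example~\ref{examp:1.6}, balancedness of $(q,b)$ from Proposition~\ref{prop:1.9} via the lift/pushdown dictionary, existence of the lift in (b) from Proposition~\ref{prop:1.8}(iii), and rigidity and balancedness of any lift again from Example~\ref{examp:1.6}. All of these reductions are correct. The one clause you single out as delicate --- rigidity of the pushed-down pair $(q,b)$ --- is a genuine gap in your proposal, and you are right to distrust the cancellation step: neither your naive argument nor your proposed repair can be made to work, because the clause itself is false in general. Take $R'=\N$ inside $R''=\mathbb{Z}$, let $R=\BB$ be the Boolean semiring, $\lm:\N\to\BB$ the surjection with $\lm(n)=1$ for $n\ge 1$, and $V'=\N$, $V=\BB$, $\vrp=\lm$. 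The pair $q'=[1]$, $b'(x',y')=2x'y'$ pushes down to $q=[1]$, $b(x,y)=xy$ on $V=\BB$. This $(q,b)$ is balanced, as Proposition~\ref{prop:1.9} guarantees, but $q$ is not rigid: since $(a+b)^2=a^2+b^2$ in $\BB$, the form $[1]$ is quasilinear (Example~\ref{examp:1.4}), so $b_1=0$ is a second companion of $q$.

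This counterexample also explains why your repair strategy breaks down. The alternative companion $b_1$ of $q$ obstructing rigidity is necessarily non-balanced (by Proposition~\ref{prop:1.9} a liftable pair is balanced, and here $b_1(1,1)=0\ne 2q(1)$), so the pair $(q,b_1)$ admits no lift to any $R'$-module at all; and even for a liftable candidate, rigidity over $R'$ only pins down the companion of the \emph{lifted} form $q_1'$, which need not coincide with $q'$, so one cannot conclude $\lm\circ b_1'=\lm\circ b'$. For what it is worth, the paper's own one-line proof (``(a) is just a reformulation of Proposition~\ref{prop:1.9}'') does not cover this clause either: Proposition~\ref{prop:1.9} yields only balancedness of $b$, plus (via Example~\ref{examp:1.6}) rigidity and balancedness of $(q',b')$. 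The statement of part (a) should therefore be weakened to: $(q',b')$ is balanced and rigid, and $(q,b)$ is balanced --- unless one additionally assumes that $R$ itself embeds into a ring (e.g.\ via Lemma~\ref{lem:6.23} when $R$ is additively cancellative, as in Theorem~\ref{thm:6.24}), in which case Example~\ref{examp:1.6} applied over $R$ gives rigidity of $(q,b)$ directly.
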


\begin{proof}
  (a) is just a reformulation of Proposition \ref{prop:1.9}, and (b) is then clear from (a) and Proposition \ref{prop:1.8}.
\end{proof}
\begin{schol}\label{schol:6.22}
  In consequence, if the terminal $\lm$-pushdown of $(q,b)$ via $(\vrp,\lm)$ is rigid and balanced, the same holds for all pushdowns of $(q,b)$. Conversely, if a rigid and balanced pair on an $R$-module $V$ is given, we have a pletora  of free $R'$-modules $V'$ at  hands with surjective $\lm$-linear maps $V'\twoheadrightarrow V$. Indeed, if $V$ is finitely generates by vectors $\veps_1, \dots, \veps_N$, we may choose~ $V'$ as a polynomial semiring $R'[t_1, \dots, t_N]$. Otherwise we can choose a polynomial semiring with infinitely many variables.
\end{schol}

\begin{lem}\label{lem:6.23} If $R$ is a semiring, for which the monoid $(R,+)$ is cancellative (i.e., $\forall a,b,c \in R: a+c = b+c \Rightarrow a =b$), then there exists a ring, which contains $R$ as a subsemiring.
\end{lem}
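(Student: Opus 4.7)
The plan is to construct the ring $R''$ via the classical Grothendieck-style symmetrisation of the additive monoid $(R,+)$, equipped with a multiplication that extends the multiplication on $R$. Concretely, I would work on the Cartesian square $R\times R$, thinking of a pair $(a,b)$ as a formal difference $a-b$. Define the relation
\[
(a,b)\sim (c,d)\ \Longleftrightarrow\ a+d=b+c.
\]
Reflexivity and symmetry are immediate; for transitivity, if $a+d=b+c$ and $c+f=d+e$, then $a+d+f=b+c+f=b+d+e$, and here I invoke additive cancellativity to strip the $d$ and conclude $a+f=b+e$. So $\sim$ is a genuine equivalence relation on $R\times R$.

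Next I would equip $R'':=(R\times R)/{\sim}$ with operations
\[
[(a,b)]+[(c,d)]:=[(a+c,\,b+d)],\qquad [(a,b)]\cdot[(c,d)]:=[(ac+bd,\,ad+bc)].
\]
Well-definedness of addition is straightforward. For multiplication, the verification splits into two halves (change the first argument, then the second), each reducing to a computation of the form $(a+b')(c)+(b+a')(d)=(a'+b)(c)+(b'+a)(d)$ that, after expanding and using commutativity of $R$ together with $a+b'=b+a'$, cancels to an equality in $(R,+)$; cancellativity is again what allows us to pass from the expanded identity to the required equality of equivalence classes. One then checks associativity, commutativity, and distributivity of the two operations, the zero being $[(0,0)]$, the identity being $[(1,0)]$, and the additive inverse of $[(a,b)]$ being $[(b,a)]$, making $R''$ into a commutative ring.

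Finally I would define $\iota\colon R\to R''$ by $\iota(a):=[(a,0)]$. Direct computation shows $\iota$ preserves addition, multiplication, $0$, and $1$. Injectivity follows since $\iota(a)=\iota(b)$ means $(a,0)\sim(b,0)$, i.e.\ $a+0=b+0$, so $a=b$; here cancellativity is not even needed, though it is silently used in all prior steps. Hence $R$ embeds as a subsemiring of the ring $R''$, as required.

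The only genuinely delicate point is the well-definedness of multiplication: transitivity of $\sim$ and well-definedness of $+$ each use a single cancellation, but the compatibility of $\cdot$ with $\sim$ is where one must be careful to organise the computation so that every step reduces to an identity in $(R,+)$ that can legitimately be cancelled. Once that bookkeeping is handled, the remaining ring axioms are routine expansions.
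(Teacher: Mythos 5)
Your proposal is correct and is exactly the construction the paper has in mind: the paper's proof consists of the single remark that one may take $T=R\times R/\sim$ with $(a_1,b_1)\sim(a_2,b_2)\Leftrightarrow a_1+b_2=a_2+b_1$, i.e.\ the classical Grothendieck symmetrisation, whose details (transitivity via cancellation, well-definedness of the induced multiplication, and the embedding $a\mapsto[(a,0)]$) you have filled in accurately.
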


\begin{proof}
Very well known. We may take $T = R \times R / \sim$ with an equivalence relation, defined as
$(a_1,b_1) \sim (a_2, b_2) \Leftrightarrow a_1 + b_2 = a_2 + b_1$.
 \end{proof}

\begin{thm}\label{thm:6.24} Assume that $(q,b)$ is a rigid balanced quadratic pair on an $R$-module $V$ for $R$ a cancellative semiring, that $\lm: R_1 \to R$ is a semiring homomorphism, and that $\vrp: V_1 \to V$ is a $\lm$-linear map  from an $R_1$-module $V_1$ to $V$. Then every quadratic pair
$(q_1,b_1)$ on $V_1$, which is subordinate to $(q,b)$ via $(\vrp, \lm)$, is again rigid and balanced.
\end{thm}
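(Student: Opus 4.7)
The plan is to run, in reverse, the same mechanism behind Example \ref{examp:1.6} and Proposition \ref{prop:1.9}: the cancellativity of $R$, via Lemma \ref{lem:6.23}, provides an embedding $R \hookrightarrow T$ into a genuine commutative ring $T$, which restores subtraction downstairs and thereby forces uniqueness and balancedness of companions of $q$. The task is then to transfer this control across $\lm$ and $\vrp$ back up to $V_1$.

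For balancedness, combining \eqref{eq:1.1} (with $c=2$) and \eqref{eq:1.2} (with $y=x$) gives, inside $R_1$,
\[ 4 q_1(x) \;=\; 2 q_1(x) + b_1(x,x). \]
I would apply $\lm$, invoke the subordination relations $\lm \circ q_1 = q \circ \vrp$ and $\lm \circ b_1 = b \circ (\vrp \times \vrp)$ together with balancedness of $(q,b)$, and cancel the common summand $2\, q(\vrp(x))$ in $R$ (legitimate because $R$ is cancellative) to arrive at $\lm(b_1(x,x)) = \lm(2 q_1(x))$.

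For rigidity, let $b_1^*$ be any companion of $q_1$. Writing $q_1(x+y)$ in two ways using $b_1$ and $b_1^*$, applying $\lm$, and cancelling $\lm(q_1(x)) + \lm(q_1(y))$ in $R$ yields the identity $\lm \circ b_1^* = \lm \circ b_1$ on $V_1 \times V_1$.

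The crux, and the main obstacle, is the final step: lifting these $R$-level equalities back to $R_1$ to conclude $b_1(x,x) = 2 q_1(x)$ and $b_1^* = b_1$ in $R_1$ itself. Under injectivity of $\lm$ the argument closes immediately, for then $R_1$ is realised through $\lm$ as a subsemiring of the ring $T$, Example \ref{examp:1.6} applies directly to $(q_1,b_1)$, and the $R$-level equalities above are mere restatements of the desired $R_1$-level conclusions. In the absence of injectivity one needs an auxiliary cancellativity input on $R_1$ (or on the subsemiring of $R_1$ generated by the values of $q_1$ and $b_1$) to promote $\lm$-equalities into equalities in $R_1$; I expect this promotion step to be the principal technical point the proof will have to make explicit.
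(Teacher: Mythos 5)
Your two reductions are sound, and they are essentially the computations behind Example \ref{examp:1.6} and Proposition \ref{prop:1.9}: after embedding the cancellative semiring $R$ into a ring $T$ via Lemma \ref{lem:6.23}, applying $\lm$ to the quadratic-pair identities and cancelling in $R$ does give $\lm(b_1(x,x))=\lm(2q_1(x))$ and $\lm\circ b_1^{*}=\lm\circ b_1$ for any second companion $b_1^{*}$ of $q_1$. The obstacle you isolate at the end is the genuine gap, and you should know that the paper's own proof does not close it either. The paper restricts $(q,b)$ to $\vrp(V_1)$, observes that this restriction is balanced and rigid (Lemma \ref{lem:6.23} together with Example \ref{examp:1.6}), and then cites Theorem \ref{thm:6.20}(a) to transfer rigidity and balancedness back to $(q_1,b_1)$. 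But in Theorem \ref{thm:6.20}(a) the conclusion about the upstairs pair rests on the hypothesis that the \emph{upstairs} semiring is a subsemiring of a ring; in the present application that semiring is $R_1$, on which Theorem \ref{thm:6.24} imposes no condition at all (cancellativity is assumed only for the target $R$, and $\lm$ is not even assumed surjective). So the citation silently supplies exactly the ``promotion'' input you flag as missing.

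That this input cannot be dispensed with is shown by a counterexample. Take $R_1=\mathbb{B}\times\mathbb{N}$ (with $\mathbb{B}$ the Boolean semifield), $R=\mathbb{N}$, $\lm$ the projection onto the second factor, $V_1=R_1^2$, $V=\mathbb{N}^2$, $\vrp$ induced coordinatewise by $\lm$, and $q_1=q=[1,1]$ with the balanced companions $b_1(x,y)=2(x_1y_1+x_2y_2)$ and $b(x,y)=2(x_1y_1+x_2y_2)$. Then $(q,b)$ is rigid and balanced over the cancellative semiring $\mathbb{N}$ and $(q_1,b_1)\prec_{\lm,\vrp}(q,b)$; yet $b_1^{*}(x,y)=b_1(x,y)+(1,0)\cdot(x_1y_2+x_2y_1)$, where $(1,0)\in R_1$ is a nonzero element with $\lm((1,0))=0$, is a second, different companion of $q_1$: the extra term has zero $\mathbb{N}$-coordinate, while in the idempotent $\mathbb{B}$-coordinate it is dominated by $q_1(x+y)$ and hence absorbed. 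Thus $q_1$ is not rigid, and the statement needs an additional hypothesis --- e.g.\ that $R_1$ is cancellative, or that $\lm$ is injective --- under which your own outline (apply Example \ref{examp:1.6} to an embedding of $R_1$ into a ring) closes the proof at once. Stopping where you did was the right call.
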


\begin{proof}
  The restriction $(q|\vrp(V_1), b | \vrp(V_1))$ of $(q,b)$ to the submodule $\vrp(V_1)$ of $V$ is clearly balanced. It is also rigid, since $R$ embeds into a ring $T$ (Lemma \ref{lem:6.23}). We conclude, already by Example \ref{examp:1.9}, that $(q|\vrp(V_1), b | \vrp(V_1))$ is rigid. The map $\vrp$ is a $\lm$-isometry from $V_1$ onto $\vrp(V_1)$ with respect to $(q_1,b_1) $ and $(q|\vrp(V_1), b | \vrp(V_1))$. It follows by Theorem \ref{thm:6.20}.a that $(q_1, b_1)$ is rigid and balanced.
\end{proof}

The primary source of these considerations about rigid and balanced quadratic pairs has been Example \ref{examp:1.6}.

\end{document}

\bibitem{IRS} Z.~Izhakian, J.
Rhodes,  B. Steinberg.
\newblock  Representation theory of finite semigroups over semirings, {\em Journal of Algebra},
336(1):139--157, 2011.

\bibitem{IJdim}
Z.~Izhakian and  J.~ Rhodes.
\newblock C-Dependence and c-rank of posets and lattices.
\newblock Eprint arXiv:1110.3553, 2011.

\bibitem{IJmat}
Z.~Izhakian and  J.~ Rhodes.
\newblock New representations of matroids and generalizations,
\newblock  arXiv:1103.0503, 2011.

\bibitem{IJbool}
Z.~Izhakian and  J.~ Rhodes.
\newblock  Boolean representations of matroids and lattices.
\newblock  arXiv:1108.1473, 2011.

\bibitem{IR0}
Z.~Izhakian, L.~Rowen,
\newblock Supertropical algebra. {\em Advances in Math.}, {225}(4):2222--2286, 2010.